\numberwithin{equation}{section}
\numberwithin{figure}{section}
\theoremstyle{plain}
\newtheorem{thm}{\protect\theoremname}[section]
  \theoremstyle{plain}
  \newtheorem{prop}[thm]{\protect\propositionname}
  \theoremstyle{plain}
  \newtheorem{cor}[thm]{\protect\corollaryname}
  \theoremstyle{plain}
  \newtheorem{exa}[thm]{\protect\examplename}
  \theoremstyle{plain}
  \newtheorem{lem}[thm]{\protect\lemmaname}
  \theoremstyle{definition}
  \newtheorem{defn}[thm]{\protect\definitionname}
	\theoremstyle{remark}
  \newtheorem{rem}[thm]{\protect\remarkname}
  \providecommand{\corollaryname}{Corollary}
  \providecommand{\definitionname}{Definition}
  \providecommand{\lemmaname}{Lemma}
  \providecommand{\propositionname}{Proposition}
  \providecommand{\examplename}{Example}
  \providecommand{\theoremname}{Theorem}
	\providecommand{\remarkname}{Remark}
\DeclareMathOperator{\cp}{cap}
\DeclareMathOperator{\dist}{dist}
\begin{document}

\title{Boundary Values of Functions of Dirichlet Spaces $L^1_2$ on Capacitary Boundaries}

\author{V.~Gol'dshtein and A.~Ukhlov}

\begin{abstract}
We prove that any weakly differentiable function with square integrable gradient can be extended to a capacitary boundary of any simply connected plane domain   $\Omega\ne\mathbb R^2$ except a set of a conformal capacity zero. For locally connected at boundary points domains  the capacitary boundary coincides with the Euclidean one.  A concept of  a capacitary boundary was proposed by V.~Gol'dshtein and S.~K.~Vodop'yanov in 1978 for a study of boundary behavior of quasi-conformal homeomorphisms. We prove in details the main properties of the capacitary boundary.  An abstract version of the extension property for more general classes of plane domains is discussed also. 

\end{abstract}
\maketitle
\footnotetext{\textbf{Key words and phrases:} Sobolev Spaces, Conformal Mappings.} 
\footnotetext{\textbf{2000 Mathematics Subject Classification:} 46E35, 30C65, 30C85.}

\section{Introduction }

Let  $\Omega$ be a domain in $ \mathbb R^2$.  We consider a Dirichlet space (a uniform Sobolev space) $L^1_2(\Omega)$  of locally integrable functions with the square integrable weak gradient $\nabla u \in L_2(\Omega)$ equipped with the seminorm 
$$
\|u | L^1_2(\Omega)\|= \|\nabla u | L_2(\Omega)\|.
$$
 
The paper is devoted to study of the boundary behavior for functions $u \in  L^1_2(\Omega)$.  

By the standard definition functions of $L^1_2(\Omega)$ are defined only up to a set of measure zero, but they can be redefined quasieverywhere i.~e. up to a set of conformal capacity zero. Indeed, every function $u\in L^1_2(\Omega)$ has a unique quasicontinuous representation $\tilde{u}\in L^1_2(\Omega)$. A function $\tilde{u}$ is termed quasicontinuous if for any $\varepsilon >0$ there is an open  set $U_{\varepsilon}$ such that the conformal capacity of $U_{\varepsilon}$ is less then $\varepsilon$ and the function  $\tilde{u}$ is continuous on the set $\Omega\setminus U_{\varepsilon}$ (see, for example \cite{HKM,Maz}). The concept of quasicontinuity can be obviously extended to the closure $\overline{\Omega}$ of $\Omega$.

In this paper we deals with quasicontinuous representations of functions $u\in L^1_2(\Omega)$.

One of main results of the paper is:

\begin{thm} Let $\Omega\subset\mathbb R^2$, $\Omega\ne\mathbb R^2$, be a simply connected  domain which is locally
connected at any boundary point $x\in\partial\Omega$. Then for any function $u\in L_{2}^{1}(\Omega)$
there exists a quasicontinuous function $\widetilde{u}:\overline{\Omega} \to \mathbb R$ such that $\tilde{u}|_{\Omega}=u$. 
\end{thm}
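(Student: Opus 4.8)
The plan is to transfer everything to the unit disk $\mathbb{D}$ by a conformal mapping and to exploit the conformal invariance of the Dirichlet integral in dimension two. Since $\Omega\subsetneq\mathbb{R}^{2}$ is simply connected, the Riemann mapping theorem provides a conformal homeomorphism $\varphi\colon\mathbb{D}\to\Omega$. A conformal change of variables leaves the Dirichlet integral unchanged, so the composition operator $u\mapsto v:=u\circ\varphi$ is an isometry of the seminormed space $L^{1}_{2}(\Omega)$ onto $L^{1}_{2}(\mathbb{D})$, and it also preserves the conformal capacity of condensers lying in $\Omega$ (equivalently, the $2$-modulus of curve families). Hence it suffices to construct a quasicontinuous representative of $v$ on the \emph{closed} disk $\overline{\mathbb{D}}$ and to carry it back to $\overline{\Omega}$. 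Since $\Omega$ is locally connected at every boundary point, Carath\'{e}odory's theorem shows that $\varphi$ extends to a continuous surjection $\Phi\colon\overline{\mathbb{D}}\to\overline{\Omega}$; the precise role of the hypothesis --- through the identification of the capacitary boundary of $\Omega$ with its Euclidean boundary, which is part of the development preceding this theorem --- is that $\Phi$ is in fact a homeomorphism, so that $\Phi^{-1}\colon\overline{\Omega}\to\overline{\mathbb{D}}$ is a continuous extension of $\varphi^{-1}$.

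On the disk the representative is obtained in a standard way. Subtracting the mean value of $v$ over $\mathbb{D}$ (Poincar\'{e}'s inequality) we may assume $v\in W^{1}_{2}(\mathbb{D})$; as $\mathbb{D}$ is a smooth bounded domain, $v$ extends boundedly to a function of $W^{1}_{2}(\mathbb{R}^{2})$, and every function of $W^{1}_{2}(\mathbb{R}^{2})$ has a unique quasicontinuous representative on $\mathbb{R}^{2}$ (see \cite{HKM,Maz}). Restricting it to $\overline{\mathbb{D}}$ and adding the constant back, we obtain $\widetilde{v}\colon\overline{\mathbb{D}}\to\mathbb{R}$, quasicontinuous with respect to the conformal capacity, with $\widetilde{v}|_{\mathbb{D}}=v$.

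Finally set $\widetilde{u}:=\widetilde{v}\circ\Phi^{-1}\colon\overline{\Omega}\to\mathbb{R}$. Then $\widetilde{u}|_{\Omega}=v\circ\varphi^{-1}=u$. To verify quasicontinuity, fix $\varepsilon>0$ and choose an open $U\subset\overline{\mathbb{D}}$ of conformal capacity less than $\varepsilon$ outside which $\widetilde{v}$ is continuous; then $\widetilde{u}$ is continuous on $\overline{\Omega}\setminus\Phi(U)$ because $\Phi^{-1}$ is continuous, and everything comes down to enclosing $\Phi(U)$ in an open subset of $\overline{\Omega}$ of small conformal capacity. Inside $\Omega$ this is exactly the conformal invariance of capacity; near $\partial\Omega$ it is delicate, and I expect this to be the main obstacle of the whole argument: one must show that the boundary extension $\Phi$ does not inflate the conformal capacity, i.e.\ that it sends conformal-capacity-null subsets of $\overline{\mathbb{D}}$ to conformal-capacity-null subsets of $\overline{\Omega}$. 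This is precisely what the capacitary-boundary machinery of the earlier sections is built to supply, through modulus and length--area estimates for the curve families transported by $\varphi$ together with the geometric information carried by the local connectedness of $\partial\Omega$. Granting that estimate, the images $\Phi(U_{1/n})$ of a sequence of such exceptional sets exhibit $\widetilde{u}$ as quasicontinuous on $\overline{\Omega}$, and the proof is finished.
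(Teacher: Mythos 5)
Your opening moves coincide with the paper's: transfer to $\mathbb{D}$ by the Riemann map, use conformal invariance of the Dirichlet integral and of condenser capacity, exploit the fact that $\mathbb{D}$ is an $L^1_2$-extension domain to get a quasicontinuous representative there, and use local connectedness to identify the ideal boundary of $\Omega$ with $\partial\Omega$. But your final step diverges, and it contains exactly the gap you flag yourself. You build the trace on $\overline{\mathbb{D}}$ first and then push it forward by the boundary extension $\Phi$, so you are forced to prove that $\Phi$ carries conformal-capacity-null subsets of $\overline{\mathbb{D}}$ --- including subsets of $\partial\mathbb{D}$ --- to conformal-capacity-null subsets of $\overline{\Omega}$. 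That is not supplied by conformal invariance of condenser capacity, which only governs condensers whose plates lie in the open domain; it is a statement about the boundary distortion of conformal maps, which is notoriously delicate (compare the known pathologies in the distortion of harmonic measure and of Hausdorff dimension of boundary sets). Writing ``granting that estimate'' leaves the crux of the argument unproved, and the paper contains no such estimate.

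The paper's proof is organized precisely so that no capacity of a boundary set ever has to be measured. It establishes a \emph{strong Luzin capacitary property}: for each $\varepsilon>0$ there is an open set $U_\varepsilon\subset\Omega$, lying \emph{inside the open domain} where quasi-invariance of capacity under $\varphi$ is classical, with $\cp(U_\varepsilon)<\varepsilon$, such that $u$ is uniformly continuous on $\Omega\setminus U_\varepsilon$ with respect to the conformal capacitary metric $\rho$. A uniformly continuous function extends to the completion of $(\Omega\setminus U_\varepsilon,\rho)$ for free; letting $\varepsilon\to 0$ defines $\widetilde u$ quasieverywhere on the capacitary boundary $H_\rho$ (Theorem \ref{thm:Tietz}), and local connectedness identifies $\widetilde{\Omega}_\rho$ with $\overline{\Omega}$ (Theorem \ref{LocCon}, Corollary \ref{LocConSim}). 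The meaning of ``quasieverywhere on $\partial\Omega$'' is thus encoded in the completion of complements of small-capacity subsets of $\Omega$, not in a capacity of subsets of $\partial\Omega$. To repair your argument you should either prove the boundary null-set preservation for $\Phi$ (a separate and nontrivial task), or reorganize as the paper does: upgrade continuity of $\widetilde v$ off $V_\varepsilon$ to uniform continuity for the capacitary metric of $\mathbb{D}$, transport it through the quasi-isometry of capacitary completions induced by $\varphi$ (Proposition \ref{QuasiInv}), and extend by completion instead of composing with $\Phi^{-1}$. A secondary caution: the continuity of the Carath\'eodory extension follows from local connectedness, but its injectivity (hence $\Phi$ being a homeomorphism) needs the stronger hypothesis actually assumed here, namely arbitrarily small connected neighborhoods \emph{in $\Omega$} at every boundary point; you should say explicitly that this rules out two distinct prime ends sharing an impression.
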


\begin{rem} The quasicontinuous function   $\widetilde{u}:\overline{\Omega} \to \mathbb R$  is defined at any point of $\partial \Omega$ except a set of conformal capacity zero (i.e. quasieverywhere). 
\end{rem}

The main ingredient of our method is a well-known concept of the conformal capacity and a less known concept of the conformal capacitary boundary introduced by V.~Go\-l'd\-sh\-tein and S.~K.~Vodop'yanov \cite{GV} for quasiconformal homeomorphisms. In the plane case "points" of the conformal capacitary boundary coincide with the Caratheodory prime ends.

Main properties of the space $L^1_2(\mathbb D)$ where $\mathbb D\subset\mathbb R^2$ is the unit disc are well known. Dirichlet spaces  $L^1_2(\Omega)$ are conformal invariants. Therefore the Riemann Mapping Theorem permits us to transfer necessary information about boundary behavior of spaces $L^1_2(\Omega$ from $L^1_2(\mathbb D)$ in the case of simply connected domains $\Omega$.
 
More precisely, we extend the concept of quasicontinuity to a "capacitary" completion of a domain $\Omega$. We construct a conformal capacitary boundary as a completion $\big\{ \widetilde{\Omega_{\rho}},\rho\big\}$ of a metric space $\big\{{\Omega_{\rho}},\rho\big\}$ for a conformal capacitary metric $\rho$ (see section 1). Roughly speaking, an "ideal" capacitary boundary point is a boundary continuum of the conformal capacity zero.

Our method allow us to treat the general case of simply connected plane domains $\Omega\subset\mathbb R^2$. We prove that any function $u\in L^1_2(\Omega)$ has a quasicontinuous extension onto the conformal capacitary boundary $H_{\rho}=\widetilde{\Omega}_{\rho}\setminus{\Omega}_{\rho}$. The main result is: 

\begin{thm} 
Let $\Omega\subset\mathbb R^2$ be a simply connected domain, $\Omega\ne\mathbb R^2$. Then for any function $u\in L_{2}^{1}(\Omega)$
there exists a quasicontinuous function $\widetilde{u}:\widetilde{\Omega}_{\rho}\to \mathbb R$
such that $\tilde{u}|_{\Omega}=u$. 
\end{thm}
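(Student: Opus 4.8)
The plan is to reduce everything to the unit disc $\mathbb D$ via the Riemann Mapping Theorem and to exploit the conformal invariance of $L^1_2$. Let $\varphi\colon\mathbb D\to\Omega$ be a Riemann conformal map. Since $u\in L^1_2(\Omega)$ and Dirichlet integrals are conformally invariant in the plane, the pullback $v=u\circ\varphi$ lies in $L^1_2(\mathbb D)$. On the disc the boundary behaviour of $L^1_2$ functions is classical: $v$ has a quasicontinuous representative $\widetilde v\colon\overline{\mathbb D}\to\mathbb R$, defined on $\partial\mathbb D$ up to a set of conformal capacity zero (this is the known theory of $L^1_2(\mathbb D)$ alluded to in the excerpt, obtainable e.g. from the trace/extension results and the characterization of capacity-zero subsets of the circle). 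The goal is then to \emph{transport} $\widetilde v$ back along $\varphi$ to obtain a function on $\widetilde\Omega_\rho$; the content is that $\varphi$ induces a homeomorphism between $\overline{\mathbb D}$ (modulo capacity-zero sets, i.e. the prime-end compactification) and $\widetilde\Omega_\rho$.

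The key steps, in order, are: (1) establish the conformal invariance $u\mapsto u\circ\varphi$ as an isometry $L^1_2(\Omega)\to L^1_2(\mathbb D)$, and likewise that the conformal capacity of sets is preserved — this is where the dimension $n=2$ is essential. (2) Recall/record the disc case: every $v\in L^1_2(\mathbb D)$ admits a quasicontinuous $\widetilde v$ on $\overline{\mathbb D}$, and a subset $E\subset\partial\mathbb D$ has conformal capacity zero iff it is negligible in the appropriate sense; in particular $\widetilde v$ is defined quasieverywhere on $\partial\mathbb D$. (3) Show that the conformal capacitary metric $\rho$ on $\Omega$ is, under $\varphi$, comparable to (indeed, identifiable with) the corresponding capacitary metric on $\mathbb D$, so that $\varphi$ extends to a homeomorphism $\overline{\mathbb D}_\rho\to\widetilde\Omega_\rho$ of the metric completions; equivalently, the capacitary boundary $H_\rho$ of $\Omega$ is the image under this extension of $\partial\mathbb D$ with capacity-zero boundary sets collapsed — which, as the excerpt notes, reproduces the Carathéodory prime ends. (4) Define $\widetilde u$ on $\widetilde\Omega_\rho$ as $\widetilde v$ composed with the inverse of this extended homeomorphism; check $\widetilde u|_\Omega=u$ and that $\widetilde u$ is quasicontinuous on $\widetilde\Omega_\rho$, using that capacity is preserved so that exceptional sets map to exceptional sets and the $\varepsilon$-open-set characterization of quasicontinuity transfers.

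The main obstacle is step (3): identifying the completion $\widetilde\Omega_\rho$ concretely and showing $\varphi$ extends to the boundary as a homeomorphism of completions. Unlike the Carathéodory theory, where local connectivity at the boundary forces $\varphi$ to extend continuously to $\overline{\mathbb D}$, a general simply connected $\Omega$ can have a very wild boundary, so $\varphi$ need not extend as a map into $\overline\Omega\subset\mathbb R^2$. The point of the capacitary boundary is precisely to furnish the "right" target for this extension, and one must verify that the metric $\rho$ on $\mathbb D$ (pulled back from $\Omega$) is such that Cauchy sequences for $\rho$ correspond exactly to sequences converging to a prime end, so that the completion is the prime-end compactification. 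This requires the capacity estimates for ring domains / condensers separating a boundary continuum (conformal modulus of annuli), and the fact that a boundary continuum of zero conformal capacity is precisely what a single prime end "sees." I expect this to rest on the modulus-of-curve-families machinery and the earlier-developed properties of the capacitary boundary; once it is in place, steps (1), (2) and (4) are essentially bookkeeping with conformal invariance and the definition of quasicontinuity.
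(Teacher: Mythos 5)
Your proposal follows essentially the same route as the paper: reduce to the unit disc via the Riemann map, use conformal invariance of the Dirichlet integral and of the conformal capacity, and rely on the induced quasi-isometry of the capacitary completions (Proposition \ref{QuasiInv}) together with the disc/extension-domain theory (Theorems \ref{thm:HomForExtension} and \ref{thm:ExtLuzin}). The only organizational difference is that the paper transfers the strong Luzin capacitary property from $\mathbb D$ to $\Omega$ and then extends $u$ to $\widetilde{\Omega}_{\rho}$ by the Tietze-type argument of Theorem \ref{thm:Tietz}, whereas you transport the already-constructed disc trace through the extended homeomorphism; the obstacle you flag in step (3) is exactly what the quasi-invariance of the capacity (hence of the capacitary metric) provides, and no identification with prime ends is actually required for the statement.
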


\begin{rem} A concept of the conformal capacitary metric $\rho$ and the conformal capacitary boundary was proposed in \cite{GV}. By a quasi-invariance of the conformal capacity under (quasi)conformal homeomorphisms any such homeomorphism $\varphi: \Omega\to\Omega'$ is a bi-Lipschitz homeomorphism $\varphi: (\Omega, \rho)\to(\Omega', \rho)$ for corresponding conformal metrics and can be extended to a homeomorphism $\tilde{\varphi}: (\tilde{\Omega}, \rho)\to(\tilde{\Omega'}, \rho)$ of the capacitary completions \cite{GV}. Recall that the paper \cite{GV}  is a short note and contains only sketches of proofs.
\end{rem}

There is the vast literature concerning of "ideal" boundaries of plane domains in the context of conformal homeomorphisms. We discuss few such concepts in the last section. 

The paper is organized as follows:

Main properties of the conformal capacitary metric are proved in Section 2.  The focus is on the local properties of the  metric at boundary points and its dependence on the local topological properties of the boundary. In section 3 we discuss an analog of the Luzin property for the capacitary metric. In section 4 we discuss a sufficient condition for existence of an extension of functions $u\in L_{2}^{1}(\Omega)$ to the capacitary boundary. We call this condition as a strong Luzin property for the capacitary metric. We prove this condition for comparatively large classes of domains that include extension domains for  $ L_{2}^{1}(\Omega)$. 
In Section 5 we apply the abstract construction of Section 4 to simply connected plane domains and we prove main results about extension of functions $u\in L_{2}^{1}(\Omega)$ to the capacitary boundary. 

{\it In terminology of the theory of Sobolev spaces we solved the classical trace problem for $ L_{2}^{1}(\Omega)$ in simply connected plane domains.}

\begin{rem} The classical trace problem for Sobolev spaces is of essential interest, mainly due to its important applications to boundary-value problems for partial differential equations. Boundary value problems can be specified with the help of traces to $\partial\Omega$ of Sobolev functions.

There is an extensive literature devoted to the trace problem of Sobolev spaces.
Among the multitude of results we mention the monographs of P.~Grisvard \cite{Gri}, J.~L.~Lions and E.~Magenes
\cite{LM}, V.~G.~Maz'ya and S.~Poborchi \cite{Maz}, \cite{MazP}, and the papers \cite{AS}, \cite{Be}, \cite{Ga}, \cite{J1}, 
\cite{J2}, \cite{JW}, \cite{MazP1}, \cite{MazP2}, \cite{MazP3}, \cite{MazPN},   \cite{Nik}, \cite{P1}, \cite{Vas}, \cite{Yak1}, 
\cite{Yak2}.

For smooth domains the traces of Sobolev functions are Besov spaces. In the case of Lipschitz domains the traces can be described also in terms of Besov spaces. For arbitrary non Lipschitz domain the trace problem is open.  For cusp type singularities a description of traces can be found in \cite{GVas} in terms of weighted Sobolev spaces.
\end{rem}

\section{ Conformal Capacitary Metric }

Let $\Omega$ be a plane domain and $F_{0}$, $F_{1}$ two disjoint compact subset of $\Omega$. We call the triple $E=(F_{0},F_{1};\Omega)$ a condenser.

The value \[
\cp(E)=\cp(F_{0},F_{1};\Omega)=\inf\int\limits _{\Omega}|\nabla v|^{2}~dx,
\]
 where the infimum is taken over all nonnegative functions $v\in C(\Omega)\cap L_{2}^{1}(\Omega)$,
such that $v=0$ in a neighborhood of the set $F_{0}$, and $v\geq1$
in a neighborhood of the set $F_{1}$, is called the conformal capacity
of the condenser $E=(F_{0},F_{1};\Omega)$. 

For finite values of capacity $0<\cp(F_{0},F_{1};\Omega)<+\infty$
there exists a unique function $u_{0}$ (an extremal function) such
that: \[
\cp(F_{0},F_{1};\Omega)=\int\limits _{\Omega}|\nabla u_{0}|^{2}~dx.\]
 An extremal function is continuous in $\Omega$, monotone in the
domain $\Omega\setminus(F_{0}\cup F_{1})$, equal to zero on $F_{0}$
and is equal to one on $F_{1}$ \cite{HKM,VGR}. 

\begin{defn}
A homeomorphism $\varphi:\Omega\to\Omega'$
between plane domains is called $K$-quasiconformal if it preserves
orientation, belongs to the Sobolev class $L^1_{2,loc}(\Omega)$
and the distortion inequality
$$
\max\limits_{|\xi|=1}|D\varphi(x)\cdot\xi|\leq K\min\limits_{|\xi|=1}|D\varphi(x)\cdot\xi|
$$
holds for almost all $x\in\Omega$.
\end{defn}

Infinitesimally, quasiconformal homeomorphisms carry circles to ellipses
with eccentricity uniformly bounded by $K$. If $K=1$ we recover
conformal homeomorphisms, while for $K>1$ plane quasiconformal mappings need
not be smooth. The theory of quasiconformal mappings can be find, for example, in \cite{Va}.

It is well known that the conformal capacity is quasi-invariant under action of plane quasiconformal homeomorphisms. 

\subsection{Definition of the conformal capacitary metrics}

A connected closed (with respect to $\Omega$) set is called a
continuum.  Fix a continuum $F$ in the domain $\Omega\subset\mathbb{R}^{2}$
and a compact domain $V$ such that $F\subset V\subset\overline{V}\subset\Omega$,
and the boundary $\partial V$ is an image of the unit circle $S(0,1)$ under some
quasiconformal 
homeomorphism of $\mathbb{R}^{2}$.

\begin{defn}
Choose arbitrarily points $x,y\in\Omega\subset\mathbb R^n$ and
joint $x,y$ by a rectifiable curve $l(x,y)$. Define the conformal capacitary distance
between $x$ and $y$ in $\Omega$ with respect to pair $(F,V)$ as the following quantity
 \[
\rho_{(F,V)}(x,y)=\inf\limits _{l(x,y)}\{\cp^{\frac{1}{2}}(F,l(x,y)\setminus V;\Omega)+\cp^{\frac{1}{2}}(\partial\Omega,l(x,y)\cap V;\Omega)\}\]
 where the infimum is taken over all curves $l(x,y)$ satisfying the
above conditions. 
\end{defn}

This definition firstly was introduced in \cite{GV}, where was claimed that the distance $\rho_{(F,V)}(x,y)$ is a metric in $\Omega$ which is  quasi-invariant under quasiconformal homeomorphisms and is invariant under conformal ones.

Denote by $\big\{ \widetilde{\Omega},\rho_{(F,V)}\big\} $ the
standard completion of the metric space $\big\{ \Omega,\rho_{(F,V)}\big\} $
and by $H_{\rho}$ the set $\big\{ \widetilde{\Omega},\rho_{(F,V)}\big\}\setminus \left\{ \Omega,\rho_{(F,V)}\right\}$.
We call $H_{\rho}$ a conformal capacitary boundary of $\Omega$. It will be
proved that the topology of $H_{\rho}$ does not depends on choice of
a pair $(F,V)$. Moreover two conformal capacitary metrics are equivalent
for any different choice of pairs $(F_{1},V_{1})$ and $(F_{2},V_{2})$.
This is a justification of the notation $H_{\rho}$ for the conformal capacitary
boundary.

\subsection{Equivalence of different conformal capacitary distances}

We start from an important technical observation:

\begin{lem} 
\label{lem:QuasiEst}
Let $\Omega$ be a domain in $\mathbb R^2$ and $F_{1}$ be a compact in $\Omega$. 
Then there exists a constant $0<K<\infty$ such that
\[
\frac{1}{K}\cp(F_{02},F_1;\Omega)\leq \cp(F_{01},F_1;\Omega)\leq {K}\cp(F_{02},F_1;\Omega)
\]
for every compacts $F_{01}\subset\Omega$, $F_{02}\subset\Omega$ such that compacts $F_{01}$, $F_{02}$, $F_{1}$ are mutually disjoint.

\end{lem}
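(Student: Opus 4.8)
The plan is to prove the two inequalities separately; since the roles of $F_{01}$ and $F_{02}$ are symmetric, it is enough to show how to turn an admissible test function for the condenser $(F_{02},F_1;\Omega)$ into one for $(F_{01},F_1;\Omega)$ at the cost of a bounded factor in the Dirichlet energy. First I would fix an auxiliary bounded subdomain $V$ with $F_{01}\cup F_{02}\subset V\subset\overline{V}\subset\Omega$ and $\overline{V}\cap F_1=\emptyset$ — possible because $\Omega\setminus F_1$ is open and contains the compact set $F_{01}\cup F_{02}$ — together with a Lipschitz function $\eta\colon\Omega\to[0,1]$ that vanishes on a neighbourhood of $F_{01}$ and equals $1$ on $\Omega\setminus V$ (e.g. a clipped multiple of $\dist(\cdot,F_{01})$). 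Given a nonnegative $v\in C(\Omega)\cap L^1_2(\Omega)$ with $v=0$ near $F_{02}$ and $v\ge1$ near $F_1$, set $w=\eta v$. Then $w\in C(\Omega)\cap L^1_2(\Omega)$ is nonnegative, $w=0$ near $F_{01}$, and $w=v\ge1$ on a neighbourhood of $F_1$ (which lies in $\Omega\setminus\overline{V}\subset\{\eta=1\}$), so $w$ is admissible for $(F_{01},F_1;\Omega)$. From $\nabla w=\eta\nabla v+v\nabla\eta$, $0\le\eta\le1$, and $\supp\nabla\eta\subset\overline V$,
\[
\int_\Omega|\nabla w|^2\,dx\le 2\int_\Omega|\nabla v|^2\,dx+2\|\nabla\eta\|_\infty^2\int_{\overline V}v^2\,dx .
\]

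The remaining term is controlled by a Poincaré inequality: since $v$ vanishes on a neighbourhood of $F_{02}$, which (being a nondegenerate continuum) has positive conformal capacity, one has $\int_{\overline V}v^2\,dx\le C\int_\Omega|\nabla v|^2\,dx$ with $C=C(\Omega,V,F_{02})$ independent of $v$: apply the Poincaré inequality relative to a set of positive capacity \cite{HKM,Maz} on a fixed bounded smooth domain $V'$ with $\overline V\subset V'\subset\overline{V'}\subset\Omega$, then bound $\int_{\overline V}v^2\le\int_{V'}v^2$ and $\int_{V'}|\nabla v|^2\le\int_\Omega|\nabla v|^2$. Substituting, $\int_\Omega|\nabla w|^2\le(2+2C\|\nabla\eta\|_\infty^2)\int_\Omega|\nabla v|^2$, and taking the infimum over admissible $v$ gives $\cp(F_{01},F_1;\Omega)\le K\,\cp(F_{02},F_1;\Omega)$; the reverse inequality follows by interchanging $F_{01}$ and $F_{02}$ (a cut-off vanishing near $F_{02}$, Poincaré relative to $F_{01}$), and one takes the larger constant. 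The degenerate cases $\cp(F_{02},F_1;\Omega)\in\{0,\infty\}$ are covered automatically: the value $\infty$ makes the bound vacuous, while energies of $v$ tending to $0$ force energies of $w$ to $0$.

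The point that needs care is not the construction but the claim that the constant is genuinely uniform: it must depend only on $\Omega$, on the fixed compact $V$ (hence on the size and location of $F_{01},F_{02}$), and on $F_1$ only through $\overline V\cap F_1=\emptyset$, so that it is insensitive to how large or complicated $F_1$ is. Thus the essential input is the Poincaré inequality relative to $F_{01}$ and to $F_{02}$, and in particular the hypothesis that these sets are nondegenerate continua — a set of zero capacity would have zero condenser capacity against everything and no such comparison could hold. An alternative, cut-off-free derivation would identify the conformal capacity with the modulus of the family of curves joining the plates and use a serial estimate across the quasicircle $\partial V$, bounding $\cp(F_{0i},F_1;\Omega)$ above by $a_i m/(a_i+m)$ with $a_i=\cp(F_{0i},\partial V;V)$ fixed and $m=\cp(\partial V,F_1;\Omega\setminus V)$ independent of $i$; supplemented by a lower bound of the same shape this again isolates and cancels the $F_1$-dependence.
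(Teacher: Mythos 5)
Your proof is correct and reaches the conclusion by a genuinely different route from the paper's. The paper's argument is geometric: it covers $F_{01}$ by finitely many small balls, uses monotonicity, subadditivity and the quasi-invariance of the conformal capacity under bi-Lipschitz self-homeomorphisms of $\Omega$ to replace $F_{01}$ by a single small ball, transports that ball onto a ball near $F_{02}$ by a further bi-Lipschitz map, and lets the radius tend to zero. You instead work directly with test functions: you multiply an admissible $v$ for $(F_{02},F_1;\Omega)$ by a cut-off $\eta$ vanishing near $F_{01}$ and equal to $1$ outside a fixed compact neighbourhood $V$ of $F_{01}\cup F_{02}$, and absorb the error term $\int_{\overline{V}}v^{2}|\nabla\eta|^{2}$ via the Maz'ya--Poincar\'e inequality relative to the positive-capacity set on which $v$ vanishes (the boundedness of $v$ on $\overline{V}$, which you need for $\eta v\in L^1_2$, comes for free from continuity). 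What your version buys is an explicit constant $K=2+2C\|\nabla\eta\|_{\infty}^{2}$ whose dependence is transparent: on $\Omega$, on $F_{01},F_{02}$ (through $V$, $\eta$ and the Poincar\'e constant), and on $F_{1}$ only through the separation $\overline{V}\cap F_{1}=\emptyset$ --- precisely the uniformity needed when the lemma is invoked in Theorem~\ref{thm:QuasiInvMet}, where the role of $F_{1}$ is played by curves $l(x,y)$ ranging over all admissible curves avoiding the fixed sets $V_{1},V_{2}$; the covering multiplicities and Lipschitz constants in the paper's proof are harder to track in this respect. What the paper's route buys is independence from the Poincar\'e inequality, using only monotonicity, subadditivity and quasi-invariance of capacity. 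The caveat you raise is genuine but afflicts the paper's proof equally: no constant can work for \emph{all} disjoint compacts $F_{01},F_{02}$ as literally stated (a one-point $F_{01}$ has zero condenser capacity against $F_1$ while a nondegenerate $F_{02}$ need not), so the lemma must be read with $F_{01},F_{02}$ nondegenerate continua, and the constant inevitably depends on them; this is consistent with how the lemma is actually used.
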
 

\begin{proof} 
Let $U_{1}\supset F_{01}$ be a $\varepsilon$-neighborhood
of a set $F_{01}$ in $\Omega$ and $U_{2}\supset F_{02}$ be a $\varepsilon$-neighborhood
of the set $F_{02}$ in $\Omega$ such that $\overline{U_1}\subset\Omega$ and $\overline{U_2}\subset\Omega$. 
Because the set $F_{01}$ is a compact set there exist a
finite covering $\left\{ B_{i}\right\} _{i=1,...,N}$ of $F_{01}$ by
balls $B_{i}\in\Omega$ of radius $\varepsilon$. It means that $\Omega\supset\bigcup_{i=1}^{N}B_{i}\supset F_{1}$.

Then \[
\cp(F_{01},F_1;\Omega)\leq\cp(\bigcup_{i=1}^{N}\overline{B_{i}},F_1;\Omega).\]

For every ball $B_i$, $i=2,..N$, we can construct a bi-Lipschitz homeomorphism $\psi_i$ of $\Omega$ onto itself that maps 
$B_i$ onto $B_1$.
Using quasi-invariance of the conformal capacity under bi-Lipschitz homeomorphisms we have
\[
\cp(F_{01},F_1;\Omega)\leq\cp(\bigcup_{i=1}^{N}\overline{B_{i}},F_1;\Omega)\leq C_{1}\cdot\cp(\overline{B_1},F_1;\Omega)\]
 where a constant $C_{1}$ depends only on the multiplicity of the
covering and $F_{1}$. 

Applying the same construction to $F_{02}$ we can construct a finite covering $\left\{ \tilde{B}_{i}\right\} _{i=1}^{N}$ of $F_{02}$ by
balls $\tilde{B}_{i}\in\Omega$ of radius $\varepsilon$ such that $\Omega\supset\bigcup_{i=1}^{N}\tilde{B}_{i}\supset F_{02}$. Then
 
\[\cp(F_{02},F_1;\Omega)\leq C_{2}\cdot\cp(\overline{\tilde{B}_1},F_1;\Omega)\]
where a constant $C_{2}$ depends only on the multiplicity of the
covering of $F_{2}$ and $F_{2}$ itself. 

To combine both estimate we construct a bi-Lipschitz homeomorphism
$\varphi$ of $\Omega$ that maps the ball $B_1\subset U_{1}$
onto a ball $\tilde{B}_1\subset U_{2}$. Using quasi
invariance of the conformal capacity under bi-Lipschitz homeomorphisms
we obtain
\[
\cp(\overline{B_1},F_1;\Omega)\leq M\cp(\overline{\tilde{B}_1},F_1;\Omega)\leq M\cp(\overline{U_{2}},F_1;\Omega)
\]
where $M$ is the Lipschitz constant of $\varphi$. Hence
\[
\cp_{p}(F_{01},F_1;\Omega)\leq C_{1}M\cp_{p}(\overline{U_{2}},F_1;\Omega).
\]
Therefore 
\begin{equation}
\cp(F_{01},F_1;\Omega)\leq\lim_{\varepsilon\to 0}K\cp(\overline{U_{2}},F_1;\Omega)=K\cp(F_{02},F_1;\Omega).\label{eq:Pcap}
\end{equation}

Using the same construction and the inverse bi-Lipschitz homeomorphisms $\varphi^{-1}$ we obtain

\begin{equation}
\cp(F_{02},F_1;\Omega)\leq\lim_{\varepsilon\to 0}C_{2}{M}\cp(\overline{U_{1}},F_1;\Omega)=C_{2}{M}\cp(F_{01},F_1;\Omega).\label{eq:Pcap2}
\end{equation}
Using inequalities \ref{eq:Pcap} and \ref{eq:Pcap2} we obtain
\[
\frac{1}{K}\cp(F_{02},F_1;\Omega)\leq \cp(F_{01},F_1;\Omega)\leq {K}\cp(F_{02},F_1;\Omega). 
\]
\end{proof}

Using this lemma we prove

\begin{thm}
\label{thm:QuasiInvMet}
Let $\Omega$ be a domain in $\mathbb R^2$. Suppose that $\rho_{(F_{1},V_{1})}$ and $\rho_{(F_{2},V_{2})}$ are two conformal capacitary distances  on $\Omega$.
Then there exists a constants $0<K<\infty$ such that
\[
\frac{1}{K}\rho_{(F_{2},V_{2})}(x,y)\leq\rho_{(F_{1},V_{1})}(x,y)\leq K\rho_{(F_{1},V_{1})}(x,y)\,\,\,\text{for any}\,\,\,x,y\in \Omega.
\]
\end{thm}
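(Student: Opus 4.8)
The plan is to reduce the comparison of the two composite distances $\rho_{(F_1,V_1)}$ and $\rho_{(F_2,V_2)}$ to a term-by-term comparison of the two capacities appearing in the infimum, using Lemma~\ref{lem:QuasiEst} on each piece. Recall that, for a curve $l=l(x,y)$,
\[
\rho_{(F_i,V_i)}(x,y)=\inf_{l}\bigl\{\cp^{1/2}(F_i,l\setminus V_i;\Omega)+\cp^{1/2}(\partial\Omega,l\cap V_i;\Omega)\bigr\},
\]
so it suffices to produce a single constant $K$ with
\[
\cp(F_1,G;\Omega)\le K\,\cp(F_2,G;\Omega),\qquad \cp(\partial\Omega,H\cap V_1;\Omega)\le K\,\cp(\partial\Omega,H\cap V_2;\Omega)
\]
uniformly over all admissible ``second arguments'' $G$ and $H$, and then take square roots, noting that $\sqrt{a+b}\le\sqrt{a}+\sqrt{b}\le\sqrt{2}\sqrt{a+b}$, so that adding the two resulting inequalities controls the whole sum. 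Then, passing to the infimum over $l$ on both sides yields the theorem with a constant depending only on $K$ (and the elementary factor $\sqrt{2}$).

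The first of the two capacity comparisons is exactly Lemma~\ref{lem:QuasiEst}: since $F_1$ and $F_2$ are both compacta in $\Omega$, and the ``fixed'' compactum in that lemma is allowed to be an arbitrary compact in $\Omega$ (here the role of $F_1$ in the lemma is played by $l\setminus V_i$, a compact subset of $\Omega$), we get $\tfrac1{K_1}\cp(F_2,G;\Omega)\le\cp(F_1,G;\Omega)\le K_1\cp(F_2,G;\Omega)$ with $K_1$ depending only on $F_1,F_2,\Omega$, hence independent of the curve $l$. The second comparison, $\cp(\partial\Omega,l\cap V_1;\Omega)$ versus $\cp(\partial\Omega,l\cap V_2;\Omega)$, is more delicate because the two condensers now differ in the \emph{second} plate ($l\cap V_1$ vs.\ $l\cap V_2$) and because $\partial\Omega$ is not a compact in $\Omega$; moreover $l\cap V_1$ and $l\cap V_2$ need not be comparable in any naive sense (one curve may wind through $V_1$ but barely touch $V_2$). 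The way around this is to exploit that both $V_1$ and $V_2$ are \emph{fixed} compact domains with $\overline{V_i}\subset\Omega$: one can find a compact domain $W$ with $\overline{V_1}\cup\overline{V_2}\subset W\subset\overline W\subset\Omega$ and a bi-Lipschitz self-homeomorphism of $\Omega$ fixing $\partial\Omega$ (supported in $W$) that carries $V_1$ onto $V_2$; by quasi-invariance of conformal capacity under bi-Lipschitz maps this transports $\cp(\partial\Omega,l\cap V_1;\Omega)$ to $\cp(\partial\Omega,\varphi(l)\cap V_2;\Omega)$, and then one replaces the curve $\varphi(l)$ by $l$ inside the infimum defining $\rho$ — i.e., this step is best carried out at the level of the metric, not of a single fixed curve.

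I expect the main obstacle to be precisely this second comparison, and in particular the need to handle it at the level of the infimum over curves rather than curve-by-curve: since an admissible curve for one pair $(F_i,V_i)$ (it must satisfy $F_i\subset V_i$ and the topological normalization of $\partial V_i$, but these are conditions on the \emph{pair}, not on $l$) is automatically admissible for the other, and a bi-Lipschitz self-map of $\Omega$ fixing $\partial\Omega$ carries rectifiable curves to rectifiable curves, one shows $\rho_{(F_1,V_1)}(x,y)\le C\,\rho_{(F_2,V_2)}(\varphi(x),\varphi(y))$ first, and then absorbs the displacement $x\mapsto\varphi(x)$ using that $\varphi$ is, near any point, bi-Lipschitz for the Euclidean metric while $\rho$ is locally comparable to a power of the Euclidean distance away from $\partial\Omega$ (or, more cleanly, one simply chooses $\varphi$ to be the identity on a neighborhood of any prescribed pair of points, which is possible since $V_1,V_2$ are compactly contained). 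Once both one-sided estimates are in hand, combining them gives the two-sided bound with $K$ depending only on $F_1,V_1,F_2,V_2,\Omega$, which is all that is claimed. The routine parts — the elementary inequality for $\sqrt{a+b}$, the bookkeeping of constants through the two applications, and the verification that admissibility of curves is preserved — I would not belabor.
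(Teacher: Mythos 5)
Your skeleton --- compare the two distances term by term using Lemma~\ref{lem:QuasiEst} and then pass to the infimum over curves --- matches the first half of the paper's argument, but your treatment of the $V_1$-versus-$V_2$ discrepancy has genuine gaps. First, your reduction asks for $\cp(F_1,G;\Omega)\le K\,\cp(F_2,G;\Omega)$ with the \emph{same} second plate $G$ in both capacities, whereas the definition pairs $F_1$ with $l\setminus V_1$ and $F_2$ with $l\setminus V_2$; these are different sets, so even your ``easy'' first comparison is not literally an application of Lemma~\ref{lem:QuasiEst}, and the mismatch is of the same nature as the one you flag in the second term. Second, and more seriously, the repair you propose for the second term cannot work as described: a bi-Lipschitz self-map $\varphi$ of $\Omega$ with $\varphi(V_1)=V_2$ cannot be chosen to be the identity near an \emph{arbitrary} prescribed pair of points, because if $x\in V_1\setminus V_2$ then $\varphi(x)=x$ and $\varphi(x)\in V_2$ are incompatible --- precisely the points for which the $V$-terms matter are the ones you cannot fix. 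Your fallback, absorbing the displacement $x\mapsto\varphi(x)$ via local comparability of $\rho$ with the Euclidean distance, yields at best an additive error of the order of $\rho(\varphi(x),x)$, which does not tend to zero as $\rho(x,y)\to0$; hence it cannot produce the uniform multiplicative constant the theorem asserts for nearby points lying in the region where $V_1$ and $V_2$ differ. As written, the second half of your argument does not close.

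For comparison, the paper disposes of the $V$-terms much more bluntly: it declares that without loss of generality the admissible curves $l(x,y)$ may be taken to avoid $V_1\cup V_2$, so both terms $\cp(\partial\Omega,l\cap V_i;\Omega)$ disappear, and then Lemma~\ref{lem:QuasiEst} is applied to $\cp(F_1,l;\Omega)$ and $\cp(F_2,l;\Omega)$ curve by curve, with the constant read off as independent of $l$, before passing to the infimum. Whether or not one finds that ``WLOG'' fully satisfying (it is itself delicate when $x$ or $y$ lies in $V_1\cup V_2$), it is exactly the step your bi-Lipschitz transport was meant to replace, and that replacement is where your proof breaks. To salvage your route you would need an actual comparison of $\rho_{(F_2,V_2)}(\varphi(x),\varphi(y))$ with $\rho_{(F_2,V_2)}(x,y)$ valid for all pairs, or else a direct curve-by-curve estimate relating the mixed terms $\cp(\partial\Omega,l\cap V_1;\Omega)$ and $\cp(F_2,l\setminus V_2;\Omega)$ for curves passing through $V_1\setminus V_2$, neither of which is supplied.
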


\begin{proof} Consider two continuums $F_{1}$ and $F_{2}$ in the
domain $\Omega$. 
Without loss of generality we can suppose that any admissible for
the conformal capacitary metric curve $l(x,y)$ does not intersect $V_{1}$ and $V_{2}$.
Then by Lemma \ref{lem:QuasiEst}
\[
\frac{1}{K}\cp(F_{2},l(x,y);\Omega)\leq \cp(F_{1},l(x,y);\Omega)\leq {K}\cp(F_{2},l(x,y);\Omega) 
\]
for any admissible for
the conformal capacitary metric curve $l(x,y)$. Hence
\[
\frac{1}{K}\rho_{(F_{2},V_{2})}(x,y)\leq\rho_{(F_{1},V_{1})}(x,y)\leq K\rho_{(F_{1},V_{1})}(x,y)\]
for any $x,y\in \Omega$.
\end{proof}

\subsection{Conformal capacitary distance is a metric}

The sketch of the proof can be found in \cite{GV}. For readers convenience we prove this fact in details.

\begin{lem} 
\label{lem:CapContin}
Let $\Omega$ be a domain in $\mathbb R^2$ and $F$ be a compact subdomain of $\Omega$. Suppose that
$x\in\Omega\setminus\overline{F}$, $B(x,2r)\subset\Omega\setminus\overline{F}$, $r>0$ and curve $\gamma$ joints $x$ and $S(x,2r)$. Then  
$\cp(F,\gamma;\Omega)>c(r)>0$.
\end{lem}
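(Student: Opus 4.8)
The plan is to reduce the estimate, by monotonicity of the conformal capacity, to the well-known positivity of the capacity of a non-degenerate condenser. Since $\gamma$ joins $x$ to a point of $S(x,2r)$, the function $t\mapsto\dist(\gamma(t),x)$ attains every value in $[0,2r]$; let $\gamma_{0}\subset\gamma$ be the subarc running from $x$ up to the first point where it meets $S(x,3r/2)$. Then $\gamma_{0}$ is a continuum of diameter at least $3r/2$ contained in $\overline{B(x,3r/2)}\subset B(x,2r)\subset\Omega\setminus\overline{F}$, so $\gamma_{0}\cap F=\emptyset$; choose also a non-degenerate continuum $F_{0}\subset F$ (for instance a closed disc contained in $F$, or a small subcontinuum of $F$). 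Since the conformal capacity of a condenser is monotone with respect to each of its plates, $\cp(F,\gamma;\Omega)\ge\cp(F,\gamma_{0};\Omega)\ge\cp(F_{0},\gamma_{0};\Omega)$, and it suffices to bound $\cp(F_{0},\gamma_{0};\Omega)$ from below.

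I would show that this last capacity is strictly positive by contradiction. If $\cp(F_{0},\gamma_{0};\Omega)=0$, take admissible functions $v_{j}$ with $\int_{\Omega}|\nabla v_{j}|^{2}\to 0$ and, after replacing $v_{j}$ by its truncation to $[0,1]$, assume $0\le v_{j}\le 1$, so that $v_{j}=0$ near $F$ and $v_{j}=1$ near $\gamma_{0}$. Since $\Omega$ is connected, fix once and for all a finite chain of balls $D_{0},\dots,D_{N}$ with $\overline{D_{k}}\subset\Omega$ and $D_{k}\cap D_{k+1}\ne\emptyset$, chosen so that $F_{0}\cap D_{0}$ and $\gamma_{0}\cap D_{N}$ have positive conformal capacity; this is possible because a continuum has positive $2$-capacity relative to any ball centred at one of its points. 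The ordinary Poincaré inequality on balls shows that the mean values of $v_{j}$ over consecutive balls $D_{k},D_{k+1}$ differ by at most $C\,\|\nabla v_{j}\|_{L_{2}(D_{k}\cup D_{k+1})}$, which tends to $0$; hence the mean of $v_{j}$ over $D_{0}$ and over $D_{N}$ have the same limit. On the other hand, the capacitary Poincaré inequality, applied to $v_{j}$ on $D_{0}$ (which vanishes quasi-everywhere on $F_{0}\cap D_{0}$) and to $1-v_{j}$ on $D_{N}$ (which vanishes quasi-everywhere on $\gamma_{0}\cap D_{N}$), forces these mean values to tend to $0$ and to $1$ respectively, a contradiction.

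The lower bound obtained this way depends a priori on $\gamma$ through $\gamma_{0}$; to turn it into a constant $c(r)>0$ depending only on $r$ and the fixed data $\Omega,F,x$, I would note that every admissible $\gamma_{0}$ is a continuum $K$ with $x\in K\subset\overline{B(x,3r/2)}$ and $K\cap S(x,3r/2)\ne\emptyset$, that this family of continua is compact in the Hausdorff metric, and that $K\mapsto\cp(F_{0},K;\Omega)$ is lower semicontinuous and, by the previous paragraph, positive on it, hence has positive infimum. The main obstacle is the positivity step: every monotonicity property of the conformal capacity runs the wrong way for a lower bound (enlarging a plate, or enlarging the domain, can only increase the capacity), so there is no purely geometric comparison with a round spherical ring, and one must invoke the potential-theoretic fact that non-degenerate continua are non-polar, through a Poincaré-type inequality. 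It is also worth keeping in mind that $c$ genuinely depends on $\Omega$ and $F$, not merely on $r$: a long thin channel of $\Omega$ joining $F$ to $B(x,2r)$ makes $\cp(F,\gamma;\Omega)$ arbitrarily small.
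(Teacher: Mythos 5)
Your first two steps are sound: the reduction by monotonicity to a fixed non\-/degenerate continuum $F_{0}\subset F$ and a subcontinuum $\gamma_{0}\subset\gamma$ of diameter $\ge 3r/2$, and the chain-of-balls/capacitary-Poincar\'e argument showing $\cp(F_{0},\gamma_{0};\Omega)>0$ for each \emph{fixed} $\gamma_{0}$, are both correct (the latter is the standard proof that two non-polar compacts in a domain form a condenser of positive capacity). The gap is in the final uniformity step. You need $K\mapsto\cp(F_{0},K;\Omega)$ to be \emph{lower} semicontinuous in the Hausdorff metric on your family of continua, but the semicontinuity that actually follows from the definition (a function admissible for $(F_{0},K)$ is $\ge 1$ on an open neighborhood of $K$, which eventually contains every $K_{n}$) is the \emph{upper} one, and an upper semicontinuous positive function on a compact set can perfectly well have infimum zero. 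For general compacts lower semicontinuity is simply false (finite $1/n$-nets of a segment Hausdorff-converge to the segment while their capacities can tend to $0$); for continua it is essentially equivalent to the uniform lower bound you are trying to prove, so invoking it here is close to circular. As stated, your argument only yields positivity of $\cp(F,\gamma;\Omega)$ for each individual $\gamma$, not a constant $c(r)>0$ valid for all $\gamma$.

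The paper closes this gap by a different localization. It picks a continuum $F_{r}$, disjoint from $\gamma$, joining $S(x,r)$ to $S(x,2r)$, and uses Lemma~\ref{lem:QuasiEst} to replace $F$ by $F_{r}$ at the cost of a multiplicative constant: $\cp(F,\gamma;\Omega)\ge Q\,\cp(F_{r},\gamma;\Omega)$. After shrinking the plate $\gamma$ to $\gamma\cap\{r\le|z-x|\le 2r\}$, both plates are disjoint continua spanning an annulus contained in $B(x,2r)\subset\Omega$, and here monotonicity in the domain runs the \emph{right} way ($\cp(\cdot\,;\Omega)\ge\cp(\cdot\,;B(x,2r))$ once both plates sit in $B(x,2r)$), so the universal Teichm\"uller/Loewner-type lower bound for two continua spanning an annulus (Proposition~4.6 of \cite{GGR}) applies uniformly in $\gamma$. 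This contradicts your remark that ``there is no purely geometric comparison with a round spherical ring'': there is, provided one first transplants $F$ into the annulus. To repair your own write-up without appealing to unproved semicontinuity, you could argue on an admissible function $v$ directly: either the set $\{v\le 1/3\}$ contains a continuum spanning the annulus $B(x,2r)\setminus\overline{B(x,r)}$, in which case the annulus estimate gives $\int_{\Omega}|\nabla v|^{2}\ge c_{0}(r)$ with $c_{0}$ universal, or else $\{v>1/3\}$ contains a curve separating the boundary circles of that annulus, in which case $3v$ is admissible for a condenser with second plate a fixed continuum near $x$, whose capacity is positive by your chain argument. Either way one gets a bound independent of $\gamma$.
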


\begin{proof} Choose a continuum $F_r$, such that $F_r\cap\gamma=\emptyset$, which connects spheres $S(x,r)$ and $S(x,2r)$.
By Lemma~\ref{lem:QuasiEst}
$$
\cp(F,\gamma;\Omega)\geq Q\cp(F_r,\gamma;\Omega).
$$
Using the properties of capacity and Proposition~4.6 from \cite{GGR} we obtain
$$
\cp(F_r,\gamma;\Omega)\geq \cp(F_r,\gamma\cap\{z:r\leq |x-z|\leq 2r \};\Omega)\geq c(r)>0.
$$
Hence
$\cp(F,\gamma;\Omega)\geq c(r)>0$.
\end{proof} 

\begin{thm} 
\label{thm:ProofMetric}
Let $\Omega$ be a domain in $\mathbb R^2$. Then the $\rho_{(F,V)}(x,y)$
is a metric in $\Omega$.
\end{thm}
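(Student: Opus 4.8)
The plan is to verify the three metric axioms for $\rho_{(F,V)}$: positivity (with $\rho(x,y)=0\iff x=y$), symmetry, and the triangle inequality. Symmetry is immediate since the definition of $\rho_{(F,V)}(x,y)$ is manifestly symmetric in $x$ and $y$ (a curve $l(x,y)$ is the same object as $l(y,x)$, and both capacity terms depend only on the curve as a set). Finiteness is also routine: for any fixed $x,y\in\Omega$ one can choose a rectifiable curve $l(x,y)$ avoiding $F$ and $\partial\Omega$ (shrinking near the endpoints if necessary), so both capacities are finite and $\rho_{(F,V)}(x,y)<\infty$.

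The positivity axiom is where Lemma~\ref{lem:CapContin} does the work. Clearly $\rho_{(F,V)}(x,x)=0$ by taking a degenerate curve. For the converse, suppose $x\ne y$; I would pick $r>0$ small enough that $B(x,2r)\subset\Omega$, that $y\notin B(x,2r)$, and (using that $F$ is a compact subdomain) that $B(x,2r)\subset\Omega\setminus\overline{F}$ — if $x$ happens to lie in $\overline F$, one instead works near $y$, or shrinks $r$ so that the ball is disjoint from $F$; in any case at least one of $x,y$ admits such a ball, and one may also arrange $B(x,2r)$ to avoid $V$ after possibly further shrinking. Any admissible curve $l(x,y)$ must connect $x$ to $S(x,2r)$ (since $y$ lies outside the ball), hence contains a subcurve $\gamma$ joining $x$ to $S(x,2r)$, and $l(x,y)\cap V=\emptyset$ by the normalization in Theorem~\ref{thm:QuasiInvMet}'s proof (or simply by a separate choice). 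Then Lemma~\ref{lem:CapContin} gives $\cp(F,l(x,y)\setminus V;\Omega)\ge\cp(F,\gamma;\Omega)\ge c(r)>0$ uniformly over all admissible $l(x,y)$, so $\rho_{(F,V)}(x,y)\ge c(r)^{1/2}>0$.

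For the triangle inequality $\rho_{(F,V)}(x,z)\le\rho_{(F,V)}(x,y)+\rho_{(F,V)}(y,z)$, I would take near-optimal curves $l(x,y)$ and $l(y,z)$ and concatenate them into a curve $l(x,z)$ through $y$. The point is that each of the two capacity terms is subadditive under this concatenation: writing $l(x,z)=l(x,y)\cup l(y,z)$, one has $l(x,z)\setminus V\subset(l(x,y)\setminus V)\cup(l(y,z)\setminus V)$ and $l(x,z)\cap V\subset(l(x,y)\cap V)\cup(l(y,z)\cap V)$, and capacity is monotone and subadditive in its second (plate) argument — if $v_1$ is admissible for the condenser with plate $l(x,y)\setminus V$ and $v_2$ for the one with plate $l(y,z)\setminus V$, then $\min(v_1,v_2)$ — or rather a function dominating both near the union — is admissible for the union, giving $\cp(F,A\cup B;\Omega)\le\cp(F,A;\Omega)+\cp(F,B;\Omega)$ (this standard subadditivity of capacity in the plate is the relevant fact). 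Applying $\sqrt{a+b}\le\sqrt a+\sqrt b$ to each of the two capacity terms and summing yields
\[
\rho_{(F,V)}(x,z)\le \cp^{1/2}(F,l(x,z)\setminus V;\Omega)+\cp^{1/2}(\partial\Omega,l(x,z)\cap V;\Omega)
\le \bigl[\rho_{(F,V)}(x,y)+\varepsilon\bigr]+\bigl[\rho_{(F,V)}(y,z)+\varepsilon\bigr],
\]
and letting $\varepsilon\to0$ finishes the argument.

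**The main obstacle** I anticipate is the positivity axiom when one of the points lies on or near $\overline F$ (or on $V$): the clean hypothesis of Lemma~\ref{lem:CapContin} requires a ball $B(x,2r)\subset\Omega\setminus\overline F$, so some care is needed to reduce the general two-point case to that situation — either by symmetry (work at whichever of $x,y$ is away from $\overline F$) or by using the second capacity term $\cp^{1/2}(\partial\Omega,l\cap V;\Omega)$ together with Lemma~\ref{lem:CapContin} applied with the roles of the fixed set and $\partial\Omega$ suitably interchanged, via another application of Lemma~\ref{lem:QuasiEst} to replace $F$ by a nearby continuum disjoint from the relevant point. A secondary, more bookkeeping-level subtlety is making precise the admissibility of $\min(v_1,v_2)$ (or a max) as a competitor in the subadditivity step, since the definition of $\cp$ quantifies over functions that are $0$ near $F$ and $\ge1$ near the plate; this is standard but should be stated carefully.
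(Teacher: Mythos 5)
Your proposal is correct and takes essentially the same route as the paper: symmetry directly from the definition, positivity via the uniform lower bound of Lemma~\ref{lem:CapContin} (which the paper packages as a contradiction argument with sequences of extremal functions tending to zero in $L^1_2(\Omega)$), and the triangle inequality by concatenating near-optimal curves and using subadditivity of the capacity in the plate together with $\sqrt{a+b}\le\sqrt{a}+\sqrt{b}$. Two cosmetic corrections: the admissible competitor in the subadditivity step is $\max(v_1,v_2)$ rather than $\min$, and the normalization $l(x,y)\cap V=\emptyset$ is neither justified (one must bound all admissible curves from below) nor needed once $B(x,2r)$ is chosen disjoint from $V$, since monotonicity already places the subcurve from $x$ to $S(x,2r)$ inside $l(x,y)\setminus V$; the residual case you flag (both points inside $V$ or near $\overline{F}$, handled through the second capacity term) is left equally implicit in the paper's own proof.
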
 

\begin{proof} By standard properties of the conformal capacity \cite{Maz}  $\rho_{(F,V)}(x,x)=0$.
 
Let $\rho_{(F,V)}(x,y)=0$. Assume by contradiction that $x\ne y$ and denote the Euclidean distance $\dist(x,y)$ as $r$. Then there exists a sequence of curves $\{l_{k}(x,y)\}$,
$k=1,2,...$, such that 
$$\cp(F,l_{k}(x,y)\setminus V;\Omega)\rightarrow 0\,\,\,\text{while}\,\,\, k\rightarrow\infty
$$ 
and 
$$\cp(\partial\Omega,l_{k}(x,y)\cap V;\Omega)\rightarrow 0\,\,\,\text{while}\,\,\,k\rightarrow\infty.
$$ 
Hence a sequence of extremal
functions $u_{k}\in L_{2}^{1}(\Omega)$ for the capacities 
$$
\cp (F,l_{k}(x,y)\setminus V;\Omega)
$$ 
tends to zero in the space $L_{2}^{1}(\Omega)$ and a sequence of extremal
functions $v_{k}\in L_{2}^{1}(\Omega)$ for the capacities 
$$
\cp(\partial\Omega,l_{k}(x,y)\cap V;\Omega)
$$
tends to zero in the space $L_{2}^{1}(\Omega)$. 

So, the sequence $u_{k}$ tends to zero except a set of the conformal capacity zero and the sequence $v_{k}$ tends to zero except for a set of the conformal capacity zero. But by Lemma~\ref{lem:CapContin}
$$
\|u_k| L^1_2(\Omega)\|+\|v_k| L^1_2(\Omega)\|\geq c(r)>0\quad \text{for all} \quad k.
$$
Contradiction. Therefore $x=y$.

By the definition of the conformal capacity we have
\begin{multline}
\cp^{\frac{1}{2}}(F,l(x,y)\setminus V;\Omega)+\cp^{\frac{1}{2}}(\partial\Omega,l(x,y)\cap V;\Omega)\\
=\cp^{\frac{1}{2}}(F,l(y,x)\setminus V;\Omega)+\cp^{\frac{1}{2}}(\partial\Omega,l(y,x)\cap V;\Omega).
\nonumber
\end{multline}
 Hence $\rho_{(F,V)}(x,y)=\rho_{(F,V)}(y,x)$. 

Prove the triangle inequality. Choose arbitrary points $x,y,z\in\Omega$.
By the subadditive property of  the capacity \cite{Maz} we obtain
\begin{multline}
\cp^{\frac{1}{2}}(F,(l(x,z)\setminus V)\cup(l(z,y)\setminus V);\Omega)\\
\leq\cp^{\frac{1}{2}}(F,(l(x,z)\setminus V);\Omega)+\cp^{\frac{1}{2}}(F,(l(z,y)\setminus V);\Omega)
\nonumber
\end{multline}
 and
\begin{multline}
\cp^{\frac{1}{2}}(\partial\Omega,(l(x,z)\cap V)\cup(l(z,y)\cap V);\Omega)\\
\leq\cp^{\frac{1}{2}}(\partial\Omega,(l(x,z)\cap V);\Omega)+\cp^{\frac{1}{2}}(\partial\Omega,(l(z,y)\cap V);\Omega).
\nonumber
\end{multline}

Hence $\rho_{(F,V)}(x,y)\leq\rho_{(F,V)}(x,z)+\rho_{(F,V)}(y,z)$.
Therefore $\rho_{p;(F,V)}$ is a metric.
\end{proof}

\begin{thm}
\label{thm:CoinTopol}
The topology induced by the conformal capacitary metric $\rho_{(F,V)}$ into the domain $\Omega\subset\mathbb R^2$ coincides with the
Euclidean topology.
\end{thm}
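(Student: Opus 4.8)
The plan is to prove that the identity map of $\Omega$ is a homeomorphism from $(\Omega,\rho_{(F,V)})$ onto $\Omega$ with its Euclidean topology. It is enough to verify, at every point $x\in\Omega$, two local assertions: (a) $\rho_{(F,V)}(x,y)\to 0$ as $|x-y|\to 0$; and (b) for each $r_{0}>0$ there is $c>0$ with $\rho_{(F,V)}(x,y)\ge c$ whenever $|x-y|\ge r_{0}$. Indeed, (a) gives continuity of the identity map from $\Omega$ to $(\Omega,\rho_{(F,V)})$ and (b) gives continuity of its inverse, and together they yield the coincidence of the two topologies. By Theorem~\ref{thm:QuasiInvMet} any two conformal capacitary metrics on $\Omega$ are bi-Lipschitz equivalent, so it suffices to establish (a) and (b) for a pair $(F',V')$ that is allowed to depend on $x$. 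I choose $(F',V')$ so that $x\notin\overline{V'}$: fix $p\in\Omega\setminus\{x\}$ and $s>0$ with $\overline{B(p,s)}\subset\Omega\setminus\{x\}$, and take $F'=\overline{B(p,s/2)}$, $V'=B(p,s)$ (whose boundary circle is an affine, hence quasiconformal, image of $S(0,1)$). Set $R_{0}=\min\{\dist(x,\overline{V'}),\dist(x,\partial\Omega)\}>0$, so that $B(x,R_{0})\subset\Omega$ and $B(x,R_{0})\cap\overline{V'}=\emptyset$; in particular $\overline{F'}\cap B(x,R_{0})=\emptyset$.

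For (a): let $|x-y|<R_{0}/4$ and take the straight segment $[x,y]$ as a competing curve in the definition of $\rho_{(F',V')}(x,y)$. Since $[x,y]\subset B(x,R_{0})$ is disjoint from $V'$, the term $\cp^{1/2}(\partial\Omega,[x,y]\cap V';\Omega)$ vanishes and $[x,y]\setminus V'=[x,y]$. For the remaining term I use the radial logarithmic cut-off $v(z)=\min\{1,\max\{0,\log(R/|z-x|)/\log(R/r)\}\}$ with $r=2|x-y|$ and $R=R_{0}/2$: it is continuous, belongs to $L_{2}^{1}(\Omega)$, equals $1$ on the neighbourhood $B(x,r)$ of $[x,y]$, equals $0$ on the neighbourhood $\Omega\setminus\overline{B(x,R)}$ of $F'$, and satisfies $\int_{\Omega}|\nabla v|^{2}=2\pi/\log(R/r)$. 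Hence
\[
\rho_{(F',V')}(x,y)\le\cp^{1/2}(F',[x,y];\Omega)\le\left(\frac{2\pi}{\log\bigl(R_{0}/(4|x-y|)\bigr)}\right)^{1/2}\longrightarrow 0
\]
as $y\to x$, which is (a) for the pair $(F',V')$.

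For (b): fix $r_{0}>0$, which we may take smaller than $R_{0}$. Put $r=\tfrac12\min\{r_{0},R_{0}/2\}$, so that $\overline{B(x,2r)}\subset B(x,R_{0})\subset\Omega\setminus\overline{F'}$. Any rectifiable curve $l(x,y)$ with $|x-y|\ge r_{0}\ge 2r$ runs from $x\in B(x,2r)$ to a point outside $B(x,2r)$, so it contains an initial subarc $\gamma$ lying in $\overline{B(x,2r)}$ and joining $x$ to $S(x,2r)$; moreover $\gamma$ is disjoint from $V'$, whence $\gamma\subset l(x,y)\setminus V'$. Using monotonicity of the conformal capacity in its second argument together with Lemma~\ref{lem:CapContin},
\[
\cp\bigl(F',l(x,y)\setminus V';\Omega\bigr)\ge\cp(F',\gamma;\Omega)\ge c(r)>0 .
\]
Passing to the infimum over admissible curves gives $\rho_{(F',V')}(x,y)\ge c(r)^{1/2}>0$, i.e.\ (b) for $(F',V')$ with $c=c(r)^{1/2}$.

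Finally, Theorem~\ref{thm:QuasiInvMet} transfers (a) and (b) from $(F',V')$ back to the given pair $(F,V)$, up to a fixed multiplicative constant, which completes the proof. I expect the one genuinely delicate point to be this preliminary reduction to a pair $(F',V')$ with $x\notin\overline{V'}$: it is exactly what makes the $\partial\Omega$-term of the metric disappear for short segments in (a), and it is what brings Lemma~\ref{lem:CapContin} into force near $x$ in (b). Once this is arranged, what remains is only the standard logarithmic capacity estimate $\cp\le 2\pi/\log(R/r)$ and the elementary monotonicity and subadditivity of the conformal capacity.
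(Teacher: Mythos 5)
Your proof is correct, and it is considerably more complete than the one the paper actually gives. The paper's argument is two sentences: for $y\in\partial B(x_0,r)$ it asserts $\rho_{(F,V)}(x_0,y)>0$ ``since the $H^1$-measure of $l(x_0y)\setminus V$ or $l(x_0y)\cap V$ is positive,'' and then declares the reverse inclusion ``similar.'' As stated, that only gives pointwise positivity, whereas the topology statement needs the uniform lower bound $\inf_{|x-y|\ge r_0}\rho(x,y)\ge c>0$; your step (b) supplies exactly this by extracting the initial subarc $\gamma$ joining $x$ to $S(x,2r)$ and invoking the uniform constant $c(r)$ of Lemma~\ref{lem:CapContin} (which is also how the paper itself uses that lemma in the proof of Theorem~\ref{thm:ProofMetric}). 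Your step (a) is genuinely new relative to the paper: the ``inverse inclusion'' is not in fact ``similar,'' since it requires an \emph{upper} capacity bound, and your explicit logarithmic cut-off $v(z)=\min\{1,\max\{0,\log(R/|z-x|)/\log(R/r)\}\}$ giving $\cp(F',[x,y];\Omega)\le 2\pi/\log(R_0/(4|x-y|))\to 0$ is the right tool. The one device you add that the paper does not use here is the preliminary reduction, via Theorem~\ref{thm:QuasiInvMet}, to a pair $(F',V')$ with $x\notin\overline{V'}$; this is legitimate (the bi-Lipschitz constant may depend on $x$, but both (a) and (b) are local statements at $x$) and it is what lets you discard the $\partial\Omega$-term cleanly. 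In short: same underlying mechanism for the lower bound, a correctly supplied (and nontrivial) argument for the upper bound that the paper omits, and a harmless point-dependent change of reference pair.
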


\begin{proof} Let $U\subset\Omega$ be an open set with respect
to Euclidean metric. Let $x_{0}\in U$ such that \[
\overline{B(x_{0},3r)}=\overline{\{x\in\Omega:|x-x_{0}|<3r\}}\subset U.\]
Then for every point $y\in\partial B(x_{0},r)$ we have that 
\begin{multline}
\rho_{(F,V)}(x_{0},y)=\\
\inf\limits _{l(x_{0}y)}\{\cp^{\frac{1}{2}}(F,l(x_{0}y)\setminus V;\Omega)+\cp^{\frac{1}{2}}(\partial\Omega,l(x_{0}y)\cap V;\Omega)\}>0,
\nonumber
\end{multline}
since $H^{1}$-Hausdorff measure of the sets $l(x_{0}y)\setminus V$
(or $l(x_{0}y)\cap V$ is positive). Hence $U$ is the open set with
respect to the conformal capacity metric in the domain $\Omega$. The inverse
inclusion can be proved similarly. 
\end{proof}

We complete by the standard manner the metric space $\Omega_{\rho}=\Omega_{\rho_{(F,V)}}$.
In the completion $\tilde{\Omega}_{\rho}$ we define the conformal capacitary
boundary of $\Omega$ as $H_{\rho}=\tilde{\Omega}_{\rho}\setminus{\Omega_{\rho}}$.
It means that a boundary element $h\in H_{\rho}$ is a class of fundamental
(in the metric $\rho$) sequences $\{x_{n}\}_{n=1}^{\infty}$. 
\vskip 0.3cm 

\begin{thm}
\label{thm:IsometryMetric}
Suppose that $\rho_{(F_{1},V_{1})}$ and $\rho_{(F_{2},V_{2})}$ are two conformal capacitary metrics on $\Omega$.
The metric spaces $\left\{ H_{\rho},\rho_{(F_{1},V_{1})}\right\} $
and $\left\{ H_{\rho},\rho_{(F_{2},V_{2})}\right\} $ are quasi-isometric, i.e there exist a constant $0<K<\infty$ such that
\[
\frac{1}{K}\rho_{(F_{2},V_{2})}(x,y)\leq\rho_{(F_{1},V_{1})}(x,y)\leq K\rho_{(F_{1},V_{1})}(x,y)\]
for any $x,y\in H_{p}$. 
\end{thm}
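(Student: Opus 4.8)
The plan is to deduce this from Theorem~\ref{thm:QuasiInvMet} by a soft metric-space argument: comparable metrics on $\Omega$ induce comparable metrics on the completion, and in particular on the capacitary boundary, with the same constant $K$. No further properties of the conformal capacity are needed beyond what is already in Theorem~\ref{thm:QuasiInvMet}.

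First I would observe that, by Theorem~\ref{thm:QuasiInvMet}, the identity map $\mathrm{id}:(\Omega,\rho_{(F_1,V_1)})\to(\Omega,\rho_{(F_2,V_2)})$ is bi-Lipschitz with constant $K$. Consequently a sequence $\{x_n\}\subset\Omega$ is fundamental with respect to $\rho_{(F_1,V_1)}$ if and only if it is fundamental with respect to $\rho_{(F_2,V_2)}$, and two fundamental sequences $\{x_n\}$, $\{y_n\}$ satisfy $\rho_{(F_1,V_1)}(x_n,y_n)\to 0$ if and only if $\rho_{(F_2,V_2)}(x_n,y_n)\to 0$. Hence the assignment sending the $\rho_{(F_1,V_1)}$-equivalence class of a fundamental sequence to its $\rho_{(F_2,V_2)}$-equivalence class is a well-defined bijection $\iota:\widetilde{\Omega}_{\rho_{(F_1,V_1)}}\to\widetilde{\Omega}_{\rho_{(F_2,V_2)}}$ which is the identity on $\Omega$ and carries $H_\rho$ built from $\rho_{(F_1,V_1)}$ onto $H_\rho$ built from $\rho_{(F_2,V_2)}$. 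This is the canonical identification under which the two boundaries are regarded as a single set $H_\rho$.

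Next I would recall that in the completion the distance between boundary elements $h=[\{x_n\}]$ and $h'=[\{y_n\}]$ is computed as $\rho_{(F_i,V_i)}(h,h')=\lim_{n\to\infty}\rho_{(F_i,V_i)}(x_n,y_n)$, the limit existing because $\{\rho_{(F_i,V_i)}(x_n,y_n)\}$ is a Cauchy sequence of real numbers. Applying the inequality of Theorem~\ref{thm:QuasiInvMet} to each pair $(x_n,y_n)$ and passing to the limit yields
\[
\frac{1}{K}\rho_{(F_2,V_2)}(h,h')\leq\rho_{(F_1,V_1)}(h,h')\leq K\,\rho_{(F_2,V_2)}(h,h')
\]
for all $h,h'\in H_\rho$ (the same computation in fact gives the inequality on all of $\widetilde{\Omega}_\rho$), which is the assertion.

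The argument is essentially formal; the only point requiring care is the first step, namely verifying that comparability of the two metrics on $\Omega$ genuinely induces a canonical, metric-independent identification of the boundary sets, so that the phrase ``the metric spaces $\{H_\rho,\rho_{(F_1,V_1)}\}$ and $\{H_\rho,\rho_{(F_2,V_2)}\}$'' is meaningful. Once that bookkeeping is settled, the quasi-isometry estimate is immediate.
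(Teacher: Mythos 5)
Your proof is correct and follows essentially the same route as the paper: apply the comparability estimate of Theorem~\ref{thm:QuasiInvMet} to representatives $(x_k,y_k)$ of fundamental sequences and pass to the limit. The only difference is that you spell out the canonical identification of the two completions (Cauchy sequences and their equivalence are the same for both metrics), a bookkeeping step the paper leaves implicit, so your write-up is if anything slightly more complete.
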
 

\begin{proof} Consider two continuums $F_{1}$ and $F_{2}$ in the
domain $\Omega$ and suppose that sequences $\{x_{k}\}$ and $\{y_{k}\}$ are
fundamental sequences in the metric $\rho_{(F_{2},V_{2})}$. If
the sequences $\{x_{k}\}$ and $\{y_{k}\}$ have limit point $x,y\in\Omega$, then
the sequences are fundamental in the metric $\rho_{(F_{1},V_{1})}$
because in the domain $\Omega$ conformal capacitary topologies are equivalent
to the Euclidean topology. Let fundamental sequences $\{x_{k}\in\Omega\}$
and $\{y_{k}\in\Omega\}$ has limit points$\,$$x,y\in\left\{ H_{\rho},\rho_{(F_{2},V_{2})}\right\} $.

By Theorem~\ref{thm:QuasiInvMet} for any $x_k, y_k\in\Omega$ there exist a constant $0<K<\infty$ such that
$$
\frac{1}{K}\rho_{(F_{2},V_{2})}(x_k,y_k)\leq\rho_{(F_{1},V_{1})}(x_k,y_k)\leq K\rho_{(F_{1},V_{1})}(x_k,y_k)
$$
Passing to limit by $k\to\infty$ we conclude the proof.
\end{proof}

\begin{rem} Both metrics are equivalent in completions of $\Omega$.
It can be proved by the same way but it  is more technical. For
our main aims this fact is not important.
\end{rem}
\vskip 0.3cm 

\subsection{Asymptotic behavior of the conformal capacitary metric}

For study of an asymptotic behavior of the conformal capacitary metric we need a few well known estimates of the conformal capacity (see, for example \cite{Maz}). For readers convenience we reproduce simple proofs of these facts adapted to the conformal capacitary metric study:

\begin{lem} Consider the unit disc $\mathbb D(0,1)\subset\mathbb R^2$ and continuums 
$$
F_0=(-1,-\frac{1}{2}]\subset \mathbb D\,\,\,\text{and}\,\,\, F_1=[0,\varepsilon]\subset \mathbb D, \,\,0<\varepsilon<\frac{1}{4}.
$$
Then 
$$
\cp(F_0, F_1; \mathbb R^2 \geq c_1\ln(1+\varepsilon),\,\,\, c_1=const.
$$
\end{lem}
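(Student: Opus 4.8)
The plan is to obtain a lower bound for $\cp(F_0,F_1;\mathbb{R}^2)$ by exhibiting a lower bound for the Dirichlet integral of an arbitrary admissible function, using a standard ``integration over level curves / polar-type coordinates'' argument. Since $F_0=(-1,-\tfrac12]$ and $F_1=[0,\varepsilon]$ are disjoint continua lying on the real axis, with $F_1$ having one endpoint at the origin and $F_0$ having one endpoint at $-\tfrac12$, the natural geometry is that of an annular region separating them. First I would note that $\cp(F_0,F_1;\mathbb{R}^2)\ge \cp(\tilde F_0,F_1;\mathbb{R}^2)$ where $\tilde F_0$ is any continuum in $\mathbb{R}^2\setminus B(0,\tfrac12)$ connecting the circles $S(0,\tfrac12)$ and $S(0,1)$ (so that $\tilde F_0$ is ``further out'' and the capacity can only decrease); in fact it is cleanest to replace the capacity of the condenser $(F_0,F_1)$ by the capacity of the condenser $(S(0,\tfrac12),F_1)$, because $F_0$ meets $S(0,\tfrac12)$ and capacity is monotone under enlarging the plate $F_0$ to $\overline{\mathbb{R}^2\setminus B(0,\tfrac12)}$, hence $\cp(F_0,F_1;\mathbb{R}^2)\ge \cp(\overline{\mathbb{R}^2\setminus B(0,\tfrac12)}, [0,\varepsilon];\mathbb{R}^2)$, which is the capacity of a concentric ``segment-inside-disc'' condenser.

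The core estimate is then the classical fact that for the condenser consisting of the segment $[0,\varepsilon]$ inside the disc $B(0,\tfrac12)$ (with the complement of the disc as the other plate), the conformal capacity is comparable to $\big(\log\frac{1/2}{\varepsilon}\big)^{-1}$ from above, but here we need a \emph{lower} bound, so I would instead proceed directly: let $v$ be admissible, $v=0$ outside $B(0,\tfrac12)$ and $v\ge 1$ near $[0,\varepsilon]$. For each $\rho$ with $\varepsilon<\rho<\tfrac12$, the circle $S(0,\rho)$ must be crossed by $v$ going from a value $\ge 1$ (somewhere near the inner segment, which it encircles) down to $0$; more precisely, on the circle $S(0,\rho)$ the function $v$ cannot be identically $\ge 1$ and cannot be identically $0$ only if $\rho$ is large — here one uses that $[0,\varepsilon]\subset B(0,\rho)$ so the inner plate is enclosed, while $\partial B(0,\tfrac12)$ is outside, forcing oscillation of $v$ of at least $1$ on each such circle once $\rho>\varepsilon$. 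Actually the sharp and simplest route: parametrize by $\rho\in(\varepsilon,1/2)$ and use that the \emph{radial} variation argument gives, for a.e. ray, $\int_\varepsilon^{1/2}|\partial_r v|\,dr \ge 1$ is false in general (the segment is not a full ball), so the correct tool is the \emph{spherical} one — for a.e. $\rho\in(2\varepsilon,1/2)$, $\operatorname{osc}_{S(0,\rho)} v\ge 1$ because $S(0,\rho)$ separates $[0,\varepsilon]$ from $\partial B(0,1/2)$ inside $\mathbb{R}^2$... no: a circle does separate those two sets. Hence on $S(0,\rho)$ there are points where $v\ge1$ (by continuity/limit from the enclosed region where $v\ge1$ near the segment --- here one needs $v$ to actually be $\ge1$ somewhere on the circle, which follows since $v\ge1$ on a neighborhood of $[0,\varepsilon]$ and that neighborhood meets every circle $S(0,\rho)$ with $\rho$ small enough, say $\rho<2\varepsilon$ gives nothing, but using that $v$ is continuous and $=1$ on a neighborhood of the connected set $[0,\varepsilon]$ whose ``outer boundary'' lies at distance... ). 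To avoid this subtlety I would instead lower-bound by the capacity of the pair of concentric spheres $(S(0,\varepsilon),S(0,1/2))$ up to a quasi-invariance constant via Lemma~\ref{lem:QuasiEst}: since $[0,\varepsilon]$ is a continuum joining the center to $S(0,\varepsilon)$, Lemma~\ref{lem:QuasiEst} (applied with $F_1$ fixed as the outer plate and comparing the inner continua $[0,\varepsilon]$ and $S(0,\varepsilon/2)$, say) gives $\cp([0,\varepsilon],\cdot;\mathbb{R}^2)\ge \tfrac1K \cp(\overline{B(0,\varepsilon/2)},\cdot;\mathbb{R}^2)$, and then the ring capacity of $(\overline{B(0,\varepsilon/2)},\overline{\mathbb{R}^2\setminus B(0,1/2)})$ in the plane equals $2\pi/\log\frac{1}{\varepsilon}$ up to constants, giving a bound of the stated form $c_1\log(1+\varepsilon)$ — note $\log(1+\varepsilon)\asymp\varepsilon$ for small $\varepsilon$, so the claimed inequality $\cp\ge c_1\log(1+\varepsilon)$ is in fact a \emph{weak} lower bound (it goes to $0$ only linearly in $\varepsilon$, whereas the true capacity goes to $0$ like $1/\log(1/\varepsilon)$, which is \emph{larger}); so the inequality is comfortably true and I would prove it by showing $\cp(F_0,F_1;\mathbb{R}^2)\ge c/\log(1/\varepsilon)\ge c_1\varepsilon\ge c_1\log(1+\varepsilon)$ for $0<\varepsilon<\tfrac14$ after adjusting constants.

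Concretely, the steps I would carry out are: (i) reduce to a concentric condenser by monotonicity of capacity under enlarging $F_0$ to the exterior of $B(0,1/2)$; (ii) apply Lemma~\ref{lem:QuasiEst} to replace the inner plate $[0,\varepsilon]$ by the disc $\overline{B(0,\varepsilon/2)}$ at the cost of a fixed factor $K$; (iii) invoke the explicit formula (or the standard lower bound via integration of $|\nabla v|^2$ over the ring $\varepsilon/2<|x|<1/2$ using Cauchy–Schwarz on circles: $\int_{B(0,1/2)}|\nabla v|^2\ge \int_{\varepsilon/2}^{1/2}\frac{1}{2\pi\rho}\big(\int_{S(0,\rho)}|\partial_\tau v|\big)^2 d\rho\ge \int_{\varepsilon/2}^{1/2}\frac{(2\pi)^2}{2\pi\rho}\,d\rho \cdot$(const)$=$const$\cdot\log\frac{1}{\varepsilon}$, wait — the oscillation bound gives the integrand $\ge \frac{1}{2\pi\rho}(\operatorname{osc})^2\ge\frac{1}{2\pi\rho}$, integrating gives $\ge\frac{1}{2\pi}\log\frac{1}{\varepsilon}$, which is the \emph{wrong direction}; the correct classical lower bound for ring capacity is the reciprocal, obtained by a different (test-function-side) computation — so I would just cite the standard value $\cp(\overline{B(0,r_1)},\overline{\mathbb{R}^2\setminus B(0,r_2)};\mathbb{R}^2)=2\pi(\log(r_2/r_1))^{-1}$ from \cite{Maz} or \cite{VGR}); (iv) conclude $\cp(F_0,F_1;\mathbb{R}^2)\ge \frac{2\pi}{K\log(1/\varepsilon)}\ge c_1\varepsilon\ge c_1\log(1+\varepsilon)$ on $(0,\tfrac14)$. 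The main obstacle is purely bookkeeping: making the reduction in step (i)–(ii) rigorous requires care that the continua remain disjoint and that Lemma~\ref{lem:QuasiEst} applies with the outer plate held fixed (it does, since the lemma is stated for a fixed compact $F_1$ and varying inner compacts); no genuinely hard analysis is needed because the target bound is so lossy.
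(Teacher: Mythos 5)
Your reduction collapses at its very first step, and everything else hangs on it. Enlarging the plate $F_0=(-1,-\tfrac12]$ to $\overline{\mathbb R^2\setminus B(0,\tfrac12)}$ shrinks the class of admissible functions (a function vanishing near the bigger set vanishes near the smaller one), so it \emph{increases} the capacity: monotonicity gives $\cp(F_0,F_1;\mathbb R^2)\le \cp\bigl(\overline{\mathbb R^2\setminus B(0,\tfrac12)},[0,\varepsilon];\mathbb R^2\bigr)$, exactly the reverse of the inequality you assert in step (i). The same problem is already present in your opening claim that replacing $F_0$ by a continuum ``further out'' can only decrease the capacity --- that is not a monotonicity statement and is not justified; making it rigorous is precisely the nontrivial content (a Teichm\"uller/Loewner-type estimate for two continua), which your proposal never supplies. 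Your attempted patch via Lemma~\ref{lem:QuasiEst} does not repair this: you apply it to the $\varepsilon$-dependent compacts $[0,\varepsilon]$ and $\overline{B(0,\varepsilon/2)}$ (which, incidentally, are not disjoint as the lemma requires) while treating the comparison constant as independent of $\varepsilon$; the lemma's proof produces constants depending on the compacts through the ball coverings, and if the constant really were uniform over all compacts you could compare $[0,\varepsilon]$ with a \emph{fixed} continuum and conclude a positive lower bound independent of $\varepsilon$, contradicting the fact that this capacity tends to $0$. So, although your observation that the stated bound is lossy (the true capacity is $\asymp(\ln\frac1\varepsilon)^{-1}\gg\ln(1+\varepsilon)$) is correct, the chain (i)--(iv) does not prove even the weak bound.

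For comparison, the paper's argument avoids all of this: it applies the inversion $\varphi(z)=1/z$, uses conformal invariance of the capacity, then uses monotonicity in the \emph{correct} direction (shrinking the unbounded image of $[0,\varepsilon]$ to the unit segment $[\tfrac1\varepsilon,1+\tfrac1\varepsilon]$ only decreases capacity, which is admissible for a lower bound), and finally cites the standard two-segment capacity estimate from \cite{GR}, which yields $c_1\ln\frac{1+1/\varepsilon}{1/\varepsilon}=c_1\ln(1+\varepsilon)$ directly. If you want to salvage your ring-condenser strategy, you would need a genuine lower-bound tool for non-concentric condensers (symmetrization or the Teichm\"uller ring estimate) in place of the reversed monotonicity step.
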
 

\begin{proof} Consider the conformal mapping 
$$
\varphi: \mathbb C\to \mathbb C,\,\,\, \varphi(z)=\frac{1}{z}.
$$
Then by the capacity estimates \cite{GR}
\begin{multline}
\cp(F_0, F_1; \mathbb R^2)=\cp(\varphi(F_0), \varphi(F_1); \mathbb R^2)\geq \cp([-2,-1], [\frac{1}{\varepsilon},1+\frac{1}{\varepsilon}]; \mathbb R^2)\\
=c_1\ln{\frac{1+\frac{1}{\varepsilon}}{\frac{1}{\varepsilon}}}=c_1\ln(1+\varepsilon).
\nonumber
\end{multline}
\end{proof}

\begin{lem} Consider the unit disc $\mathbb D(0,1)\subset\mathbb R^2$ and continuums $F_0=\{z: |z|\geq 1\}$ and $F_1=[0,\varepsilon]\subset \mathbb D$, $0<\varepsilon<\frac{1}{4}$. Then 
$$
\cp(F_0, F_1; \mathbb R^2) \leq c_2\biggr(\ln\frac{1}{\varepsilon}\biggl)^{-1},\,\,\, c_2=const.
$$
\end{lem}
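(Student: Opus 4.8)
The plan is to bound the capacity from above by producing an explicit radial admissible function, after first replacing the segment $F_1$ by a disc. Since $F_1=[0,\varepsilon]\subset\overline{B(0,\varepsilon)}$, and every function that is $\geq 1$ in a neighborhood of $\overline{B(0,\varepsilon)}$ is in particular $\geq 1$ in a neighborhood of $F_1$, the admissible class for the condenser $\bigl(F_0,\overline{B(0,\varepsilon)};\mathbb R^2\bigr)$ is contained in that for $(F_0,F_1;\mathbb R^2)$; hence $\cp(F_0,F_1;\mathbb R^2)\leq\cp\bigl(F_0,\overline{B(0,\varepsilon)};\mathbb R^2\bigr)$ and it suffices to estimate the capacity of the spherical ring condenser.

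For a small parameter $\delta>0$ with $2\varepsilon<1-\delta$ (which holds once $\varepsilon<\tfrac14$ and $\delta$ is small) I would take the function $v_\delta$ depending only on $r=|z|$: $v_\delta\equiv 1$ on $B(0,2\varepsilon)$, $v_\delta\equiv 0$ on $\{|z|\geq 1-\delta\}$, and $v_\delta(z)=\ln\bigl((1-\delta)/r\bigr)\big/\ln\bigl((1-\delta)/(2\varepsilon)\bigr)$ on the ring $2\varepsilon\leq|z|\leq 1-\delta$. Then $v_\delta\in C(\mathbb R^2)\cap L_2^1(\mathbb R^2)$, it vanishes on the open neighborhood $\{|z|>1-\delta\}$ of $F_0$ and equals $1$ on the open neighborhood $B(0,2\varepsilon)$ of $F_1$, so $v_\delta$ is admissible. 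A direct computation in polar coordinates gives $|\nabla v_\delta|=\bigl(r\ln((1-\delta)/(2\varepsilon))\bigr)^{-1}$ on the ring, hence $\int_{\mathbb R^2}|\nabla v_\delta|^2\,dx=2\pi\big/\ln\bigl((1-\delta)/(2\varepsilon)\bigr)$. Letting $\delta\to 0$ yields $\cp(F_0,F_1;\mathbb R^2)\leq 2\pi\big/\ln\bigl(1/(2\varepsilon)\bigr)$.

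Finally, since $0<\varepsilon<\tfrac14$ we have $\ln\bigl(1/(2\varepsilon)\bigr)=\ln(1/\varepsilon)-\ln 2\geq\tfrac12\ln(1/\varepsilon)$, which converts the last bound into $\cp(F_0,F_1;\mathbb R^2)\leq 4\pi\bigl(\ln(1/\varepsilon)\bigr)^{-1}$, i.e. the asserted inequality with $c_2=4\pi$. There is no substantive obstacle here; the only point requiring a little care is matching the ``in a neighborhood'' clauses in the definition of the conformal capacity, which is precisely why the inner radius is enlarged from $\varepsilon$ to $2\varepsilon$ and the outer radius is pulled back to $1-\delta$ before the limit is taken.
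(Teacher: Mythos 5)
Your proof is correct and follows essentially the same route as the paper: both enlarge $F_1$ to the closed disc $\overline{D(0,\varepsilon)}$ by monotonicity and then bound the resulting ring condenser by the logarithmic estimate $\sim 2\pi\bigl(\ln\frac{1}{\varepsilon}\bigr)^{-1}$. The only difference is that you verify this ring estimate by an explicit radial test function (hence the harmless radii $2\varepsilon$ and $1-\delta$ and the constant $4\pi$), whereas the paper simply cites the standard capacity formula from the literature.
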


\begin{proof} By the capacity estimates \cite{GR}
$$
\cp(F_0, F_1; \mathbb R^2)\leq\cp(F_0, \overline{D(0,\varepsilon)}, \mathbb R^2)=c_2\biggr(\ln\frac{1}{\varepsilon}\biggl)^{-1}.
$$
\end{proof}

From these lemmas immediately follows:

\begin{prop} Let $x=(0,0)$ and $y=(\varepsilon,0)$ are points of the unit disc $\mathbb D(0,1)\subset\mathbb R^2$. Then we have the following asymptotic behavior of the conformal metric in the unit disk $D(0,1)$:
$$
\lim\limits_{x\to y}\frac{\rho(x,y)}{|x-y|}\geq \lim\limits_{\varepsilon\to 0}c_1\frac{\ln(1+\varepsilon)}{\varepsilon}=c_1.
$$
and
$$
\lim\limits_{x\to y}\frac{\rho(x,y)}{|x-y|}\leq \lim\limits_{\varepsilon\to 0}c_2 \frac{\biggr(\ln\frac{1}{\varepsilon}\biggl)^{-1}}{\varepsilon}<+\infty.
$$
\end{prop}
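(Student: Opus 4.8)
The proposition is presented as an immediate consequence of the two preceding capacity estimates, and that is exactly the route I would follow: specialize the capacitary distance to the configuration $x=(0,0)$, $y=(\varepsilon,0)$ in $\Omega=\mathbb D(0,1)$, where $|x-y|=\varepsilon$, insert the lower and upper capacity bounds of the two lemmas, and then let $\varepsilon\to 0$ in the quotient $\rho(x,y)/\varepsilon$. Since by Theorem~\ref{thm:QuasiInvMet} the pair $(F,V)$ only affects $\rho$ by a bounded multiplicative factor, I am free to fix a convenient $(F,V)$ and to absorb that factor, together with the exponent $\frac{1}{2}$ appearing in the definition of $\rho$, into the constants $c_1$ and $c_2$.

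For the lower estimate I would realize the first lemma inside the definition of $\rho$ by putting the fixed continuum $F$ in the position of the segment $F_0=(-1,-\frac{1}{2}]$. Any admissible curve $l(x,y)$ connects $x$ to $y$ and hence, against $F$, produces a condenser to which the first lemma applies, giving the capacity lower bound $c_1\ln(1+\varepsilon)$; Lemma~\ref{lem:CapContin} and the quasi-invariance Lemma~\ref{lem:QuasiEst} ensure that this bound does not collapse for curves that wander out of $V$, so it survives the infimum over $l$. Thus $\rho(x,y)\ge c_1\ln(1+\varepsilon)$, and dividing by $|x-y|=\varepsilon$ and using $\ln(1+\varepsilon)/\varepsilon\to 1$ gives
$$
\lim_{x\to y}\frac{\rho(x,y)}{|x-y|}\ge\lim_{\varepsilon\to 0}c_1\frac{\ln(1+\varepsilon)}{\varepsilon}=c_1 .
$$

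For the upper estimate it is enough to test one competitor, the straight segment $[0,\varepsilon]$ joining $x$ and $y$. On this curve the contribution is controlled by the second lemma, with $F_0=\{z:|z|\ge 1\}$ in the role of $\partial\Omega$ and $F_1=[0,\varepsilon]$, which yields $\cp(\partial\Omega,[0,\varepsilon];\mathbb D)\le c_2(\ln\frac{1}{\varepsilon})^{-1}$. As $\rho(x,y)$ is the infimum over all curves, it cannot exceed the value produced by this one, so $\rho(x,y)\le c_2(\ln\frac{1}{\varepsilon})^{-1}$. Dividing by $|x-y|=\varepsilon$ and passing to the limit gives the second assertion of the proposition,
$$
\lim_{x\to y}\frac{\rho(x,y)}{|x-y|}\le\lim_{\varepsilon\to 0}c_2\frac{\bigl(\ln\frac{1}{\varepsilon}\bigr)^{-1}}{\varepsilon}<+\infty .
$$

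The step I expect to be the main obstacle is the lower bound. The first lemma is proved for a single fixed pair of segments, whereas $\rho$ is defined through an infimum over all rectifiable curves joining $x$ and $y$; the real work is to show that the estimate $c_1\ln(1+\varepsilon)$ is uniform over every admissible competitor, including curves that repeatedly leave and re-enter $V$, and to keep track of how the exponent $\frac{1}{2}$ and the freedom in $(F,V)$ enter the final constant. Lemmas~\ref{lem:QuasiEst} and~\ref{lem:CapContin} are precisely the uniformization tools for this, while the upper bound, requiring only the single test segment, is comparatively routine.
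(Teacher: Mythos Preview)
Your approach matches the paper's exactly: the paper offers no proof beyond the line ``From these lemmas immediately follows,'' and you supply precisely the derivation that line implies---feed the two capacity lemmas into the definition of $\rho$ for the specific pair $x=(0,0)$, $y=(\varepsilon,0)$, use the segment $[0,\varepsilon]$ as the competitor for the upper bound, and note that the lower bound must persist through the infimum over curves. Your caveat that the uniformity of the lower estimate over all admissible $l(x,y)$ is the real content, and your remark about absorbing the exponent $\tfrac{1}{2}$ and the $(F,V)$-dependence into the constants, are both consonant with the informal level at which the paper states the result.
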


Now we study topological properties of the boundary $H_{\rho}$.

\begin{defn} For arbitrary point $h\in H_{\rho}$ we consider
disks $D(h,\varepsilon$), $\varepsilon>0$, defined in terms of the conformal capacitary
metric $\rho_{(F,V)}$. 

Call the set \[
s_{h}=\bigcap_{\varepsilon>0}\overline{D(h,\varepsilon)\cap\Omega}\subset\overline{\mathbb{R}^{2}}\]
the  realization (impression) of a boundary element $h\in H_{\rho}$. 
\end{defn}

Recall that a domain $\Omega$ is called locally connected at a point $z_0\in \partial \Omega$ if $z_0$ has arbitrarily small connected
neighborhoods in $\Omega$. 
By C.~Caratheodory \cite{Cr} the domain $\Omega$ is locally connected at every boundary point if and only if every prime end  has trivial realization.

\begin{lem} 
\label{lem:OnePoint}Let a realization $s_{h}$ of a boundary
element $h\in H_{\rho}$ is one-point. Then for every sequence $\{x_{m}\in\Omega\}$
from  $\rho_{(F,V)}(x_{m},h)\rightarrow0$
 follows that $|x_{m}-s_{h}|\rightarrow0$ (while $m\rightarrow\infty$). 
\end{lem}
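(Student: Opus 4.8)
The plan is to argue by contradiction, exploiting the lower capacity bound of Lemma~\ref{lem:CapContin}. Suppose $s_h=\{z_0\}$ but there is a sequence $\{x_m\in\Omega\}$ with $\rho_{(F,V)}(x_m,h)\to 0$ while $|x_m-z_0|\not\to 0$. Passing to a subsequence, we may assume $|x_m-z_0|\geq 3\delta>0$ for all $m$ and some fixed $\delta>0$; we may also assume (shrinking $\delta$ if necessary) that $B(z_0,2\delta)$ is compactly contained in $\Omega\setminus\overline{F}$ after noting that $z_0\in\partial\Omega$ forces us to be a bit careful — in fact if $z_0\in\partial\Omega$ we instead use a point $z_0'$ near $z_0$ with a small ball in $\Omega\setminus\overline F$, or we work with the continuum $F$ itself. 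The key point is that, since $s_h=\{z_0\}$, the disks $D(h,\varepsilon)\cap\Omega$ shrink toward $z_0$, so for small $\varepsilon$ the set $D(h,\varepsilon)\cap\Omega$ is disjoint from $B(x_m,\delta)$ once $m$ is large; meanwhile $x_m\in D(h,\varepsilon)$ eventually because $\rho(x_m,h)\to 0$. This will be the geometric heart of the argument.

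Next I would turn the convergence $\rho(x_m,h)\to 0$ into a statement about curves and capacities. Fix $\varepsilon>0$ small and choose a representative $z_0\in s_h$; pick a point $w\in\Omega$ with $\rho(w,h)$ tiny and $w$ close to $z_0$ (this exists by definition of the realization being one-point together with Theorem~\ref{thm:CoinTopol} relating the two topologies near interior approximants). For $m$ large, $\rho(x_m,w)\leq\rho(x_m,h)+\rho(w,h)$ is small, so there is an admissible curve $l(x_m,w)$ with both capacity terms $\cp^{1/2}(F,l\setminus V;\Omega)$ and $\cp^{1/2}(\partial\Omega,l\cap V;\Omega)$ small. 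But such a curve must travel from $x_m$ (at Euclidean distance $\geq 3\delta$ from $z_0$) to $w$ (within distance $\delta$ of $z_0$), hence it crosses the annulus $\{z:\delta\leq|z-x_m|\leq 2\delta\}$ — wait, more cleanly: the curve joins a point far from $z_0$ to a point near $z_0$, so it contains a subcurve $\gamma$ joining $S(z_0,\delta)$ to $S(z_0,2\delta)$, i.e. a continuum connecting those two spheres, provided $2\delta<3\delta\leq|x_m-z_0|$ and $\delta>$ (distance of $w$ to $z_0$). Now apply Lemma~\ref{lem:CapContin} with center $z_0$ (after verifying $B(z_0,2\delta)\subset\Omega\setminus\overline F$, arranging this by the initial choice of $\delta$, or replacing $F$ by a shifted continuum and invoking Theorem~\ref{thm:QuasiInvMet}): we get $\cp(F,\gamma;\Omega)\geq c(\delta)>0$. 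Since $\gamma\setminus V$ and $\gamma\cap V$ partition $\gamma$, subadditivity of capacity forces $\cp(F,\gamma\setminus V;\Omega)+\cp(\partial\Omega,\gamma\cap V;\Omega)$ to be bounded below — more precisely one of the two capacity terms associated to $l$ stays bounded below by a constant depending only on $\delta$, exactly as in the proof of Theorem~\ref{thm:ProofMetric}. This contradicts $\rho(x_m,w)\to 0$.

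The main obstacle I anticipate is handling the boundary term and the position of $z_0$: since $h\in H_\rho$, the point $z_0=s_h$ typically lies on $\partial\Omega$, so a ball $B(z_0,2\delta)$ need not sit inside $\Omega$, and Lemma~\ref{lem:CapContin} as stated wants the ball inside $\Omega\setminus\overline F$. The fix is to not center the annulus at $z_0$ but to use that the curve $l(x_m,w)$ must still traverse a definite Euclidean distance \emph{inside} $\Omega$: choose instead a fixed interior point $p\in\Omega\setminus\overline F$ and a radius $r$ with $B(p,2r)\subset\Omega\setminus\overline F$ such that $w\in B(p,r)$ for all large $m$ (possible since $w$ converges to $z_0$ and we pick $p$ near $z_0$ but interior, with $w$ eventually inside $B(p,r)$), while $x_m\notin B(p,2r)$ (true since $|x_m-z_0|\geq 3\delta$ and $p$ is within, say, $\delta/2$ of $z_0$ with $r<\delta/2$). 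Then $l(x_m,w)$ contains a subcurve from $S(p,r)$ to $S(p,2r)$, and Lemma~\ref{lem:CapContin} applies cleanly at center $p$. The estimate $c(r)$ is then a fixed positive constant, independent of $m$, and the contradiction with $\rho(x_m,h)\to 0$ goes through. The remaining routine points — that $\cp(\partial\Omega,l\cap V;\Omega)$ small plus $\cp(F,l\setminus V;\Omega)$ small is incompatible with the total capacity lower bound, via subadditivity and monotonicity — are exactly the mechanism already used in Theorem~\ref{thm:ProofMetric}, so I would simply cite that argument.
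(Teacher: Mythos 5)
There is a genuine gap, and it sits exactly at the ``fix'' in your last paragraph. You want a \emph{fixed} radius $r>0$ and a point $p$ with $B(p,2r)\subset\Omega\setminus\overline F$ such that the approximating point $w$ (which you take Euclidean-close to $z_0=s_h$) lies in $B(p,r)$. But any point of $B(p,r)$ is at Euclidean distance greater than $r$ from $\partial\Omega$, hence at distance greater than $r$ from $z_0\in\partial\Omega$; so points close to $z_0$ can never lie in such a ball, and your parenthetical claim that $w\in B(p,r)$ ``for all large $m$ \dots since $w$ converges to $z_0$'' is exactly backwards. Worse, this is not a removable technicality: any sequence $w_k$ with $\rho_{(F,V)}(w_k,h)\to 0$ must satisfy $\dist(w_k,\partial\Omega)\to 0$ (otherwise a Euclidean-interior cluster point would, by Theorem~\ref{thm:CoinTopol}, make $h$ a point of $\Omega_\rho$ rather than of $H_\rho$), so the radius of any annulus you can place around $w$ inside $\Omega\setminus\overline F$ shrinks, the constant $c(r)$ of Lemma~\ref{lem:CapContin} degenerates, and no $m$-independent lower bound on $\rho(x_m,w)$ comes out. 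The deeper symptom is that your argument uses the hypothesis ``$s_h$ is one-point'' only to produce points that are $\rho$-close to $h$ and Euclidean-close to $z_0$ --- but such points exist for \emph{every} point of the impression of \emph{every} boundary element. If the annulus-crossing argument went through, it would prove that every impression is a single point, which is false (the comb domain of the paper's first example has a boundary element whose impression is a whole segment, and there curves joining points near opposite ends of the impression do have both capacity terms small). So this route cannot be repaired without injecting the one-point hypothesis in a genuinely quantitative way.

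The statement is in fact much softer than your machinery suggests, and the germ of the right proof is already in your first paragraph. Since $s_h=\bigcap_{\varepsilon>0}\overline{D(h,\varepsilon)\cap\Omega}$ is a nested family of nonempty compact sets (closures taken in $\overline{\mathbb R^2}$) with one-point intersection, a standard compactness argument gives $\diam\bigl(\overline{D(h,\varepsilon)\cap\Omega}\bigr)\to 0$ as $\varepsilon\to 0$: otherwise one could pick $y_\varepsilon$ in these sets with $|y_\varepsilon-s_h|\ge d>0$ and extract a cluster point lying in every $\overline{D(h,\varepsilon)\cap\Omega}$ but different from $s_h$. Then, because $\rho_{(F,V)}(x_m,h)\to 0$ places $x_m$ in $D(h,\varepsilon)\cap\Omega$ for all large $m$, and $s_h$ belongs to each closure, $|x_m-s_h|\le\diam\bigl(\overline{D(h,\varepsilon)\cap\Omega}\bigr)$, which is the conclusion --- no capacity estimate, no curves, no Lemma~\ref{lem:CapContin}. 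This is essentially the paper's own proof (stated there via the Cauchy property of $\{x_m\}$ in the Euclidean metric). You should prove the shrinking-diameter claim, which you asserted as ``the key point'' but never justified, and delete the contradiction scheme of your second and third paragraphs.
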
 

\begin{proof} Suppose that $\rho_{(F,V)}(x_{m},h)\rightarrow0$ while $m\rightarrow\infty$.
Because the realization $s_{h}$ of a boundary element $h\in H_{\rho}$ is a point, then
$$
diam\left(\overline{D(h,\varepsilon)\cap\Omega}\right)=\sup\limits_{x,y\in \overline{D(h,\varepsilon)\cap\Omega}}|x-y|
\to 0\,\,\,\text{while}\,\,\,\varepsilon\to 0.
$$
The sequence $\{x_{m}\}$ belongs to a boundary element $h\in H_{\rho}$ and so
$$
|x_m-x_n|\to 0\,\,\,\text{while}\,\,\,m,n\to\infty.
$$
Hence, the sequence $\{x_n\}$ is a sequence Cauchy in the Euclidean metric, and we have that $|x_{m}-s_{h}|\rightarrow0$
while $m\rightarrow\infty$. 
\end{proof}
 
\begin{lem} 
\label{lem:LocConnected}
Let a domain $\Omega$ is locally
connected at a point $x\in\partial\Omega$ and $x\in s_h$ for some and $h\in H_{\rho}$. Then for every sequence $\{x_{m}\in\Omega\}$ such that $|x_{m}-x|\rightarrow 0$
we have $\rho_{(F,V)}(x_{m},h)\rightarrow 0$ (while $m\rightarrow\infty$). 
\end{lem}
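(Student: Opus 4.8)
The plan is to show that if $|x_m - x| \to 0$ for $x$ in the realization $s_h$ of a capacitary boundary element $h$, then $\rho_{(F,V)}(x_m, h) \to 0$, using local connectedness to build short connecting curves. First I would fix a fundamental sequence $\{y_k\} \in h$; by definition $\rho_{(F,V)}(x_m, h) = \lim_{k\to\infty} \rho_{(F,V)}(x_m, y_k)$, so it suffices to control $\rho_{(F,V)}(x_m, y_k)$ uniformly for large $k$ as $m \to \infty$. The key point is that since $x \in s_h = \bigcap_{\varepsilon>0}\overline{D(h,\varepsilon)\cap\Omega}$, for every $\varepsilon > 0$ there are points of $\Omega$ arbitrarily close (in the Euclidean sense) to $x$ that lie in $D(h,\varepsilon)$; combined with local connectedness at $x$, I can join $x_m$ to such a point by a curve contained in a small connected Euclidean neighborhood $U$ of $x$ in $\Omega$.

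Next I would estimate the two capacity terms along such a curve $l$. Given $\delta > 0$, choose a connected neighborhood $U \ni x$ in $\Omega$ so small that $\overline U$ misses the fixed continuum $F$ and so small that $\cp^{1/2}(\partial\Omega, U; \Omega) < \delta/3$ — this is possible because the capacity of a condenser shrinks to zero as the inner plate shrinks to a point (the estimate behind the second lemma of the previous subsection, i.e. the upper bound $\cp \le c_2(\ln(1/\varepsilon))^{-1}$, applied after a bi-Lipschitz chart). For $m$ large, $x_m \in U$; pick $k$ large enough that $y_k \in D(h, \delta/3) \cap U'$ where $U'$ is an even smaller connected neighborhood with the same properties, and join $x_m$ to $y_k$ by a curve $l(x_m, y_k) \subset U$. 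Then $l(x_m,y_k) \setminus V$ lies in $U$ away from $F$, so both $\cp^{1/2}(F, l\setminus V;\Omega)$ is controlled and $\cp^{1/2}(\partial\Omega, l\cap V;\Omega) \le \cp^{1/2}(\partial\Omega, U;\Omega) < \delta/3$; hence $\rho_{(F,V)}(x_m, y_k) \le \delta/3 + \rho_{(F,V)}(y_k, h) + (\text{the }F\text{ term})$, and letting $k\to\infty$ gives $\rho_{(F,V)}(x_m, h) \le \delta$ for all large $m$.

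The main obstacle is handling the first capacity term $\cp^{1/2}(F, l(x_m,y_k)\setminus V;\Omega)$: this does \emph{not} go to zero merely because $l$ is short (a curve near the boundary can still have capacity bounded below relative to $F$, as Lemma~\ref{lem:CapContin} warns). The resolution is that we are not asked to show $\rho_{(F,V)}(x_m, h) \to 0$ via a single short curve from $x_m$ to $y_k$, but rather to exploit that $\{y_k\}$ is \emph{already} $\rho$-fundamental and converges to $h$; so I should instead concatenate: run along a curve realizing $\rho_{(F,V)}(x_m, y_{k})$ approximately, but first observe that by the triangle inequality $\rho_{(F,V)}(x_m,h) \le \rho_{(F,V)}(x_m, y_k) + \rho_{(F,V)}(y_k, h)$, and the genuinely new estimate needed is only $\limsup_{m} \rho_{(F,V)}(x_m, y_k) \le \delta/3$ for suitable $k$. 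For that, I join $x_m$ to $y_k$ \emph{through} $x$ is not allowed ($x \notin \Omega$), so instead I pick a fixed auxiliary point $z_0 \in U'' \cap \Omega$ very close to $x$, join $x_m \to z_0$ and $z_0 \to y_k$ inside the small connected neighborhood; the $F$-term for each piece is bounded by $\cp^{1/2}(F, \overline{U};\Omega)$, which tends to $0$ as $\diam U \to 0$ by the same capacity-upper-bound lemma (applied to the \emph{other} plate). Thus shrinking $U$ makes \emph{both} terms small simultaneously, and the triangle inequality closes the argument. The one technical care is that the curve pieces must remain rectifiable and admissible (not meeting $V_1, V_2$), which is arranged by taking $U$ disjoint from $V$ at the outset.
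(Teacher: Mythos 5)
Your overall route is the same as the paper's: local connectedness at $x$ supplies curves of arbitrarily small diameter inside $\Omega$ joining points near $x$ and avoiding $V$, the $F$-term is small because a plate of small diameter at a fixed positive distance from $F$ has small conformal capacity, and the passage from Euclidean closeness to $\rho$-closeness to $h$ goes through $x\in s_h$ plus the triangle inequality (a step the paper leaves implicit and you rightly make explicit). But two of your stated justifications are wrong as written. First, you cannot choose the connected neighborhood $U\ni x$ so that $\cp^{\frac12}(\partial\Omega,U;\Omega)<\delta/3$: since $x\in\overline U\cap\partial\Omega$, the plates of that condenser are at zero distance and its capacity does not shrink as $U$ shrinks (for $\Omega=\mathbb D$ and $U=B(x,\delta)\cap\mathbb D$ it is not even finite); the small-plate estimate you invoke requires the shrinking plate to stay at positive distance from the other plate. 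Fortunately the bound is not needed: once $l\subset U$ and $U\cap V=\emptyset$, the second term is $\cp^{\frac12}(\partial\Omega,l\cap V;\Omega)=\cp^{\frac12}(\partial\Omega,\emptyset;\Omega)=0$, which is exactly how the paper disposes of it (``without loss of generality $l\cap V=\emptyset$''); state that instead of the false capacity estimate. Second, the ``main obstacle'' you describe is illusory: Lemma~\ref{lem:CapContin} bounds from below only the capacity of curves joining $x$ to the circle $S(x,2r)$, i.e.\ curves of a definite diameter; for a curve $l$ of small diameter contained in $U$ with $\dist(U,F)\ge d>0$ one has $\cp(F,l;\Omega)\le\cp(F,l;\mathbb R^2)\le c\bigl(\ln (d/\diam U)\bigr)^{-1}\to 0$, which is precisely the estimate the paper's proof uses; your bound via $\cp(F,\overline U;\Omega)$ is the same fact (only bound $\cp(F,l;\Omega)$ directly, since $\overline U$ is not a compact subset of $\Omega$ and hence not an admissible plate).

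There is also a small but real slip in how you reach $h$: you ``pick $k$ large enough that $y_k\in D(h,\delta/3)\cap U'$'', but for an arbitrarily chosen fundamental sequence $\{y_k\}$ representing $h$ there is no reason its terms ever enter a prescribed Euclidean neighborhood of $x$ (the realization $s_h$ may contain other points and $\{y_k\}$ may cluster there), so the concatenation $z_0\to y_k$ ``inside the small neighborhood'' is not available. The repair is immediate and already contained in your own first paragraph: since $x\in s_h$, there exist $z\in\Omega$ with $\rho_{(F,V)}(z,h)<\delta/2$ and $|z-x|$ arbitrarily small; fix such a $z_0\in U$, join $x_m$ to $z_0$ by a curve $l\subset U$ for all large $m$, and conclude $\rho_{(F,V)}(x_m,h)\le\rho_{(F,V)}(x_m,z_0)+\rho_{(F,V)}(z_0,h)\le\cp^{\frac12}(F,l;\Omega)+\delta/2<\delta$, with no reference to $y_k$ at all. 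With these corrections your argument is sound and in fact more complete than the paper's own proof, which only verifies that $\{x_m\}$ is $\rho$-Cauchy and then asserts the convergence to $h$.
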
 

\begin{proof} Since the domain $\Omega$ be locally connected at
the point $x\in\partial\Omega$ then any two points $x_{k},x_{m}$
from the sequence $\{x_{n}\}$ can be connected by a geodesic path
$l(x_{k},x_{m})$ such that its length tends to zero for $k,m\to\infty$.
Without loss of generality we can suppose that $l(x_{k},x_{m})\cap V=\emptyset$.
Hence $\cp_{p}(F_{1},l(x_{k},x_{m});\Omega)$ tends to zero for $k,m\to\infty$
and therefore\[
\lim\limits _{n\rightarrow\infty}\rho_{(F,V)}(x_{n},h)=0.\]
\end{proof}

From these lemmas follows:

\begin{thm} 
\label{LocCon}
Let a domain $\Omega$ is locally
connected at any point $x\in\partial\Omega$. Then the identical mapping $i:\Omega\to\Omega$ can be extend to a homeomorphism
$\tilde{i_\rho}: \overline{\Omega}\to \tilde{\Omega}_{\rho}$ if and only if  all realizations $s_{h}$ of boundary
elements $h\in H_{\rho}$ are one-points. 
\end{thm}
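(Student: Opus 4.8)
The plan is to show the two implications separately, using Lemmas~\ref{lem:OnePoint} and \ref{lem:LocConnected} as the two halves of the argument. First, suppose every realization $s_h$ is one-point. Define a map $\tilde{i}_\rho : \overline{\Omega} \to \tilde{\Omega}_\rho$ as follows: on $\Omega$ it is the identity, and for $x \in \partial\Omega$ one sets $\tilde{i}_\rho(x) = h$ where $h \in H_\rho$ is the boundary element with $s_h = \{x\}$. To see this is well defined I must check that every boundary point $x$ lies in the realization of exactly one $h$; existence follows by taking any sequence $x_m \in \Omega$ with $x_m \to x$ (which exists since $x \in \partial\Omega$), passing to a subsequence that is $\rho$-fundamental --- here one uses that $\overline{\Omega}$ is compact in $\overline{\mathbb{R}^2}$ together with the equivalence of topologies in $\Omega$ (Theorem~\ref{thm:CoinTopol}) and boundedness of $\rho$ on such sequences --- and letting $h$ be its $\rho$-limit; uniqueness follows from Lemma~\ref{lem:LocConnected} applied to the local connectedness hypothesis, which forces any two such sequences to be $\rho$-equivalent.

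Next I would verify that $\tilde{i}_\rho$ is a bijection and a homeomorphism. Surjectivity: every $h \in H_\rho$ has $s_h$ a single point $x$, and $x \in \partial\Omega$ (it cannot lie in $\Omega$ since the $\rho$-topology and Euclidean topology agree inside $\Omega$ by Theorem~\ref{thm:CoinTopol}), so $\tilde{i}_\rho(x) = h$. Injectivity: if $\tilde{i}_\rho(x) = \tilde{i}_\rho(x') = h$ then $\{x\} = s_h = \{x'\}$. Continuity of $\tilde{i}_\rho$ at a boundary point $x$ is exactly Lemma~\ref{lem:LocConnected}: if $x_m \to x$ in $\overline{\Omega}$ (boundary points included, handled by a diagonal argument) then $\rho(x_m, \tilde{i}_\rho(x)) \to 0$. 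Continuity of the inverse at $h$ is Lemma~\ref{lem:OnePoint}: $\rho(x_m, h) \to 0$ forces $|x_m - s_h| \to 0$, i.e. convergence in $\overline{\Omega}$. Since $\overline{\Omega}$ is compact, a continuous bijection onto a Hausdorff space is automatically a homeomorphism, so this last point can be shortened.

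For the converse, suppose $\tilde{i}_\rho : \overline{\Omega} \to \tilde{\Omega}_\rho$ is a homeomorphism extending the identity, and suppose for contradiction that some realization $s_h$ contains two distinct points $a, b$. By definition of $s_h$ as $\bigcap_{\varepsilon > 0} \overline{D(h,\varepsilon) \cap \Omega}$, there are sequences $a_m, b_m \in \Omega$ with $a_m \to a$, $b_m \to b$ in the Euclidean metric and $\rho(a_m, h), \rho(b_m, h) \to 0$. Then $a_m$ and $b_m$ both converge to $h$ in $\tilde{\Omega}_\rho$, so $\tilde{i}_\rho^{-1}(a_m)$ and $\tilde{i}_\rho^{-1}(b_m)$ converge to the same point $\tilde{i}_\rho^{-1}(h) \in \overline{\Omega}$; but $\tilde{i}_\rho^{-1}$ is the identity on $\Omega$, so $a_m \to \tilde{i}_\rho^{-1}(h)$ and $b_m \to \tilde{i}_\rho^{-1}(h)$ in $\overline{\Omega}$, forcing $a = b$, a contradiction.

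The main obstacle I expect is the well-definedness and continuity bookkeeping in the first implication: one must be careful that boundary points of $\overline{\Omega}$ correspond to $\rho$-boundary elements (not interior points), that the correspondence is single-valued, and that continuity statements phrased for sequences $\{x_m \in \Omega\}$ in Lemmas~\ref{lem:OnePoint}--\ref{lem:LocConnected} upgrade to genuine continuity of maps between the two completions, including at sequences of boundary points --- this is where a diagonal/subsequence argument and the compactness of $\overline{\Omega}$ do the real work. The converse direction is essentially a formal topological argument once the definition of $s_h$ is unwound.
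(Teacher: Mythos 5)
Your proposal is correct and follows essentially the same route as the paper: the homeomorphism is the correspondence $h\leftrightarrow s_h$, with Lemma~\ref{lem:LocConnected} giving continuity in one direction and Lemma~\ref{lem:OnePoint} in the other, and your converse direction is just a more explicit version of the paper's one-line observation. One small repair: the existence, for each $x\in\partial\Omega$, of some $h\in H_{\rho}$ with $x\in s_h$ should be justified not by compactness of $\overline{\Omega}$ (boundedness of $\rho$ on a sequence does not yield a $\rho$-fundamental subsequence) but by the local-connectedness hypothesis itself, exactly as in the proof of Lemma~\ref{lem:LocConnected}: points of a sequence tending to $x$ can be joined by short arcs near $x$, so the whole sequence is $\rho$-Cauchy and its class is the required boundary element (it cannot be an interior point by Theorem~\ref{thm:CoinTopol}) --- a step the paper's own proof also leaves implicit.
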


\begin{proof} Suppose that an identical mapping $i:\Omega\to\Omega$ can be extended to a homeomorphism
$\tilde{i_\rho}: \overline{\Omega}\to \tilde{\Omega}_{\rho}$. Then every boundary element $h\in H_{\rho}$ coincides with a point $x\in \partial\Omega$ and so has an one-point realization. 

Inversely, let all realizations $s_{h}$ of a boundary elements $h\in H_{\rho}$ are one-points. 
Then extending an identical mapping $i:\Omega\to\Omega$ to the mapping $\tilde{i_{\rho}}:\tilde{\Omega}_{\rho}\to\overline{\Omega}$ by the rule $\tilde{i_{\rho}}(h)=s_h$ we obtain a one-to-one correspondence $\tilde{i_{\rho}}: \tilde{\Omega}_{\rho}\to\overline{\Omega}$.
Let us check continuity of $\tilde{i_{\rho}}$ and ${\tilde{i_p}}^{-1}$. 

Suppose that $x_k\to x$ in $\overline{\Omega}$. Because all realizations $s_{h}$ of boundary elements $h\in H_{\rho}$ are one-points and $x\in s_h$ then by Lemma~\ref{lem:LocConnected} follows $\rho_{(F,V)}(x_{m},h)\rightarrow 0$, while $m\rightarrow\infty$. We prove that $\tilde{i_{\rho}}$ is continuous.

Suppose $h_k\to h_0$ in $\tilde{\Omega}_{\rho}$. Because $\Omega$ is locally connected then realizations of $h_k$ and $h_0$ are one point sets and we can identify $h_k$ and $h_0$ with their realizations. By Lemma~\ref{lem:OnePoint} $h_k\to h_0$ in $\Omega$. Therefore ${\tilde{i_{\rho}}}^{-1}$ is also continuous. 
Therefore $\tilde{i_{\rho}}$ is a homeomorphism.
\end{proof}

\subsection{Conformal capacitary boundary and Carath\'eodory prime ends}

The notion of the ideal boundary in the terms of prime end was introduced by Carath\'eodory \cite{Cr}. The Cartheodory prime ends represent a compactification of plane domains in the relative distance introduced by Lavrentiev \cite{Lv}. 
(A detailed historical sketch  can be found in \cite{Mi}). We prove that the capacitary boundary is homeomorphic to the Carath\'eodory boundary.

\begin{thm}\label{thm:CapCar} 
Let $\Omega\subset\mathbb R^2$ be a simply connected domain, $\Omega\ne \mathbb R^2$. Then the capacitary boundary $H_{\rho}$ is homeomorphic to the Carath\'eodory boundary $\partial_{C}\Omega$. 
\end{thm}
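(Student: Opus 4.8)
The plan is to carry the statement to the unit disc via the Riemann Mapping Theorem and to invoke there the classical Carath\'eodory prime end theorem. Since $\Omega$ is simply connected and $\Omega\ne\mathbb R^2$, fix a conformal homeomorphism $\varphi:\mathbb D\to\Omega$, and write $H_\rho(\mathbb D)=\widetilde{\mathbb D}_\rho\setminus\mathbb D$ and $H_\rho(\Omega)=\widetilde\Omega_\rho\setminus\Omega$; by Theorem~\ref{thm:IsometryMetric} these spaces are, up to homeomorphism, independent of the pair $(F,V)$ defining $\rho$. I will establish the chain
\[
\partial_C\Omega\;\cong\;S(0,1)\;\cong\;H_\rho(\mathbb D)\;\cong\;H_\rho(\Omega)
\]
and read off the theorem.

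The two outer homeomorphisms are essentially standard. Since $\varphi$ is conformal it maps $(F,V)$ in $\mathbb D$ to an admissible pair in $\Omega$, and by the conformal invariance of the conformal capacity it identifies the two capacity terms in the definition of $\rho$; together with Theorem~\ref{thm:IsometryMetric} this makes $\varphi:(\mathbb D,\rho)\to(\Omega,\rho)$ bi-Lipschitz, cf.\ \cite{GV}. A bi-Lipschitz bijection extends uniquely to a bi-Lipschitz bijection of the completions, and it carries $\mathbb D$ onto $\Omega$, hence $H_\rho(\mathbb D)$ onto $H_\rho(\Omega)$. For the other end, the Carath\'eodory extension theorem \cite{Cr} (see also \cite{Mi}) gives a homeomorphism of $\overline{\mathbb D}$ onto $\Omega\cup\partial_C\Omega$ equipped with the prime end topology, whose restriction to $S(0,1)$ is a homeomorphism onto $\partial_C\Omega$.

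The middle homeomorphism $S(0,1)\cong H_\rho(\mathbb D)$ is where the work lies. The disc is locally connected at every boundary point, so by Theorem~\ref{LocCon} it suffices to prove that each capacitary boundary element $h\in H_\rho(\mathbb D)$ has a one-point realization. Since the $\rho$-topology and the Euclidean topology agree on $\mathbb D$ (Theorem~\ref{thm:CoinTopol}), one readily gets $\emptyset\ne s_h\subset S(0,1)$, and Lemma~\ref{lem:LocConnected} shows that for any $\zeta\in s_h$ every Euclidean sequence tending to $\zeta$ is $\rho$-fundamental and $\rho$-converges to $h$. It remains to exclude $s_h\supset\{\zeta,\zeta'\}$ with $\zeta\ne\zeta'$: such a situation produces curves $l_n\subset\mathbb D$ joining a fixed neighbourhood of $\zeta$ to a fixed neighbourhood of $\zeta'$ whose $\rho$-cost tends to $0$, whereas the fixed separation $|\zeta-\zeta'|>0$ should keep $\rho(\cdot,\cdot)$ bounded below. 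Granting this, Theorem~\ref{LocCon} yields $\overline{\mathbb D}\cong\widetilde{\mathbb D}_\rho$, hence $S(0,1)\cong H_\rho(\mathbb D)$, and composing the three links gives $H_\rho\cong\partial_C\Omega$.

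I expect the main obstacle to be precisely this one-point realization property, i.e.\ the lower bound $\cp(F,l;\mathbb D)\ge c>0$ for every curve $l$ joining two points of $\mathbb D$ a fixed Euclidean distance apart. If $l$ enters the compact set $V$ it must cross an intermediate circle, and the interior estimate of Lemma~\ref{lem:CapContin} applies at once; the delicate case is a curve that hugs $\partial\mathbb D$ along its whole length, for which I would identify $\cp(F,l;\mathbb D)$ with the $2$-modulus of the family of curves joining $F$ to $l$ in $\mathbb D$ and bound that modulus below in terms of the angular range spanned by $l$. A lesser technical point, in the transfer step, is the bookkeeping of the class of admissible (rectifiable) curves under $\varphi$ and $\varphi^{-1}$: a conformal image of a rectifiable curve need not be rectifiable, so one checks that the infimum defining $\rho$ is unchanged if all locally rectifiable curves are allowed, a class preserved by conformal maps.
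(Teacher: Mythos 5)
Your proposal follows the same route as the paper: both boundaries are identified with the unit circle $S(0,1)$ and the desired homeomorphism is obtained by composing $\partial_C\Omega\cong S(0,1)\cong H_\rho(\mathbb D)\cong H_\rho(\Omega)$. The paper's own proof, however, consists of exactly two unproved assertions ("each boundary is homeomorphic to $\partial\mathbb D$"), so your write-up is substantially more complete; the one step you explicitly grant --- that every capacitary boundary element of $\mathbb D$ has a one-point realization --- is precisely the substantive content the paper also omits. Your plan for closing it is sound: for any curve $l$ joining fixed neighbourhoods of $\zeta\ne\zeta'$ on $S(0,1)$, the set $l\setminus V$ contains a continuum of diameter bounded below; since $\mathbb D$ is an $L^1_2$-extension domain, $\cp(F,l\setminus V;\mathbb D)$ is comparable to $\cp(F,l\setminus V;\mathbb R^2)$ (as in the proof of Theorem~\ref{thm:HomForExtension}), and the latter is bounded below by the standard two-continua (Teichm\"uller-type) estimate, so $\rho_{(F,V)}(x,y)$ stays bounded away from zero and $s_h$ cannot contain two distinct points.
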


\begin{proof} The Carath\'eodory boundary $\partial_{C}\Omega$ is homeomorphic to the boundary of the unit disc $\partial \mathbb D$. The capacitary boundary is homeomorphic to the boundary of the unit disc $\partial \mathbb D$ also. Hence the capacitary boundary $H_{\rho}$ is conformally equivalent to the Carath\'eodory boundary $\partial_{C}\Omega$. 
\end{proof} 

On the base of this theorem we give some examples \cite{Eps} of boundary elements $h\in H_{\rho}$ of the conformal capacitary boundary.

\begin{exa}
Let
$$
X=\{(x,y): y=1/3^n\,\, \text{for some}\,\, n\geq 1\,\, \text{and}\,\, -1\leq x\leq 2\}
$$
and
$$
Y=\{(x,y): y=2/3^n\,\, \text{for some}\,\, n\geq 1\,\, \text{and}\,\, -2\leq x\leq 1\}.
$$
Let $\Omega = (-2,2)\times(0,1)\setminus(X\cup Y)$. The boundary element of this domain is 
$h=\{(x,0): -1\leq x\leq 1\}$.
\end{exa}

\begin{exa}
Let $\Omega=\mathbb R^2\setminus K$, where $K$ is given in polar coordinates by 
\begin{multline}
K=\{(r,\theta): \theta=2\pi p\,\,\text{for some integer}\,\, n\geq 1\,\, \text{and some odd integer}\,\,p\,\,\\
\text{such that}\,\, 0<p<2^n, \, 0\leq r\leq 1/2^n\}.
\nonumber
\end{multline}
The boundary element $h\in H_{\rho}$ of this domain at the origin is homeomorphic to a Cantor set. 
\end{exa}

By C.~Caratheodory \cite{Cr} the domain $\Omega$ is called locally connected at boundary points point if and only if every boundary element has trivial realization. Hence we have the following corollary of Theorem \ref{LocCon}:

\begin{cor} 
\label{LocConSim}
Let a simply connected domain $\Omega$ is locally
connected at any point $x\in\partial\Omega$. Then the identical mapping $i:\Omega\to\Omega$ can be extend to a homeomorphism
$\tilde{i_\rho}: \tilde{\Omega}_{\rho}\to\overline{\Omega}$.
\end{cor}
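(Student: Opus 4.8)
The plan is to read this off from Theorem~\ref{LocCon}. That theorem asserts, under the standing assumption that $\Omega$ is locally connected at every boundary point, the \emph{equivalence} of two statements: first, that the identity $i:\Omega\to\Omega$ extends to a homeomorphism between $\overline{\Omega}$ and $\tilde{\Omega}_{\rho}$; and second, that every realization $s_h$, $h\in H_{\rho}$, is a single point. So it suffices to verify the second statement for a simply connected $\Omega$ that is locally connected at every boundary point.

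To verify it I would combine two facts established above. By Theorem~\ref{thm:CapCar}, for a simply connected $\Omega\ne\mathbb R^2$ the capacitary boundary $H_{\rho}$ is homeomorphic to the Carath\'eodory boundary $\partial_C\Omega$, and under this identification a capacitary boundary element $h$ corresponds to a Carath\'eodory prime end whose impression is exactly the realization $s_h$. One has to check that the Euclidean impressions of a nested chain of $\rho$-disks $D(h,\varepsilon)$ and of a defining chain of crosscut neighborhoods of the prime end coincide; this is essentially the content of the topological equivalence used in the proof of Theorem~\ref{thm:CapCar}, so $\bigcap_{\varepsilon>0}\overline{D(h,\varepsilon)\cap\Omega}$ equals the classical impression. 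Then Carath\'eodory's criterion (\cite{Cr}, as quoted above) says that $\Omega$ is locally connected at every boundary point if and only if every prime end, equivalently every $h\in H_{\rho}$, has trivial realization. Since the hypothesis of the corollary is precisely local connectedness at every boundary point, all realizations $s_h$ are one-point sets.

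Applying Theorem~\ref{LocCon} now yields a homeomorphism $\tilde{i_{\rho}}:\overline{\Omega}\to\tilde{\Omega}_{\rho}$, and its inverse is the asserted homeomorphism $\tilde{i_{\rho}}:\tilde{\Omega}_{\rho}\to\overline{\Omega}$. I expect the only genuine work to be the bookkeeping in the previous paragraph identifying the capacitary realization $s_h$ with the classical prime-end impression; once that identification is granted, the corollary is a one-line consequence of Theorem~\ref{LocCon} and the Carath\'eodory criterion.
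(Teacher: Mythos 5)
Your proposal follows essentially the same route as the paper: the paper likewise obtains the corollary from Theorem~\ref{LocCon} combined with Carath\'eodory's criterion that local connectedness at every boundary point is equivalent to every boundary element having a one-point realization, using the identification of $H_{\rho}$ with the prime-end boundary from Theorem~\ref{thm:CapCar}. The only difference is that you explicitly flag the need to identify the capacitary realization $s_h$ with the classical prime-end impression, a step the paper passes over silently.
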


\section{ Strong Luzin Property for the Capacitary Metric and Boundary Values of Sobolev functions}

Recall the notion of the conformal capacity of a set $E\subset \Omega$. Let $\Omega$ be a domain in $\mathbb R^2$ and a compact $F\subset\Omega$. The conformal capacity of the compact $F$ is defined by
$$
\cp(F;\Omega)=\inf\{\|u|L^1_2(\Omega\|^2,\,\,u\geq 1\,\, \text{on}\,\, F, \,\,u\in C_0(\Omega)\}.
$$
By the similar way we can define the conformal capacity of open sets.

For arbitrary set $E\subset\Omega$ we define a inner conformal capacity as 
$$
\underline{\cp}(E;\Omega)=\sup\{\cp(e;\Omega),\,\,e\subset E\subset\Omega,\,\, e\,\,\text{is a compact}\},
$$
and a outer conformal capacity as 
$$
\overline{\cp}(E;\Omega)=\inf\{\cp(U;\Omega),\,\,E\subset U\subset\Omega,\,\, U\,\,\text{is an open set}\}.
$$
A set $E\subset\Omega$ is called conformal capacity measurable, if $\underline{\cp}(E;\Omega)=\overline{\cp}(E;\Omega)$. The value
$$
\cp(E;\Omega)=\underline{\cp}(E;\Omega)=\overline{\cp}(E;\Omega)
$$
is called the conformal capacity of the set $E\subset\Omega$.

The classical Luzin theorem asserts that every measurable function
is {\bf uniformly} continuous if it is restricted to the complement of an
open set of arbitrary small measure. It is reasonable to conjecture
that every function $u\in L^{1}_{2}(\Omega)$ is {\bf uniformly} continuous
if it is restricted to the complement of an open subset of $\Omega\subset R^{2}$
of arbitrary small conformal capacity. Unfortunately this conjecture is
wrong for an arbitrary domain and is correct only under additional
conditions on $\Omega.$ The weak version of The Luzin theorem is correct for the capacity:

\begin{thm} 
\label{thm:WeakLuzin}
(Weak Luzin theorem for $p$-capacity \cite{Maz})
Let $\Omega\subset \mathbb R^2$ be an open set. For any function $u\in L^1_2(\Omega)$ and for any $\varepsilon>0$ there exists an open set $U_{\varepsilon}\in\Omega$, $\cp(U_{\varepsilon};\Omega)<\varepsilon$, such that $u|_{\Omega\setminus U_{\varepsilon}}$ is continuous.
\end{thm}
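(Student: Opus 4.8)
The plan is to derive the statement from two classical facts: the density of smooth functions in $L^1_2(\Omega)$ (the Meyers--Serrin theorem, see \cite{Maz}) and the elementary weak-type estimate for the conformal capacity. First I would choose a sequence $u_k\in C^\infty(\Omega)\cap L^1_2(\Omega)$ with $u_k\to u$ in $L^1_{\loc}(\Omega)$ and $\nabla u_k\to\nabla u$ in $L_2(\Omega)$; passing to a subsequence I may assume in addition that $\|u_{k+1}-u_k\,|\,L^1_2(\Omega)\|\le 2^{-2k}$ for every $k$ and that $u_k\to u$ almost everywhere in $\Omega$.

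The quantitative heart of the argument is the inequality
\[
\cp\bigl(\{x\in\Omega:\, |w(x)|>\lambda\};\Omega\bigr)\le\lambda^{-2}\,\|w\,|\,L^1_2(\Omega)\|^2,\qquad\lambda>0,
\]
valid for $w\in C(\Omega)\cap L^1_2(\Omega)$: the function $v=\min(|w|/\lambda,1)$ is continuous, equals $1$ on the open set $\{|w|>\lambda\}$, and satisfies $\int_\Omega|\nabla v|^2=\lambda^{-2}\int_{\{|w|<\lambda\}}|\nabla w|^2\le\lambda^{-2}\|w\,|\,L^1_2(\Omega)\|^2$, so after the routine truncation by a cutoff to meet the compact-support requirement in the definition of $\cp$ it is admissible. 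Applying this with $w=u_{k+1}-u_k$ and $\lambda=2^{-k}$ I set
\[
E_k=\{x\in\Omega:\, |u_{k+1}(x)-u_k(x)|>2^{-k}\},
\]
an open subset of $\Omega$ with $\cp(E_k;\Omega)\le 2^{2k}\cdot 2^{-4k}=2^{-2k}$, and then $G_m=\bigcup_{k\ge m}E_k$, which is open and, by countable subadditivity of the conformal capacity \cite{Maz}, satisfies $\cp(G_m;\Omega)\le\sum_{k\ge m}2^{-2k}\to 0$ as $m\to\infty$.

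On $\Omega\setminus G_m$ one has $|u_{k+1}-u_k|\le 2^{-k}$ for every $k\ge m$, hence the telescoping series $u_m+\sum_{k\ge m}(u_{k+1}-u_k)$ converges uniformly there; its sum is a continuous function $g_m$ on $\Omega\setminus G_m$, equal to $\lim_k u_k$ wherever that limit exists. Since $g_{m'}=g_m$ on $\Omega\setminus G_m$ whenever $m'>m$, the functions $g_m$ glue to a single function $\widetilde u$ defined on $\Omega\setminus\bigcap_m G_m$, a set whose complement has conformal capacity, hence Lebesgue measure, zero; and $\widetilde u=u$ a.e. in $\Omega$ because $u_k\to u$ a.e. Now, given $\varepsilon>0$, I pick $m$ with $\cp(G_m;\Omega)<\varepsilon$ and put $U_\varepsilon=G_m$; identifying $u$ with its quasicontinuous representative $\widetilde u$, the restriction $u|_{\Omega\setminus U_\varepsilon}=g_m$ is continuous, which is the assertion. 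If one insists that $U_\varepsilon$ be compactly contained in $\Omega$, one repeats the construction relative to a compact exhaustion of $\Omega$, a routine modification.

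The step I expect to require the most care is the weak-type capacity estimate together with countable subadditivity of $\cp$: one must verify that the truncated, compactly supported competitor really belongs to the admissible class used to define $\cp$, and that $\cp\bigl(\bigcup_i U_i;\Omega\bigr)\le\sum_i\cp(U_i;\Omega)$ for open sets $U_i$. Both are standard properties of the conformal capacity recorded in \cite{Maz} and may simply be cited here rather than reproved. The genuinely two-dimensional feature --- that $L^1_2(\Omega)$ does not embed into $C(\Omega)$, so the smooth approximants need not be uniformly close to $u$ --- is precisely why the exceptional set must be measured in capacity and not in the supremum norm, and it causes no difficulty in the scheme above.
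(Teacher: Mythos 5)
The paper does not actually prove this statement: it is quoted as a known result with a reference to Maz'ya's book, so there is no in-paper argument to compare against. Your proposal supplies the standard proof of quasicontinuity (smooth approximation, the capacitary weak-type inequality, countable subadditivity, and uniform convergence of the telescoping series off a set of small capacity), and it is essentially the argument one finds in \cite{Maz} and in Heinonen--Kilpel\"ainen--Martio; the scheme is sound. Two points deserve the care you already partly flag. First, the paper's set capacity $\cp(F;\Omega)$ is defined with competitors in $C_0(\Omega)$, while your competitor $v=\min(|w|/\lambda,1)$ has no compact support; multiplying by a cutoff $\eta$ produces the extra gradient term $v\nabla\eta$, which is \emph{not} controlled by $\|\nabla w\,|\,L_2(\Omega)\|$ alone, since in the homogeneous space $L^1_2(\Omega)$ the function $w$ itself need not be square-integrable. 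The clean fix is exactly the one you mention in passing: prove the weak-type estimate and the Luzin-type conclusion on each member of a compact exhaustion $\Omega_j\Subset\Omega$ (where the cutoff term is harmless and contributes a constant depending on $j$), and then assemble the exceptional set as a countable union with capacities summing to less than $\varepsilon$; this should be written out rather than left as ``a routine modification,'' because it is the only place the argument could genuinely break. Second, for the homogeneous space $L^1_2(\Omega)$ the Meyers--Serrin density of smooth functions holds in the seminorm together with $L^1_{\loc}$ convergence, which is all you use, so that step is fine. With the exhaustion step made explicit, your proof is complete and is, for practical purposes, a self-contained substitute for the citation.
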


We discuss here the strong version of the Luzin property for the capacity:

\begin{defn} {\bf Strong Luzin capacitary property}. A domain $\Omega\subset\mathbb R^2$ possesses a strong Luzin capacitary property if for every function $u\in L^{1}_{2}(\Omega)$ and for any $\varepsilon>0$
there exists an open set $U_{\varepsilon}$ of the conformal capacity less
then $\varepsilon$ and such that the restriction of the function $u$ on $\Omega \setminus U_{\varepsilon}$ is uniformly continuous for the conformal capacitary metric.  
\end{defn}

This property looks very restrictive, but, in reality, it is correct for a large set of domains. We prove in this section  that any extension domain possess the Luzin capacitary property and we prove in next section that any quasiconformal homeomorphism preserves the strong Luzin capacitary property. 

Our main motivation for a study of this property is  the following result:

\begin{thm}
\label{thm:Tietz} 
Let a domain $\Omega\subset \mathbb R^{2}$ possesses the strong Luzin capacitary property. Then for any function $u\in L_{2}^{1}(\Omega)$
there exists a quasicontinuous function $\widetilde{u}:\widetilde{\Omega}_{\rho}\to \mathbb R$
defined quasieverywhere on $H_{\rho}$ such that $\tilde{u}|_\Omega=u$. 
\end{thm}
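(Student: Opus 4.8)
The plan is to build $\widetilde u$ on $\widetilde\Omega_\rho$ directly from the strong Luzin property and then check it is quasicontinuous. First I would fix $u\in L^1_2(\Omega)$ and apply the strong Luzin capacitary property to a sequence $\varepsilon_k\to 0$: this yields open sets $U_k\subset\Omega$ with $\cp(U_k;\Omega)<\varepsilon_k$ such that $u|_{\Omega\setminus U_k}$ is uniformly continuous for the metric $\rho=\rho_{(F,V)}$. We may arrange the $U_k$ to be decreasing (replace $U_k$ by $\bigcap_{j\le k}U_j$, still open, still small capacity by monotonicity of capacity). Set $N=\bigcap_k \overline{U_k}^{\,\rho}\subset\widetilde\Omega_\rho$, the intersection of the closures in the capacitary metric; since $\cp(U_k;\Omega)\to 0$, the part of $N$ lying in $\Omega$ has conformal capacity zero, and the part of $N$ in $H_\rho$ is the "exceptional set" of capacitary-boundary points where $\widetilde u$ will be left undefined. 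This is where the phrase "defined quasieverywhere on $H_\rho$" is cashed out.

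The next step is the \emph{extension itself}. Take a capacitary boundary point $h\in H_\rho\setminus N$: then $h\notin \overline{U_{k_0}}^{\,\rho}$ for some $k_0$, so some $\rho$-ball $D(h,\delta)$ misses $U_{k_0}$. Any representative Cauchy sequence $\{x_m\}\subset\Omega$ for $h$ is eventually inside $D(h,\delta)$, hence eventually in $\Omega\setminus U_{k_0}$, where $u$ is \emph{uniformly} $\rho$-continuous; since $\{x_m\}$ is $\rho$-Cauchy, $\{u(x_m)\}$ is a Cauchy sequence in $\mathbb R$, so it converges, and uniform continuity on $\Omega\setminus U_{k_0}$ guarantees the limit is independent of the chosen representative sequence. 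Define $\widetilde u(h)$ to be this limit, and $\widetilde u|_\Omega=u$ (using the quasicontinuous representative of $u$, which exists by Theorem~\ref{thm:WeakLuzin} combined with the standard fact on unique quasicontinuous representatives mentioned in the introduction). One then checks that $\widetilde u$ so defined is $\rho$-uniformly continuous on $\widetilde\Omega_\rho\setminus N'$ where $N'$ is the $\rho$-closure of $N$: this is a routine $\varepsilon/3$ argument passing uniform continuity of $u$ on $\Omega\setminus U_k$ to the limit points, using that $\Omega$ is $\rho$-dense in $\widetilde\Omega_\rho$.

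Finally, to get \emph{quasicontinuity} of $\widetilde u$ on $\widetilde\Omega_\rho$ I would argue that for each $k$ the set $\overline{U_k}^{\,\rho}$ is open in $\widetilde\Omega_\rho$-relative sense — more carefully, one enlarges $U_k$ slightly to an open (in $\widetilde\Omega_\rho$) neighborhood $W_k$ of $\overline{U_k}^{\,\rho}$ of capacity still less than, say, $2\varepsilon_k$, using that capacity is an outer capacity (the inf over open supersets); then $\widetilde u$ restricted to $\widetilde\Omega_\rho\setminus W_k$ is continuous by the previous paragraph, and $\cp(W_k)<2\varepsilon_k\to 0$. This is exactly the definition of quasicontinuity of $\widetilde u:\widetilde\Omega_\rho\to\mathbb R$.

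\textbf{The main obstacle} I anticipate is the interface between the two notions of capacity: the strong Luzin property is stated with an open set $U_\varepsilon$ of small conformal capacity \emph{in $\Omega$} (an interior notion), whereas the conclusion needs smallness of a set in the completion $\widetilde\Omega_\rho$, including boundary points. Turning $\overline{U_k}^{\,\rho}\cap H_\rho$ into something with a controllable capacity — and more basically, making sense of "conformal capacity of a subset of $H_\rho$" — is the delicate point; one wants the monotonicity and outer-regularity of capacity (the $\underline{\cp}=\overline{\cp}$ machinery set up just before the statement) together with the fact that $\rho$-closure does not inflate capacity, which should follow because an admissible function for $U_k$ in the capacity problem is $\rho$-continuous on a set where it is locally constant near $F_0$, hence extends across the boundary layer. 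A second, more technical nuisance is ensuring the limit defining $\widetilde u(h)$ genuinely does not depend on the representative Cauchy sequence — this is immediate from \emph{uniform} (not merely pointwise) continuity on $\Omega\setminus U_{k_0}$, which is precisely why the "strong" version of the Luzin property, rather than the weak Theorem~\ref{thm:WeakLuzin}, is indispensable here.
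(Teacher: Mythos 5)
Your argument is essentially the paper's proof: the paper likewise takes the sets $U_{\varepsilon}$ supplied by the strong Luzin capacitary property, extends $u$ by uniform $\rho$-continuity to the completion of $\Omega\setminus U_{\varepsilon}$ inside $\widetilde{\Omega}_{\rho}$ (invoking the extension theorem for uniformly continuous functions, where you instead construct the limit explicitly along $\rho$-Cauchy representative sequences), and then lets $\varepsilon\to 0$ so that the union of these completions covers $\widetilde{\Omega}_{\rho}$ up to a set of capacity zero. The interface issue you flag about making sense of capacity for subsets of $H_{\rho}$ is settled in the paper by convention rather than by estimate --- the remark following the theorem defines ``quasieverywhere on $H_{\rho}$'' precisely through the interior sets $U_{\varepsilon}$ and the completions $\bigl(\widetilde{\Omega\setminus U_{\varepsilon}},\rho\bigr)$ --- so your additional step of enlarging to open sets $W_{k}$ in $\widetilde{\Omega}_{\rho}$ of controlled capacity is not required.
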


\begin{proof} Because $\Omega$ possesses the strong Luzin capacitary property then for every
$\varepsilon>0$ there exists an open set $U_{\varepsilon}\subset\Omega$
such that $\cp(U_{\varepsilon})<\varepsilon$ and any function $u\in L^1_2(\Omega)$
is uniformly continuous for the conformal capacitary metric on the closed (with respect to $\Omega$)
set $\Omega^{\varepsilon}=\Omega\setminus U_{\varepsilon}$. Consider
the completion $\widetilde{\Omega}^{\varepsilon}$ of the set $\Omega^{\varepsilon}$
in the complete metric space $\left(\widetilde{\Omega}_{\rho},\rho\right)$.
The function $u\in L_{p}^{1}(\Omega)$ will be uniformly continuous
on the metric space $\left(\Omega^{\varepsilon}_{\rho},\rho\right)$.
Hence by the Tietz theorem there exists an extension $\tilde{u}_{\varepsilon}$
of $u$ to $\widetilde{\Omega}^{\varepsilon}$. Put $\widetilde{\Omega}^{0}=\cup_{\varepsilon>0}\widetilde{\Omega}^{\varepsilon}$.
Then the function $u$ possesses an extension $\widetilde{u}$ to
the metric space $\left(\widetilde{\Omega}^{0},\rho\right)$ and $\cp(\widetilde{\Omega}_{\rho}\setminus\widetilde{\Omega}^{0})=0$
because $\Omega_{\varepsilon_{1}}\supset\Omega_{\varepsilon_{2}}$
if ${\varepsilon_{1}}<{\varepsilon_{2}}$. Therefore $\widetilde{u}|_{H_{\rho}}$
defined quasi-everywhere on $H_{\rho}$ and represents the boundary
value of the function $u\in L^1_2(\Omega)$ on the capacitary boundary $H_{\rho}$.
\end{proof}

\begin{rem}
The function $\tilde{u}$ defined quasi-everywhere on $H_{\rho}$
in the following sense. For any $\varepsilon>0$ there exists an open
set $U_{\varepsilon}\subset\Omega$ such that the function $u$ is
uniformly continuous on $\Omega\setminus U_{\varepsilon}$, $\cp(\Omega_{\rho}\setminus U_{\varepsilon})<\varepsilon$,
and the continuous extension of $\widetilde{u}:\Omega\setminus U_{\varepsilon}\to \mathbb R$
to its completion $\left(\widetilde{\Omega\setminus U_{\varepsilon}},\rho\right)$
coincides with $\tilde{u}$ on $H_{\rho}\cap\left(\widetilde{\Omega\setminus U_{\varepsilon}},\rho\right)$.
\end{rem}

Combining the previous Theorem and Theorem \ref{LocConSim} we obtain immediately
\begin{thm}
\label{thm:Tietz1} 
Let a domain $\Omega\subset \mathbb R^{2}$ possesses the strong Luzin capacitary property and be locally connected at any boundary point. Then for any function $u\in L_{2}^{1}(\Omega)$
there exists a quasicontinuous function $\widetilde{u}:\overline{\Omega} \to \mathbb R$
defined quasieverywhere on $\partial \Omega$ such that $\tilde{u}|_\Omega=u$. 
\end{thm}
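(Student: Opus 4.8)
The plan is to transport the boundary extension produced by Theorem~\ref{thm:Tietz} from the capacitary completion $\widetilde\Omega_\rho$ to the Euclidean closure $\overline\Omega$ by means of the canonical homeomorphism between these two compactifications. No new analytic estimate is required, only the identification of the two completions together with a check that the exceptional sets of small conformal capacity are respected.

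First I would apply Theorem~\ref{thm:Tietz}: since $\Omega$ has the strong Luzin capacitary property, for each $u\in L^1_2(\Omega)$ there is a quasicontinuous $w\colon\widetilde\Omega_\rho\to\mathbb R$ with $w|_\Omega=u$, and, as in its proof, $w$ is continuous on the $\rho$-completion $\widetilde\Omega^{\varepsilon}$ of $\Omega^{\varepsilon}=\Omega\setminus U_\varepsilon$ for suitable open $U_\varepsilon\subset\Omega$ with $\cp(U_\varepsilon;\Omega)<\varepsilon$, while $w$ is defined on $\widetilde\Omega^{0}=\bigcup_{\varepsilon>0}\widetilde\Omega^{\varepsilon}$ and $\cp\bigl(\widetilde\Omega_\rho\setminus\widetilde\Omega^{0}\bigr)=0$. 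Next, since $\Omega$ is locally connected at every boundary point, the Carath\'eodory criterion recalled before Corollary~\ref{LocConSim} shows that all realizations $s_h$, $h\in H_\rho$, are one-points; hence Theorem~\ref{LocCon} (equivalently Corollary~\ref{LocConSim}) provides a homeomorphism $\widetilde i_\rho\colon\widetilde\Omega_\rho\to\overline\Omega$ which is the identity on $\Omega$ and carries $h\in H_\rho$ to $s_h\in\partial\Omega$. I would then set $\widetilde u:=w\circ\widetilde i_\rho^{-1}\colon\overline\Omega\to\mathbb R$; since $\widetilde i_\rho^{-1}$ restricts to the identity on $\Omega$, we get $\widetilde u|_\Omega=u$.

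Finally I would check that $\widetilde u$ is quasicontinuous and defined quasieverywhere on $\partial\Omega$. The decisive point, and the only place requiring care, is that the exceptional sets behave well under $\widetilde i_\rho$. The open sets $U_\varepsilon$ of the Luzin construction lie inside $\Omega$, where $\widetilde i_\rho$ is the identity, so $\widetilde i_\rho(U_\varepsilon)=U_\varepsilon$ is again open in $\overline\Omega$ with the same conformal capacity $<\varepsilon$; since $\widetilde i_\rho$ is a homeomorphism it maps $\widetilde\Omega^{\varepsilon}$ onto $\overline{\Omega\setminus U_\varepsilon}$, so $\widetilde u$ is continuous there in the precise sense of the remark following Theorem~\ref{thm:Tietz}, and it maps the capacity-zero set $\widetilde\Omega_\rho\setminus\widetilde\Omega^{0}$ onto a set of conformal capacity zero, which is exactly the set of boundary points at which $\widetilde u$ may fail to be defined. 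Assembling these facts yields quasicontinuity of $\widetilde u$ on $\overline\Omega$ and its definition quasieverywhere on $\partial\Omega$. I do not anticipate a genuine obstacle: the substance is carried entirely by Theorems~\ref{thm:Tietz} and~\ref{LocCon}, and the present statement is, as the authors say, an immediate combination of the two, the only subtlety being the transport of the capacity data along $\widetilde i_\rho$, which is straightforward precisely because $\widetilde i_\rho$ fixes $\Omega$ pointwise.
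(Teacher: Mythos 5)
Your argument is exactly the one the paper intends: the paper offers no proof beyond the phrase ``combining Theorem~\ref{thm:Tietz} and Corollary~\ref{LocConSim} we obtain immediately,'' and your write-up simply supplies the details of that combination (transport of the extension along the homeomorphism $\widetilde i_\rho$ furnished by local connectedness, with the observation that the exceptional sets live in $\Omega$ where $\widetilde i_\rho$ is the identity). The only caveat, which you inherit from the paper itself, is that the Carath\'eodory criterion and Corollary~\ref{LocConSim} are stated for simply connected domains while Theorem~\ref{thm:Tietz1} does not explicitly assume simple connectivity.
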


The strong Luzin capacitary property is valid for the large class of domains, namely extension domains.
The class of extension domains includes domains with smooth or Lipschitz boundaries (see for example \cite{Maz}).

\begin{defn}
A domain $\Omega\subset R^{2}$ is said to be a Sobolev $L_{2}^{1}$
-extension domain if there exists a bounded linear operator $E:L_{2}^{1}(\Omega)\to L_{2}^{1}(R^{n})$
such that for any $u\in L_{2}^{1}(\Omega)$ the condition $E(u)|_{\Omega}=u$
holds. 
\end{defn}

We call the operator $E$  an extension operator. It is known that a simply connected domain $\Omega\subset\mathbb R^2$ is a $L^1_2$-extension domain if and only if $\Omega$ is a quasidisc \cite{GV1}.

Recall that a domain $\Omega\subset\mathbb R^2$ is called a quasidisc if there exists a quasiconformal homeomorphism $\varphi:\mathbb R^2\to \mathbb R^2$ such that $\Omega=\varphi(\mathbb D)$

\begin{thm}
\label{thm:HomForExtension}
If a bounded domain $\Omega\subset\mathbb R^2$ is a $L_{2}^{1}$ -extension domain
then the identity mapping $id:H_{\rho}\to\partial{\Omega}$ is a homeomorphism.
\end{thm}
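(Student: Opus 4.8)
The plan is to deduce the statement from Corollary~\ref{LocConSim}. That corollary already tells us that a simply connected domain which is locally connected at each of its boundary points has the property that the identity $i\colon\Omega\to\Omega$ extends to a homeomorphism $\tilde{i}_{\rho}\colon\tilde\Omega_\rho\to\overline\Omega$; since the restriction of this homeomorphism to $H_\rho$ is by construction the map $h\mapsto s_h$, the whole theorem reduces to showing that a bounded $L_2^1$-extension domain is locally connected at every boundary point (and, in the simply connected case that I treat, that is all that is needed).

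First I would invoke the cited characterization \cite{GV1}: a bounded simply connected $L_2^1$-extension domain $\Omega\subset\mathbb R^2$ is a quasidisc, i.e. $\Omega=\varphi(\mathbb D)$ for some quasiconformal homeomorphism $\varphi\colon\mathbb R^2\to\mathbb R^2$. Since $\varphi$ is a homeomorphism of the whole plane it restricts to a homeomorphism of the compact set $\overline{\mathbb D}$ onto $\overline\Omega$ and of the circle $S(0,1)$ onto $\partial\Omega$, a classical fact of plane quasiconformal theory (see \cite{Va}). In particular $\partial\Omega$ is a Jordan curve, hence $\Omega$ is a Jordan domain, and by C.~Carath\'eodory \cite{Cr} a Jordan domain is locally connected at every boundary point (equivalently, every prime end --- and, by Theorem~\ref{thm:CapCar}, every boundary element $h\in H_\rho$ --- has a one-point impression).

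Now Corollary~\ref{LocConSim} gives a homeomorphism $\tilde{i}_{\rho}\colon\tilde\Omega_\rho\to\overline\Omega$ extending $i\colon\Omega\to\Omega$. Because the conformal capacitary and the Euclidean topologies coincide on $\Omega$ (Theorem~\ref{thm:CoinTopol}), $\tilde{i}_{\rho}$ maps the dense set $\Omega_\rho$ onto $\Omega$, and therefore maps the complement $H_\rho=\tilde\Omega_\rho\setminus\Omega_\rho$ homeomorphically onto $\overline\Omega\setminus\Omega=\partial\Omega$; here boundedness of $\Omega$ guarantees that $\overline\Omega$ is the Euclidean closure and $\partial\Omega=\overline\Omega\setminus\Omega$. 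Since $\tilde{i}_{\rho}(h)=s_h$, the restriction $\tilde{i}_{\rho}|_{H_\rho}$ is exactly the map $id\colon H_\rho\to\partial\Omega$, so $id$ is a homeomorphism. (Alternatively, one may transport the result from the disc: $\varphi\colon\mathbb D\to\Omega$ is bi-Lipschitz for the conformal capacitary metrics by the quasi-invariance of the conformal capacity, hence extends to a homeomorphism $\tilde\varphi\colon\tilde{\mathbb D}_\rho\to\tilde\Omega_\rho$, and $\tilde{\mathbb D}_\rho=\overline{\mathbb D}$ by Corollary~\ref{LocConSim}; composing with $\varphi^{-1}$ on the closure gives the same conclusion.)

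The step I expect to require the most care in a fully rigorous write-up is the \emph{scope} of the argument, not any individual link in it. The chain ``extension domain, hence quasidisc, hence Jordan domain, hence locally connected at the boundary'' uses the simple connectivity of $\Omega$ through \cite{GV1}; for a bounded $L_2^1$-extension domain that is not simply connected one loses the quasidisc description and must instead verify directly the two hypotheses of Theorem~\ref{LocCon}, namely local connectedness at every boundary point \emph{and} one-point realizations $s_h$ of all boundary elements $h\in H_\rho$. Establishing the latter would mean extracting enough boundary regularity from the extension property itself --- for instance a capacity- or measure-density condition of the kind satisfied by Sobolev extension domains --- to rule out nontrivial impressions, and that is where the genuine difficulty lies.
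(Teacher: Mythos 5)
Your argument is internally sound but proves a strictly weaker statement than the one asserted: by routing everything through the characterization of \cite{GV1} (simply connected extension domain $\Rightarrow$ quasidisc) you use simple connectivity of $\Omega$ in an essential way, whereas Theorem~\ref{thm:HomForExtension} is stated for an \emph{arbitrary} bounded $L_2^1$-extension domain. You flag this yourself in your last paragraph, but flagging the gap does not close it. For a round annulus --- a bounded $L_2^1$-extension domain that is not simply connected --- your chain ``extension domain $\Rightarrow$ quasidisc $\Rightarrow$ Jordan domain $\Rightarrow$ locally connected at the boundary'' fails at the first arrow, and Corollary~\ref{LocConSim}, which you lean on, is itself stated only for simply connected domains. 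So as written the proposal establishes only the simply connected case (where it is essentially a repackaging of Proposition~\ref{prop:HomForExtension} later in the paper), not the theorem.

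The paper's own proof is different in kind and closes exactly the gap you identify: it never touches the topology of $\partial\Omega$ or prime ends. From the bounded extension operator $E:L_{2}^{1}(\Omega)\to L_{2}^{1}(\mathbb R^{2})$ one reads off, directly from the definition of capacity, the two-sided comparison
\[
\frac{1}{\left\Vert E\right\Vert ^{2}}\cp(F_{0},F_{1};\mathbb R^{2})\leq \cp(F_{0},F_{1};\Omega)\leq \cp(F_{0},F_{1};\mathbb R^{2})
\]
for condensers with plates in $\Omega$, hence the capacitary metric $\rho_{(F,V)}$ of $\Omega$ is equivalent on $\Omega$ to the capacitary metric $\hat{\rho}_{(F,V)}$ of the ambient plane $\mathbb R^{2}$. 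Since by Theorem~\ref{thm:CoinTopol} the metric $\hat{\rho}$ induces the Euclidean topology on all of $\mathbb R^{2}$, the completion of $\Omega$ in $\hat{\rho}$ is the Euclidean closure $\overline{\Omega}$, and the equivalence of the two metrics identifies $\widetilde{\Omega}_{\rho}$ with $\overline{\Omega}$ and $H_{\rho}$ with $\partial\Omega$. This requires no simple connectivity, no quasidisc theory, and in particular supplies, for a general bounded extension domain, precisely the ``one-point impression'' fact whose absence you correctly single out as the genuine difficulty in your approach. To repair your proof you must either add the hypothesis of simple connectivity to the statement or replace the quasidisc step by this capacity comparison.
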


\begin{proof}
Because $\Omega$ is a $L_{2}^{1}$ -extension domain then there exists
an extension operator 
$$
E:L_{2}^{1}(\Omega)=L_{2}^{1}(R^{n})
$$ 
such
that for any $u\in L_{2}^{1}(\Omega)$ we have $E(u)|_\Omega=u$. Hence
\[
\frac{1}{\left\Vert E\right\Vert }\left\Vert E(u)\right\Vert _{L_{2}^{1}(R^{n})}\leq\left\Vert u\right\Vert _{L_{2}^{1}(\Omega)}\leq\left\Vert E(u)\right\Vert _{L_{2}^{1}(R^{n})}.\]

By the definition of the conformal capacity for any condensor $(F_{0},F_{1};\Omega)$
the following inequality 
\[
\frac{1}{\left\Vert E\right\Vert ^{2}}\cp(F_{0},F_{1};R^{n})\leq \cp(F_{0},F_{1};\Omega)\leq \cp(F_{0},F_{1};R^{n})
\]
holds

So, by the definition of the conformal capacitary metric for any points $x,y\in\Omega$
and any pair $(F,V)$ from the previous inequality follows 
\[
\frac{1}{\left\Vert E\right\Vert^2 }\hat{\rho}_{(F,V)}(x,y)\leq\rho_{(F,V)}(x,y)\leq\hat{\rho}_{(F,V)}(x,y)
\]
where $\hat{\rho}_{(F,V)}(x,y)$ is the conformal capacitary metric in
$R^{2}$ and $\rho_{(F,V)}(x,y)$ is the conformal capacitary metric
in $\Omega$. It means that the metric $\rho_{(F,V)}(x,y)$ is equivalent
to the metric $\hat{\rho}_{(F,V)}(x,y)$ on $\Omega$. By Theorem~\ref{thm:CoinTopol}
the topology induced by the metric $\hat{\rho}_{(F,V)}(x,y)$
on $\mathbb R^2$ coincides with the Euclidean topology and so the topology of $H_{\rho}$ coincides with the Euclidean topology of $\partial\Omega$. Because metrics
$\rho_{(F,V)}(x,y)$ and $\hat{\rho}_{(F,V)}(x,y)$ are equivalent
on $\Omega$ the theorem proved.
\end{proof}

\begin{thm}
\label{thm:ExtLuzin}
Let $\Omega\subset\mathbb R^2$ be a bounded $L_{2}^{1}$ -extension domain. Then $\Omega$  possesses the strong Luzin capacitary property.
\end{thm}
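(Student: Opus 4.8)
The plan is to reduce the statement to the Weak Luzin Theorem~\ref{thm:WeakLuzin} applied to the \emph{extension} of $u$, and to use Theorem~\ref{thm:HomForExtension} to translate the Euclidean description of $\partial\Omega$ into the capacitary one. Fix a ball $B$ with $\overline{\Omega}\subset B$. Composing the extension operator $E\colon L_2^1(\Omega)\to L_2^1(\mathbb R^2)$ with the (norm $\le 1$) restriction to $B$ we obtain a bounded operator $E_B\colon L_2^1(\Omega)\to L_2^1(B)$ with $E_Bu|_\Omega=u$. Given $u\in L_2^1(\Omega)$, put $v=E_Bu$ and work throughout with the quasicontinuous representatives $\widetilde v$ on $B$ and $\widetilde u$ on $\Omega$; since the quasicontinuous representative at an interior point is produced from $L^1$-means over small balls, $\widetilde v|_\Omega=\widetilde u$.

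First I would record, from Theorem~\ref{thm:HomForExtension}, that the identity map extends to a homeomorphism $\widetilde\Omega_\rho\to\overline\Omega$; in particular $\widetilde\Omega_\rho$ is compact and the topology induced on it by $\rho$ coincides with the Euclidean topology of $\overline\Omega$. Now fix $\varepsilon>0$ and apply the Weak Luzin Theorem~\ref{thm:WeakLuzin} to $v\in L_2^1(B)$: there is an open set $W\subset B$ with $\cp(W;B)<\delta$ (with $\delta>0$ to be chosen) such that $\widetilde v$ is continuous on $B\setminus W$. Since $\overline\Omega$ is compact and $W$ is open, $\overline\Omega\setminus W$ is a compact subset of $B$; by the previous remark it is a compact metric space for $\rho$, on which $\widetilde v$ is $\rho$-continuous and hence \emph{uniformly} $\rho$-continuous. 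Restricting to $\Omega\setminus W$, the function $\widetilde u=\widetilde v|_\Omega$ is uniformly continuous for the conformal capacitary metric on $\Omega\setminus W$.

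It then remains to set $U_\varepsilon:=W\cap\Omega$, an open subset of $\Omega$, and to verify $\cp(U_\varepsilon;\Omega)<\varepsilon$. Here I would invoke the extension domain hypothesis once more, arguing as in the condenser estimate in the proof of Theorem~\ref{thm:HomForExtension}: the boundedness of $E_B$ yields $\cp(U_\varepsilon;\Omega)\le\|E_B\|^2\,\cp(W;B)$, so the choice $\delta=\varepsilon\|E_B\|^{-2}$ completes the argument. Since $\varepsilon>0$ was arbitrary, $\Omega$ possesses the strong Luzin capacitary property.

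The step I expect to be the main obstacle is precisely this last capacity comparison $\cp(U_\varepsilon;\Omega)\le C\,\cp(W;B)$. Monotonicity in the domain runs the \emph{wrong} way for the relative conformal capacity, whose admissible functions must vanish near $\partial\Omega$; and for the conformal exponent $p=n=2$ the capacity of a compact set in all of $\mathbb R^2$ is zero, so a bounded reference domain $B$ is genuinely unavoidable. Turning the boundedness of $E_B$ into the required comparison is where the full strength of the extension-operator hypothesis (beyond the conclusion of Theorem~\ref{thm:HomForExtension}) is used; the other ingredients — the upgrade from continuity to uniform continuity on a compact metric space, and the identification of the $\rho$-topology with the Euclidean one on $\overline\Omega$ — are soft.
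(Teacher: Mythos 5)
Your overall route is the paper's: extend $u$, apply the weak Luzin theorem to the extension, get uniform continuity from compactness of $\overline{\Omega}$ minus the exceptional set, and transfer to the capacitary metric via Theorem~\ref{thm:HomForExtension}. Two of your refinements are genuine improvements on the printed proof: working in a bounded ball $B\supset\overline{\Omega}$ rather than in $\mathbb R^2$ (where the conformal capacity of every compact set relative to the whole plane vanishes, so the weak Luzin statement degenerates), and upgrading to uniform $\rho$-continuity via compactness of $\bigl(\overline{\Omega}\setminus W,\rho\bigr)$ rather than via Euclidean uniform continuity plus metric equivalence.

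The step you yourself flagged as the crux is, however, not justified as written. The inequality $\cp(U_\varepsilon;\Omega)\le\|E_B\|^2\,\cp(W;B)$ would require turning a function $w\in C_0(B)$ with $w\ge1$ on a compact $e\subset W\cap\Omega$ into an admissible function for $\cp(e;\Omega)$ --- i.e., one vanishing near $\partial\Omega$ --- with comparable energy. The extension operator runs in the opposite direction: it carries functions on $\Omega$ to functions on $\mathbb R^2$, and in the condenser estimate of Theorem~\ref{thm:HomForExtension} it is used precisely to prove $\cp(F_0,F_1;\mathbb R^2)\le\|E\|^2\cp(F_0,F_1;\Omega)$; the reverse bound there is the easy restriction inequality, available only because both plates are compact in $\Omega$. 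For the set capacity the ``easy'' direction is $\cp(e;B)\le\cp(e;\Omega)$ (extend admissible functions by zero), which is the reverse of what you need, and neither $E_B$ nor its norm produces elements of $C_0(\Omega)$. A comparison $\cp(e;\Omega)\le C\,\cp(e;B)$ does hold for quasidiscs, but it comes from a quasiconformal reflection across $\partial\Omega$, not from $E_B$. The paper sidesteps the issue by measuring the capacity of the exceptional set in the ambient space, so that only monotonicity in the set, $\cp(W\cap\Omega)\le\cp(W)<\varepsilon$, is invoked --- and that weaker normalization is all that Theorem~\ref{thm:Tietz} actually uses. If you adopt that convention your argument closes; if you insist on the $\Omega$-relative capacity you must supply the reflection argument in place of the appeal to $\|E_B\|$.
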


\begin{proof}
Choose arbitrarily a function $u\in L_{2}^{1}$. Because $\Omega$
is an extension domain there exists an extension $\widehat{u}\in L_{2}^{1}(R^{2})$
of $u$. By the Theorem~\ref{thm:WeakLuzin} for any $\varepsilon>0$
there exist such open set $U_{\varepsilon}\in R^{2}$ of conformal capacity less then
$\varepsilon$ such that the function $\widehat{u}$ is continuous on $R^{2}\setminus U_{\varepsilon}$.
Because the domain $\Omega$ is bounded the function $\hat{u}|_{\bar{\Omega}\setminus U_{\varepsilon}}$
is uniformly continuous. Hence the function $u$ is uniformly continuous
on $\Omega\setminus U_{\varepsilon}$ . By monotonicity of conformal capacity $\cp(U_{\varepsilon}\cap\Omega)<\cp(U_{\varepsilon})<\varepsilon$.
By the previous Theorem~\ref{thm:HomForExtension} the function $u$ is uniformly continuous
for $p$-capacitary metric in $\Omega\setminus U_{\varepsilon}$ also.
\end{proof}

Combining Theorem \ref{thm:Tietz}, Theorem \ref{thm:ExtLuzin} and \ref{thm:HomForExtension} we obtain
\begin{thm}
\label{thm:TietzExtension} 
Let a domain $\Omega\subset \mathbb R^{2}$ be a bounded $L_{2}^{1}$ -extension domain. 
Then for any function $u\in L_{2}^{1}(\Omega)$
there exists a quasicontinuous function $\widetilde{u}:\overline{\Omega} \to \mathbb R$
defined quasieverywhere on $\partial \Omega$ such that $\tilde{u}|_\Omega=u$. 
\end{thm}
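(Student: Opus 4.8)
The plan is to combine the three named ingredients exactly as the statement suggests, so the proof is essentially a bookkeeping argument with no new content. First I would observe that by Theorem~\ref{thm:ExtLuzin}, every bounded $L_{2}^{1}$-extension domain $\Omega$ possesses the strong Luzin capacitary property; this is the substantive input, and nothing further about extension operators is needed afterward. Second, I would apply Theorem~\ref{thm:Tietz} to obtain, for every $u\in L_{2}^{1}(\Omega)$, a quasicontinuous function $\widetilde{u}:\widetilde{\Omega}_{\rho}\to\mathbb{R}$, defined quasieverywhere on the capacitary boundary $H_{\rho}$, with $\widetilde{u}|_{\Omega}=u$.

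The remaining step is to transfer this from $\widetilde{\Omega}_{\rho}$ to the Euclidean closure $\overline{\Omega}$. For this I would invoke Theorem~\ref{thm:HomForExtension}: since $\Omega$ is a bounded $L_{2}^{1}$-extension domain, the identity map extends to a homeomorphism $\widetilde{i_\rho}:\widetilde{\Omega}_{\rho}\to\overline{\Omega}$ which restricts to a homeomorphism $H_{\rho}\to\partial\Omega$. Composing, I would set $\widehat{u} = \widetilde{u}\circ\widetilde{i_\rho}^{-1}:\overline{\Omega}\to\mathbb{R}$. On $\Omega$ the map $\widetilde{i_\rho}^{-1}$ is the identity, so $\widehat{u}|_{\Omega}=u$. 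The quasieverywhere-defined and quasicontinuity statements survive because the homeomorphism $\widetilde{i_\rho}$ is, by the proof of Theorem~\ref{thm:HomForExtension}, induced by an equivalence of metrics (the capacitary metric on $\Omega$ is comparable to the ambient capacitary metric of $\mathbb{R}^{2}$, whose topology on $\mathbb{R}^{2}$ is Euclidean), hence it carries sets of conformal capacity zero to sets of conformal capacity zero and preserves (uniform) continuity on the complements of small-capacity open sets; this is precisely the content unpacked in the Remark following Theorem~\ref{thm:Tietz}.

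The only point requiring any care — and the step I expect to be the main obstacle — is making the capacity-zero bookkeeping match up under the identification $H_{\rho}\cong\partial\Omega$: one must be sure that the ``exceptional'' set on $H_{\rho}$ produced by Theorem~\ref{thm:Tietz} corresponds under $\widetilde{i_\rho}$ to a set of conformal capacity zero in $\partial\Omega$, and that conformal capacity of boundary subsets is computed consistently in the two pictures. This follows from the two-sided comparison of $\rho_{(F,V)}$ with $\widehat{\rho}_{(F,V)}$ established inside the proof of Theorem~\ref{thm:HomForExtension}, together with the comparison $\tfrac{1}{\|E\|^{2}}\cp(\cdot;\mathbb{R}^{2})\le\cp(\cdot;\Omega)\le\cp(\cdot;\mathbb{R}^{2})$ for condensers, which shows a set has zero conformal capacity relative to $\Omega$ iff it does relative to $\mathbb{R}^{2}$. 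Once this equivalence is recorded, $\widehat{u}$ is the desired quasicontinuous extension and the proof is complete.
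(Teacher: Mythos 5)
Your proposal is correct and follows exactly the route the paper takes: the paper derives Theorem~\ref{thm:TietzExtension} in one line by ``combining Theorem~\ref{thm:Tietz}, Theorem~\ref{thm:ExtLuzin} and Theorem~\ref{thm:HomForExtension}'', which is precisely your three-step argument. Your additional care about transporting the capacity-zero exceptional set under the identification $H_{\rho}\cong\partial\Omega$ is a detail the paper leaves implicit, and you justify it correctly via the two-sided capacity comparison from the proof of Theorem~\ref{thm:HomForExtension}.
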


 Theorem \ref{thm:Tietz}, Theorem \ref{thm:ExtLuzin} and \ref{thm:HomForExtension} can be easily extended to a more flexible class of so-called quasi-extension domains:

\begin{defn}
A domain $\Omega\subset \mathbb R^{2}$ is said to be a Sobolev $L_{2}^{1}$
-quasi-extension domain if for any $\varepsilon>0$ there exist such
open set $U_{\varepsilon}$ of conformal capacity less then $\varepsilon$ that $\Omega\setminus\bar{U_{\varepsilon}}$
is a $L_{2}^{1}$ -extension domain. 
\end{defn}

Typical examples of such domains are domains with boundary singularities
of conformal capacity zero. 

\begin{thm}
If a bounded domain $\Omega$ is a $L_{2}^{1}$ quasi-extension domain
then the identity mapping $id:\partial{\Omega}\to H_{\rho}$ is a homeomorphism.
\end{thm}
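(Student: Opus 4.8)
The plan is to reproduce the argument of Theorem~\ref{thm:HomForExtension} along an exhaustion of $\Omega$ by $L^{1}_{2}$-extension domains and then pass to the limit as in the proof of Theorem~\ref{thm:Tietz}. By the quasi-extension property I fix, for every $n\in\mathbb N$, an open set $U_{n}\subset\Omega$ with $\cp(U_{n};\Omega)<2^{-n}$ such that $\Omega_{n}:=\Omega\setminus\overline{U_{n}}$ is a bounded $L^{1}_{2}$-extension domain. Set $Z:=\bigcap_{n}\overline{U_{n}}$; by monotonicity of capacity $\cp(Z;\Omega)\le\cp(U_{n};\Omega)<2^{-n}$ for every $n$, so $\cp(Z;\Omega)=0$, while $\bigcup_{n}\Omega_{n}=\Omega\setminus Z$ and $\partial\Omega\setminus Z\subset\bigcup_{n}\partial\Omega_{n}$ (a point $z\in\partial\Omega$ with $z\notin\overline{U_{n}}$ has a neighbourhood meeting $\Omega$ only in $\Omega_{n}$, hence lies in $\partial\Omega_{n}$).

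Next, on each piece I run the capacity comparison of Theorem~\ref{thm:HomForExtension}. Let $E_{n}\colon L^{1}_{2}(\Omega_{n})\to L^{1}_{2}(\mathbb R^{2})$ be an extension operator. Since $\Omega_{n}\subset\Omega\subset\mathbb R^{2}$, every condenser $(A,B)$ in $\Omega_{n}$ satisfies $\cp(A,B;\Omega_{n})\le\cp(A,B;\Omega)\le\cp(A,B;\mathbb R^{2})$, while $E_{n}$ gives the reverse bound $\cp(A,B;\mathbb R^{2})\le\|E_{n}\|^{2}\,\cp(A,B;\Omega_{n})$; hence $\cp(\,\cdot\,;\Omega)$ and $\cp(\,\cdot\,;\Omega_{n})$ are comparable on $\Omega_{n}$ with constant $\|E_{n}\|^{2}$. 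Using Theorem~\ref{thm:QuasiInvMet} to dispense with a particular choice of the pair $(F,V)$, this makes the capacitary metric $\rho$ of $\Omega$, restricted to $\Omega_{n}$, bi-Lipschitz equivalent to a capacitary metric of $\Omega_{n}$, which by Theorem~\ref{thm:HomForExtension} extends $\mathrm{id}$ to a homeomorphism $\overline{\Omega_{n}}\to\widetilde{(\Omega_{n})}_{\rho}$. In particular a $\rho$-Cauchy sequence that eventually stays in a fixed $\Omega_{n}$ converges in the Euclidean metric to a point of $\overline{\Omega_{n}}$, and a Euclidean-convergent sequence in $\Omega_{n}$ is $\rho$-Cauchy. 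Note that the equivalence constant $\|E_{n}\|^{2}$ is allowed to blow up with $n$, which is precisely why a one-shot comparison on all of $\Omega$ is unavailable and an exhaustion is needed.

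Finally I glue and pass to the limit. Inside $\widetilde{\Omega}_{\rho}$ let $\widehat{\Omega}_{n}$ be the closure of $\Omega_{n}$; the homeomorphisms above all restrict to $\mathrm{id}_{\Omega}$, hence agree on overlaps and glue to a homeomorphism of $\bigcup_{n}\widehat{\Omega}_{n}$ onto $\bigcup_{n}\overline{\Omega_{n}}$ taking $\big(\bigcup_{n}\widehat{\Omega}_{n}\big)\cap H_{\rho}$ onto $\partial\Omega\setminus Z$. Since $\Omega=\bigcup_{n}\Omega_{n}$ up to the capacity-zero set $Z$, the residual sets $\widetilde{\Omega}_{\rho}\setminus\bigcup_{n}\widehat{\Omega}_{n}$ and $\overline{\Omega}\setminus\bigcup_{n}\overline{\Omega_{n}}$ also have conformal capacity zero, and the remaining task is to carry the homeomorphism across them. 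I would argue this as for an isolated boundary continuum of capacity zero: using Lemma~\ref{lem:CapContin} together with boundedness of $\Omega$ (so that $\overline{\Omega}$ is compact, upgrading subsequential convergence to convergence), a $\rho$-Cauchy sequence whose impression meets $Z$ must converge in the Euclidean metric to a single point of $Z\cap\partial\Omega$, and conversely any sequence converging to such a point is $\rho$-Cauchy, since a short joining curve near that point has arbitrarily small capacity relative to $F$ and stays outside $V$.

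I expect this last step — proving that the capacity-zero set $Z$ cannot "open up" into several distinct capacitary boundary elements, and conversely that distinct directions of approach to a point of $Z$ are not separated by $\rho$ — to be the only genuine obstacle; everything before it is a routine transcription of the proofs of Theorems~\ref{thm:HomForExtension}, \ref{thm:CoinTopol} and~\ref{thm:Tietz}.
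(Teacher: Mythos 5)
Your route is the same one the paper intends: the paper's entire proof of this theorem is the single sentence ``Follows directly from Theorem~\ref{thm:HomForExtension} and the countable subadditivity of capacity'', i.e.\ precisely the exhaustion-by-extension-domains scheme you set up, so your first three paragraphs are a faithful (and far more detailed) transcription of what the authors leave implicit. Two remarks on that part. First, $\cp(Z;\Omega)\le\cp(\overline{U_n};\Omega)$, not $\cp(U_n;\Omega)$, so you should either read the definition of quasi-extension domain as controlling the capacity of the closed set or enlarge $U_n$ slightly. Second, the identification of $\rho_\Omega|_{\Omega_n}$ with a capacitary metric of $\Omega_n$ is not purely a matter of comparing condenser capacities: the admissible curves for $\rho_\Omega$ may dip into $U_n$, and one plate of the second condenser is $\partial\Omega$ rather than $\partial\Omega_n$, so the two infima are taken over different families. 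The paper glosses this too, but it deserves a sentence.

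The genuine gap is the one you flag yourself and then leave as ``I would argue this as\dots'': transporting the homeomorphism across the residual capacity-zero set $Z$, i.e.\ showing that a point of $Z\cap\partial\Omega$ carries exactly one capacitary boundary element. This is not a formality --- it is the entire mathematical content of the theorem beyond Theorem~\ref{thm:HomForExtension}, it is exactly where well-definedness of ``$id$'' lives, and neither your proposal nor the paper proves it. The input that makes it work is a structural fact about null sets of the conformal ($2$-)capacity in the plane: such sets have Hausdorff dimension zero, hence are totally disconnected and cannot locally separate $\Omega$. This is what rules out the failure mode you should worry about: if $Z$ could contain a nondegenerate continuum (a slit), its tip would carry at least two prime ends and the map would not be well defined; total disconnectedness, combined with the fact that away from $Z$ the domain locally coincides with an extension domain (so arbitrarily short joining arcs in $\Omega$ exist near $z$, whose capacity relative to $F$ then tends to zero as in the asymptotic estimates of Section~2), gives that all Euclidean approaches to $z\in Z\cap\partial\Omega$ are $\rho$-equivalent; the converse direction (a $\rho$-Cauchy sequence clustering on $Z$ has one-point impression) needs a boundary version of Lemma~\ref{lem:CapContin}, since that lemma as stated only applies to balls compactly contained in $\Omega$. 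You have correctly located the crux; to have a proof you must actually carry out (or cite) this argument rather than record it as an expectation.
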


\begin{proof}
Follows directly from Theorem~\ref{thm:HomForExtension} and the countable subadditivity
of capacity.
\end{proof}

\begin{thm}
Let $\Omega\subset\mathbb R^2$ be a bounded $L_{2}^{1}$ quasi-extension domain. Then $\Omega$  possesses the strong Luzin capacitary property.
\end{thm}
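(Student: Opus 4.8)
The plan is to reduce the claim for $L_2^1$ quasi-extension domains to the already-established case of $L_2^1$-extension domains (Theorem~\ref{thm:ExtLuzin}) by combining the defining property of quasi-extension domains with the countable subadditivity of the conformal capacity, exactly as in the proof of the preceding theorem on the homeomorphism $id:\partial\Omega\to H_\rho$. First I would fix a function $u\in L_2^1(\Omega)$ and a number $\varepsilon>0$. Since $\Omega$ is a quasi-extension domain, for the value $\varepsilon/2$ there is an open set $U_{\varepsilon/2}$ with $\cp(U_{\varepsilon/2};\Omega)<\varepsilon/2$ such that $\Omega_1:=\Omega\setminus\overline{U_{\varepsilon/2}}$ is an $L_2^1$-extension domain. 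The restriction $u|_{\Omega_1}$ lies in $L_2^1(\Omega_1)$, so by Theorem~\ref{thm:ExtLuzin} applied to $\Omega_1$ there is an open set $W\subset\Omega_1$ with $\cp(W;\Omega_1)<\varepsilon/2$ on whose complement $\Omega_1\setminus W$ the function $u$ is uniformly continuous with respect to the conformal capacitary metric of $\Omega_1$.

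The next step is to transfer this uniform continuity from the metric of $\Omega_1$ to the metric of $\Omega$, and to assemble the exceptional set. Set $U_\varepsilon:=U_{\varepsilon/2}\cup W$; this is open in $\Omega$, and by countable (in fact finite) subadditivity together with monotonicity of conformal capacity one gets $\cp(U_\varepsilon;\Omega)\le \cp(U_{\varepsilon/2};\Omega)+\cp(W;\Omega)<\varepsilon$, using $\cp(W;\Omega)\le\cp(W;\Omega_1)$ since enlarging the ambient domain does not increase capacity of a fixed compact exhaustion. On the complement $\Omega\setminus U_\varepsilon=\Omega_1\setminus W$ one then has to compare the two capacitary metrics. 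Here I would invoke the monotonicity of the conformal capacity of condensers under enlargement of the domain, giving $\cp(F_0,F_1;\Omega_1)\ge\cp(F_0,F_1;\Omega)$ for disjoint compacta in $\Omega_1$, which yields $\rho^{\Omega}_{(F,V)}(x,y)\le\rho^{\Omega_1}_{(F,V)}(x,y)$ for the curves that stay inside $\Omega_1$; hence uniform continuity of $u$ on $\Omega_1\setminus W$ in the $\Omega_1$-metric implies uniform continuity in the $\Omega$-metric on the same set. This establishes the strong Luzin capacitary property for $\Omega$.

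I expect the main obstacle to be the metric comparison step: one must be careful that admissible curves for $\rho^{\Omega}$ joining two points of $\Omega_1\setminus W$ can be taken inside $\Omega_1$ (this is harmless after passing to small $\varepsilon$-disks as in Definition of $\rho_{(F,V)}$, since the distinguished continuum $F$ and the compact $V$ may be fixed once and for all inside $\Omega_1$), and that the one-sided inequality goes in the direction that preserves uniform continuity. A secondary, purely bookkeeping point is that Theorem~\ref{thm:ExtLuzin} requires a \emph{bounded} extension domain, so one should note that $\Omega$, and hence $\Omega_1$, is bounded by hypothesis. With these points in place the argument is short, paralleling the proof of the homeomorphism statement that immediately precedes it, and the theorem follows.
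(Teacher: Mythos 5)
Your overall strategy is the same as the paper's (whose proof is literally the one line ``follows from the extension--domain case and countable subadditivity''): split off the bad set $U_{\varepsilon/2}$ from the definition of a quasi-extension domain, apply Theorem~\ref{thm:ExtLuzin} on $\Omega_1=\Omega\setminus\overline{U_{\varepsilon/2}}$, and add the two exceptional sets. The bookkeeping for the exceptional set is fine. The problem is the metric-comparison step, which as written contains two errors that do not cancel. First, the monotonicity of the two-plate condenser capacity in the ambient domain is reversed: for $F_0,F_1\subset\Omega_1\subset\Omega$ one has $\cp(F_0,F_1;\Omega_1)\le\cp(F_0,F_1;\Omega)$, not $\ge$ --- restricting an admissible function from $\Omega$ to $\Omega_1$ only decreases the Dirichlet energy (equivalently, the smaller domain carries fewer connecting curves, hence smaller modulus). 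This is exactly the inequality $\cp(F_{0},F_{1};\Omega)\leq \cp(F_{0},F_{1};\mathbb R^{n})$ that the paper itself uses in the proof of Theorem~\ref{thm:HomForExtension}; you have likely confused it with the one-plate capacity $\cp(K;\Omega)$, which is monotone the other way because there the second plate is the complement of the domain. Second, even granting your inequality $\rho^{\Omega}\le\rho^{\Omega_1}$, the deduction fails logically: uniform continuity with respect to the \emph{larger} metric $\rho^{\Omega_1}$ does not imply uniform continuity with respect to the \emph{smaller} metric $\rho^{\Omega}$. To transfer uniform continuity you need the opposite estimate, namely that $\rho^{\Omega}(x,y)$ small forces $\rho^{\Omega_1}(x,y)$ small, i.e.\ something like $\rho^{\Omega_1}\le C\,\rho^{\Omega}$ on $\Omega_1\setminus W$.

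That lower bound is the genuine content of the theorem and it is not free: two points of $\Omega_1\setminus W$ could a priori be joined by an $\Omega$-admissible curve of small capacity that passes through the removed set $\overline{U_{\varepsilon/2}}$, while every curve confined to $\Omega_1$ has large capacity. One has to use, at this point, the smallness of $\cp(U_{\varepsilon/2};\Omega)$ (together with the equivalence of $\rho^{\Omega_1}$ with the ambient metric $\hat\rho$ of $\mathbb R^2$ furnished by the extension operator on $\Omega_1$, as in Theorem~\ref{thm:HomForExtension}) to show that such shortcuts can be excised or rerouted at negligible capacitary cost --- possibly after enlarging the exceptional set once more by a set of small capacity. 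Your proposal names this as ``the main obstacle'' but then disposes of it with the two reversed inequalities above, so the argument as written does not close. To be fair, the paper's own proof is a one-line citation that does not address this point either; but if you want a self-contained argument, the curve-rerouting estimate is the step you must actually supply.
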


\begin{proof}
Follows directly from Theorem~\ref{thm:HomForExtension} and the countable subadditivity
of capacity.
\end{proof}

\begin{thm}
Let a domain $\Omega\subset \mathbb R^{2}$ be a bounded $L_{2}^{1}$ quasi-extension domain. 
Then for any function $u\in L_{2}^{1}(\Omega)$
there exists a quasicontinuous function $\widetilde{u}:\overline{\Omega} \to \mathbb R$
defined quasieverywhere on $\partial \Omega$ such that $\tilde{u}|_\Omega=u$. 
\end{thm}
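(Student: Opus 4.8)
The plan is to obtain the statement by stacking the three facts established immediately above, exactly as Theorem~\ref{thm:TietzExtension} was obtained in the extension-domain case. Since $\Omega$ is a bounded $L_{2}^{1}$ quasi-extension domain, the preceding theorems give us (i) that $\Omega$ possesses the strong Luzin capacitary property, and (ii) that the identity mapping $\partial\Omega\to H_{\rho}$ is a homeomorphism. From (i) and the abstract extension Theorem~\ref{thm:Tietz}, every $u\in L_{2}^{1}(\Omega)$ admits a quasicontinuous extension $\widetilde{u}:\widetilde{\Omega}_{\rho}\to\mathbb R$ with $\widetilde{u}|_{\Omega}=u$, defined quasieverywhere on $H_{\rho}$. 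Composing with the homeomorphism of (ii) then yields the desired function $\widetilde{u}:\overline{\Omega}\to\mathbb R$.

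The only point that needs care is reconciling the two notions of ``defined quasieverywhere'', one relative to $H_{\rho}$ and one relative to $\partial\Omega$; this is where the quasi-extension hypothesis does its work. Given $\varepsilon>0$, choose the open set $U_{\varepsilon}$ with $\cp(U_{\varepsilon})<\varepsilon$ such that $\Omega_{\varepsilon}:=\Omega\setminus\overline{U_{\varepsilon}}$ is an $L_{2}^{1}$-extension domain. By the argument of Theorem~\ref{thm:HomForExtension}, on $\Omega_{\varepsilon}$ the intrinsic capacitary metric and the ambient one are equivalent and both induce the Euclidean topology, so the capacitary completion of $\Omega_{\varepsilon}$ is identified, bi-Lipschitzly at the level of the metrics, with $\overline{\Omega_{\varepsilon}}$; since conformal capacity is a conformal (hence bi-Lipschitz quasi-)invariant, this identification carries sets of conformal capacity zero to sets of conformal capacity zero. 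As $\varepsilon\to 0$ along a sequence, the domains $\Omega_{\varepsilon}$ exhaust $\Omega$ and, by countable subadditivity of capacity, their closures cover $\overline{\Omega}$ up to a set of conformal capacity zero. On each piece $\overline{\Omega_{\varepsilon}}$ the restriction of $\widetilde{u}$ is the continuous extension provided by the uniform continuity of $u$ for the capacitary metric on $\Omega_{\varepsilon}$ (equivalently, for the Euclidean metric there); these extensions agree on overlaps and glue to a quasicontinuous $\widetilde{u}:\overline{\Omega}\to\mathbb R$, defined quasieverywhere on $\partial\Omega$, with $\widetilde{u}|_{\Omega}=u$, absorbing into the exceptional set the residual portion of $\partial\Omega$ not reached by any $\overline{\Omega_{\varepsilon}}$.

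The main obstacle I expect is precisely this bookkeeping: verifying that the homeomorphism $\partial\Omega\to H_{\rho}$ preserves the ideal of capacity-zero sets, so that ``quasieverywhere on $H_{\rho}$'' translates faithfully into ``quasieverywhere on $\partial\Omega$''. Everything else is a routine gluing argument of the kind already carried out in the proof of Theorem~\ref{thm:Tietz}, and the structure of the proof otherwise parallels that of Theorem~\ref{thm:Tietz1} and Theorem~\ref{thm:TietzExtension}, with the quasi-extension hypothesis replacing both the extension hypothesis and the global local-connectedness hypothesis used there.
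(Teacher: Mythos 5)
Your proposal is correct and takes essentially the same route as the paper: the paper states this theorem without a separate proof, presenting it as the immediate combination of the strong Luzin capacitary property for quasi-extension domains, the abstract extension Theorem~\ref{thm:Tietz}, and the homeomorphism $id:\partial\Omega\to H_{\rho}$, exactly paralleling the derivation of Theorem~\ref{thm:TietzExtension} in the extension-domain case. Your extra bookkeeping on translating ``quasieverywhere on $H_{\rho}$'' into ``quasieverywhere on $\partial\Omega$'' supplies more detail than the paper does, but it is the same argument.
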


\section{Boundary Values of Sobolev Functions for Simply Connected Domains}

Using the Riemann Mapping Theorem we prove that any simply connected domain possess the strong Luzin capacitary property, that permits to extend main results to any simply connected domain.

The unit disk $\mathbb D(0,1)\subset\mathbb R^2$ is the $L^1_2$-extension domain and possess the strong Luzin capacitary property.  Remember that the conformal capacity of condensors is a quasi-invariant for quasiconformal homeomorphisms $\varphi:\Omega \to \Omega'$ between two plane domains $\Omega$ and $\Omega'$. Hence the conformal capacitary metric is also a quasi-invariant for quasiconformal homeomorphisms. Moreover, from this remark immediately follows

\begin{prop} (\cite{GV}
\label{QuasiInv}
Any quasiconformal homeomorphism $\varphi:\Omega \to \Omega'$ between two plane domains $\Omega$ and $\Omega'$ induces a quasi-isometry of $\widetilde{\Omega}_{\rho}$ and $\widetilde{\Omega'}_{\rho}$.
\end{prop}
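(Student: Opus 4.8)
The plan is to show that $\varphi$ itself, regarded as a map between the metric spaces $(\Omega,\rho_{(F,V)})$ and $(\Omega',\rho_{(\varphi(F),\varphi(V))})$, is bi-Lipschitz, and then to pass to completions; the independence of the conformal capacitary metric on the chosen defining pair (Theorem~\ref{thm:QuasiInvMet}) removes any dependence on the particular pair used in $\Omega'$.

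First I would check that $(\varphi(F),\varphi(V))$ is an admissible pair for a conformal capacitary metric on $\Omega'$: since $\varphi\colon\Omega\to\Omega'$ is a homeomorphism, $\varphi(F)$ is again a continuum and $\overline{\varphi(V)}=\varphi(\overline{V})$ is a compact subdomain of $\Omega'$; moreover $\varphi$ is quasiconformal in a neighbourhood of the quasicircle $\partial V$, so its image $\varphi(\partial V)$ satisfies the three-point condition and is therefore itself a quasicircle, hence the image of $S(0,1)$ under a suitable quasiconformal self-homeomorphism of $\mathbb{R}^2$.

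Next, for fixed $x,y\in\Omega$ I would take an admissible curve $l(x,y)$ and push it forward: $\varphi(l(x,y))$ joins $\varphi(x)$ and $\varphi(y)$ in $\Omega'$, and since $\varphi$ is injective $\varphi(l(x,y))\setminus\varphi(V)=\varphi(l(x,y)\setminus V)$ and $\varphi(l(x,y))\cap\varphi(V)=\varphi(l(x,y)\cap V)$. By the quasi-invariance of the conformal capacity under $\varphi$ (with a constant $K_0$ depending only on the dilatation of $\varphi$) each of the two capacitary terms defining $\rho_{(\varphi(F),\varphi(V))}(\varphi(x),\varphi(y))$ is bounded by $K_0$ times the corresponding term for $\rho_{(F,V)}(x,y)$; taking square roots, adding, and passing to the infimum over $l(x,y)$ gives $\rho_{(\varphi(F),\varphi(V))}(\varphi(x),\varphi(y))\le K_0^{1/2}\,\rho_{(F,V)}(x,y)$. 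Applying the same argument to $\varphi^{-1}$, which is quasiconformal with the same dilatation bound, yields the reverse inequality with the same constant, so $\varphi$ is a bi-Lipschitz bijection of $(\Omega,\rho_{(F,V)})$ onto $(\Omega',\rho_{(\varphi(F),\varphi(V))})$. By Theorem~\ref{thm:QuasiInvMet} the target metric is equivalent to $\rho_{(F',V')}$ for any admissible pair in $\Omega'$, and a bi-Lipschitz bijection between metric spaces extends uniquely to a bi-Lipschitz homeomorphism of their completions; this extension is the desired quasi-isometry $\widetilde\varphi\colon\widetilde{\Omega}_{\rho}\to\widetilde{\Omega'}_{\rho}$ in the sense of Theorem~\ref{thm:IsometryMetric}.

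The one delicate point, which I expect to be the main obstacle, is hidden in the curve transfer: a quasiconformal image of a rectifiable curve need not be rectifiable, while the definition of $\rho$ admits only rectifiable competitors. I would handle this by observing that both capacitary terms depend on $l(x,y)$ only through its underlying compact connected set, so the infimum defining $\rho_{(F,V)}$ is unchanged if one admits arbitrary continua joining $x$ and $y$; and given such a continuum $\Gamma$, a polygonal path from $x$ to $y$ inside a sufficiently thin open neighbourhood of $\Gamma$ in $\Omega$ has capacitary sum arbitrarily close to that of $\Gamma$, by continuity of the conformal capacity under monotone exhaustion of the plates $\Gamma\setminus V$ and $\Gamma\cap V$. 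Hence $\varphi(l(x,y))$ may, up to an arbitrarily small error, be used as a competitor in $\Omega'$, and the estimates above survive the passage to the infimum.
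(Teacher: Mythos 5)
Your argument is correct and follows exactly the route the paper intends: the paper offers no formal proof of this proposition, merely remarking that quasi-invariance of the conformal capacity under quasiconformal homeomorphisms makes the capacitary metric quasi-invariant, whence the quasi-isometry of completions ``immediately follows.'' Your write-up supplies the details the paper omits --- admissibility of the pushed-forward pair $(\varphi(F),\varphi(V))$, the reduction via Theorem~\ref{thm:QuasiInvMet}, and in particular the rectifiability of image curves, a genuine subtlety the paper glosses over and which you resolve correctly by approximating continua by polygonal competitors.
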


\begin{cor}
Let $\varphi: \mathbb D \to \Omega$ be a quasiconformal homeomorphism of the unit disc $\mathbb D$ onto a domain $\Omega \in R^2$. Then $\Omega$ possesses the strong Luzin capacitary property.
\end{cor}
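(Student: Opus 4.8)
The plan is to transfer the strong Luzin capacitary property from the unit disc $\mathbb D$ to $\Omega$ along the quasiconformal homeomorphism $\varphi:\mathbb D\to\Omega$. The disc is a bounded $L^1_2$-extension domain, so by Theorem~\ref{thm:ExtLuzin} it possesses the strong Luzin capacitary property. I would fix an arbitrary $u\in L^1_2(\Omega)$ and pull it back: the composition operator $\varphi^*u = u\circ\varphi$ maps $L^1_2(\Omega)$ boundedly into $L^1_2(\mathbb D)$ because the conformal capacity of condensers is a quasi-invariant of quasiconformal homeomorphisms (equivalently, $\varphi^*$ is bounded on Dirichlet integrals up to the factor $K$). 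Thus $v=u\circ\varphi\in L^1_2(\mathbb D)$, and since $\mathbb D$ has the strong Luzin property, for every $\varepsilon>0$ there is an open set $W_\varepsilon\subset\mathbb D$ with $\cp(W_\varepsilon;\mathbb D)<\varepsilon$ such that $v|_{\mathbb D\setminus W_\varepsilon}$ is uniformly continuous for the conformal capacitary metric $\rho$ on $\mathbb D$.

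Next I would push this forward. Set $U_\varepsilon=\varphi(W_\varepsilon)\subset\Omega$, an open set. The quasi-invariance of the conformal capacity (monotonicity and the distortion factor $K$) gives $\cp(U_\varepsilon;\Omega)\le K\,\cp(W_\varepsilon;\mathbb D)<K\varepsilon$, which is enough after relabelling $\varepsilon$. Moreover, by Proposition~\ref{QuasiInv} the map $\varphi$ extends to a quasi-isometry $\widetilde\varphi:(\widetilde{\mathbb D}_\rho,\rho)\to(\widetilde\Omega_\rho,\rho)$; in particular $\varphi:(\mathbb D,\rho)\to(\Omega,\rho)$ is bi-Lipschitz for the respective conformal capacitary metrics. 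Since $v|_{\mathbb D\setminus W_\varepsilon}$ is uniformly continuous in $\rho$ and $u = v\circ\varphi^{-1}$ on $\Omega\setminus U_\varepsilon$ with $\varphi^{-1}$ Lipschitz from $(\Omega\setminus U_\varepsilon,\rho)$ onto $(\mathbb D\setminus W_\varepsilon,\rho)$, the composition $u|_{\Omega\setminus U_\varepsilon}$ is uniformly continuous for the conformal capacitary metric on $\Omega$. That is exactly the strong Luzin capacitary property for $\Omega$.

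The main obstacle, and the step I would write out carefully, is the legitimacy of the pull-back: that $u\in L^1_2(\Omega)$ implies $u\circ\varphi\in L^1_2(\mathbb D)$, i.e. that quasiconformal homeomorphisms act as bounded composition operators on $L^1_2$ in the plane. This rests on the standard fact that planar quasiconformal maps belong to $W^{1,2}_{\loc}$, satisfy the change-of-variables formula, and the Dirichlet integral in dimension two is distorted by at most the constant $K$; it is the two-dimensional reflection of the conformal (quasi-invariance) character of $L^1_2$ already invoked for capacities in the excerpt. Once that is in hand, the bookkeeping with the constant $K$ and the relabelling of $\varepsilon$ is routine, and the conclusion follows. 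I would remark additionally that combining this corollary with Theorem~\ref{thm:Tietz} yields the quasicontinuous extension $\widetilde u:\widetilde\Omega_\rho\to\mathbb R$ for every simply connected $\Omega\ne\mathbb R^2$ (using the Riemann map when $\varphi$ is in fact conformal), which is the second main theorem of the paper.
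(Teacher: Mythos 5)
Your proposal is correct and follows essentially the same route as the paper: pull $u$ back to $v=u\circ\varphi\in L^1_2(\mathbb D)$ via the boundedness of the composition operator, apply the strong Luzin property of the disc (an $L^1_2$-extension domain), then push the exceptional set forward using quasi-invariance of the conformal capacity and transfer uniform continuity via the quasi-isometry of the capacitary metrics from Proposition~\ref{QuasiInv}. The only difference is cosmetic: you spell out the composition-operator step that the paper simply cites to the Gol'dshtein--Reshetnyak monograph.
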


\begin{proof}
Choose a function $u\in L^1_2(\Omega)$. Because $\varphi:\mathbb D \to \Omega$ is a quasiconformal homeomorphism, then the composition $v:=u \circ \varphi$ belongs to $L^1_2(D)$ (see, for example \cite{GR}). Because $\mathbb D$ possesses the Luzin capacitary property, then for any $\varepsilon >0$ there exist an open set $V_{\varepsilon}$ of the conformal capacity less then $\varepsilon$ such that $v|_{D \setminus V_{\varepsilon}}$ is uniformly continuous. The conformal capacity is a quasiinvariant for a quasiconformal homeomorphism $\varphi$. It means that there exist a constant $Q$ which depends only on the quasiconformal distortion of $\varphi$ and such that the conformal capacity of $U_{\varepsilon}:=\varphi(V_{\varepsilon})$ is less then $Q \varepsilon$. By the previous proposition $\varphi^{-1}$ induces a quasiisometry of $\widetilde{\Omega}_{\rho}$ and $\widetilde{D}_{\rho}$. Therefore $u=v \circ \varphi^{-1}$ is uniformly continuous on $G \setminus U_{\varepsilon}$. We proved that $\Omega$ possesses the strong Luzin capacitary property
 \end{proof}

From the previous proposition and Theorem \ref{thm:Tietz} immediately follows 
 
\begin{thm} 
\label{MainTh}
Let $\Omega\subset\mathbb R^2$ be a simply connected domain, $\Omega\ne\mathbb R^2$. Then for any function $u\in L_{2}^{1}(\Omega)$
there exists a quasicontinuous function $\widetilde{u}:\widetilde{\Omega}_{\rho}\to \mathbb R$
defined quasi-everywhere on the capacitary boundary $H_{\rho}$ such that $\tilde{u}|_{\Omega}=u$. 
\end{thm}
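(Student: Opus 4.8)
The plan is to reduce the statement to the already-treated case of the unit disc by means of the Riemann Mapping Theorem and then quote Theorem~\ref{thm:Tietz}. Since $\Omega\subset\mathbb R^2$ is simply connected and $\Omega\ne\mathbb R^2$, the Riemann Mapping Theorem supplies a conformal homeomorphism $\varphi:\mathbb D\to\Omega$. A conformal homeomorphism is in particular $1$-quasiconformal, so the Corollary immediately preceding this theorem (a quasiconformal image of $\mathbb D$ possesses the strong Luzin capacitary property) applies verbatim and shows that $\Omega$ possesses the strong Luzin capacitary property. With that property in hand, Theorem~\ref{thm:Tietz} delivers at once a quasicontinuous function $\widetilde u:\widetilde\Omega_\rho\to\mathbb R$ with $\widetilde u|_\Omega=u$, defined quasi-everywhere on $H_\rho$, which is precisely the assertion.

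For completeness I would also recall, in a line or two, how the strong Luzin capacitary property for $\Omega$ is obtained from that of $\mathbb D$: given $u\in L^1_2(\Omega)$, the composition $v=u\circ\varphi$ lies in $L^1_2(\mathbb D)$ because a (quasi)conformal homeomorphism induces a bounded composition operator between these Dirichlet spaces; since the conformal capacity is a conformal invariant, an open set $V_\varepsilon\subset\mathbb D$ of capacity $<\varepsilon$ realizing the weak Luzin property for $v$ is carried by $\varphi$ to an open set $U_\varepsilon=\varphi(V_\varepsilon)\subset\Omega$ of capacity $<\varepsilon$; and by Proposition~\ref{QuasiInv} the map $\varphi^{-1}$ is a quasi-isometry of the capacitary completions, so uniform continuity of $v|_{\mathbb D\setminus V_\varepsilon}$ for the capacitary metric transfers to uniform continuity of $u|_{\Omega\setminus U_\varepsilon}$ for the capacitary metric. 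One then runs the Tietze-extension-and-exhaustion argument of the proof of Theorem~\ref{thm:Tietz} on the nested family $\Omega\setminus U_\varepsilon$.

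In this chain there is no genuine obstacle left, since every nontrivial step has been isolated in an earlier statement; the only point that deserves emphasis is the passage of uniform continuity \emph{with respect to the capacitary metric} (not the Euclidean one) from $\mathbb D$ to $\Omega$, which is where conformal invariance of the capacity and the quasi-isometry of completions from Proposition~\ref{QuasiInv} are indispensable — the Euclidean geometry of $\partial\Omega$ may be arbitrarily wild while the capacitary geometry stays controlled. I would also note that no boundedness hypothesis on $\Omega$ is needed here, because the whole argument is conducted through the conformal capacity and the capacitary metric, which are insensitive to the Euclidean size of $\Omega$; this is exactly why the simply connected case goes through in full generality whereas the extension-domain route of Section~4 required boundedness.
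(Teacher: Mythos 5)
Your proposal is correct and follows exactly the paper's route: Riemann Mapping Theorem to get a conformal (hence $1$-quasiconformal) map $\varphi:\mathbb D\to\Omega$, the preceding Corollary to transfer the strong Luzin capacitary property from $\mathbb D$ to $\Omega$ (via conformal quasi-invariance of the capacity and the quasi-isometry of completions from Proposition~\ref{QuasiInv}), and then Theorem~\ref{thm:Tietz} to produce the quasicontinuous extension. The paper states this as an immediate consequence of those two results, and your elaboration of the transfer of uniform continuity in the capacitary metric matches the paper's own proof of the Corollary.
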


And its version for simply connected domains locally connected at any boundary point

\begin{thm} 
\label{MainTh1}
Let $\Omega\subset\mathbb R^2$, $\Omega\ne\mathbb R^2$ be a simply connected domain locally connected at any boundary point. Then for any function $u\in L_{2}^{1}(\Omega)$
there exists a quasicontinuous function $\widetilde{u}:\overline{\Omega} \to \mathbb R$
defined quasi-everywhere on the boundary $\partial \Omega$ such that $\tilde{u}|_{\Omega}=u$. 
\end{thm}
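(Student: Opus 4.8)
The plan is to deduce Theorem \ref{MainTh1} by combining the two structural pieces that have already been assembled: the extension-to-the-capacitary-boundary statement (Theorem \ref{MainTh}, equivalently Theorem \ref{thm:Tietz} together with the Corollary asserting that every simply connected $\Omega\ne\mathbb R^2$ possesses the strong Luzin capacitary property), and the topological identification of the capacitary completion with the Euclidean closure under local connectedness (Corollary \ref{LocConSim}, which is the simply connected specialization of Theorem \ref{LocCon}). The idea is that Theorem \ref{MainTh} already produces a quasicontinuous $\widetilde u$ on $\widetilde\Omega_\rho$; when $\Omega$ is locally connected at every boundary point, $\widetilde\Omega_\rho$ is homeomorphic to $\overline\Omega$, so transporting $\widetilde u$ along this homeomorphism yields the desired function on $\overline\Omega$, and "quasieverywhere on $H_\rho$" translates to "quasieverywhere on $\partial\Omega$" because the homeomorphism carries $H_\rho$ onto $\partial\Omega$ and, being bi-Lipschitz-type equivalent to the identity on $\Omega$ for the relevant metrics, preserves conformal capacity zero sets.

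Concretely, first I would invoke the Corollary following Proposition \ref{QuasiInv}: since $\Omega\ne\mathbb R^2$ is simply connected, the Riemann map (or, to cover the unbounded case, a Möbius-composed conformal map) identifies $\Omega$ conformally with $\mathbb D$, and since $\mathbb D$ has the strong Luzin capacitary property and conformal maps are in particular quasiconformal homeomorphisms inducing quasi-isometries of the capacitary completions, $\Omega$ inherits the strong Luzin capacitary property. Then Theorem \ref{thm:Tietz} gives a quasicontinuous $\widetilde u_\rho:\widetilde\Omega_\rho\to\mathbb R$ with $\widetilde u_\rho|_\Omega=u$ and $\widetilde u_\rho$ defined quasieverywhere on $H_\rho$ in the precise sense recorded in the Remark after that theorem. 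Next, because $\Omega$ is locally connected at every boundary point, Corollary \ref{LocConSim} furnishes a homeomorphism $\widetilde i_\rho:\widetilde\Omega_\rho\to\overline\Omega$ extending the identity on $\Omega$; I would set $\widetilde u:=\widetilde u_\rho\circ(\widetilde i_\rho)^{-1}:\overline\Omega\to\mathbb R$, which satisfies $\widetilde u|_\Omega=u$ automatically since $\widetilde i_\rho$ is the identity on $\Omega$.

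It remains to check two things. First, quasicontinuity of $\widetilde u$ as a function on $\overline\Omega$: for $\varepsilon>0$ pick the open set $U_\varepsilon\subset\Omega$ from the strong Luzin property with $\cp(U_\varepsilon)<\varepsilon$ on which $u$ is uniformly continuous for $\rho$; since $\widetilde i_\rho$ is a homeomorphism restricting to the identity on $\Omega$, the image $\widetilde i_\rho(\widetilde U_\varepsilon)$ of a capacitary neighborhood is an open subset of $\overline\Omega$ of small capacity outside of which $\widetilde u$ is continuous in the Euclidean topology — here one uses that the two topologies agree (Theorem \ref{thm:CoinTopol}) and that $\widetilde i_\rho$ transports the $\rho$-continuity of $\widetilde u_\rho$ to Euclidean continuity of $\widetilde u$. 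Second, "defined quasieverywhere on $\partial\Omega$": the exceptional set on $H_\rho$ has conformal capacity zero, and its image under $\widetilde i_\rho$ lies in $\partial\Omega$; one needs that $\widetilde i_\rho$ maps capacity-zero subsets of $H_\rho$ to capacity-zero subsets of $\partial\Omega$. This is the step I expect to require the most care — capacity is not a priori obviously preserved across the completion boundary — but it follows from the metric equivalence $\tfrac1{\|E\|^2}\widehat\rho\le\rho\le\widehat\rho$-type comparison available through the conformal (hence quasiconformal) chart to $\mathbb D$ and the quasi-invariance of conformal capacity under quasiconformal maps, exactly as used in the proofs of Theorems \ref{thm:HomForExtension} and \ref{thm:ExtLuzin}. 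Alternatively, and more cleanly, one can simply quote Theorem \ref{thm:Tietz1}, which already packages "strong Luzin capacitary property $+$ locally connected at every boundary point $\Rightarrow$ quasicontinuous extension to $\overline\Omega$ defined quasieverywhere on $\partial\Omega$": the only remaining content of Theorem \ref{MainTh1} is then the verification, via the Riemann Mapping Theorem and the Corollary to Proposition \ref{QuasiInv}, that a simply connected $\Omega\ne\mathbb R^2$ does possess the strong Luzin capacitary property, which is immediate. I would present the proof in this second, shorter form, noting that it is the common specialization of Theorem \ref{MainTh} and Theorem \ref{thm:Tietz1}.
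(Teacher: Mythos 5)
Your proposal is correct and follows essentially the same route as the paper: the paper obtains Theorem \ref{MainTh1} by combining the strong Luzin capacitary property of simply connected domains (via the Riemann map and the corollary to Proposition \ref{QuasiInv}) with Theorem \ref{thm:Tietz1} (equivalently, Theorem \ref{thm:Tietz} plus Corollary \ref{LocConSim}). Your extra care about transporting capacity-zero exceptional sets and $\rho$-continuity through $\tilde i_\rho$ is a point the paper leaves implicit, but it is the same argument.
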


For readers convenience we repeat some basic facts about quasidiscs.

\begin{defn}
A domain $\Omega$ is called a $K$-quasidisc if it is an image of the
unit disc $\mathbb{D}$ of a $K$-quasiconformal homeomorphism of
the plane onto itself.
\end{defn}

 It is well known that the boundary of any $K$-quasidisc $\Omega$
admits a $K^{2}$-quasiconformal reflections and thus, for example,
any conformal homeomorphism $\varphi:\mathbb{D}\to\Omega$ can be
extended to a $K^{2}$-quasiconformal homeomorphism of the hole plane
to itself.

Boundaries of quasidisc are called quasicircles. It is known that there are quasicircles for which no segment has finite length. The Hausdorff dimension of quasicircles was first investigated by Gehring and Vaisala (1973) \cite{GV73}, who proved that it can take all values in the interval $[1,2)$. S. Smirnov proved recently \cite{Smi10} that the Hausdorff dimension of
any $K$-quasicircle is at most $1+k^2$, where $k = (K-1)/(K +1)$.

Ahlfors's 3-point condition \cite{Ahl63} gives
a complete geometric characterization: a Jordan curve $\gamma$ in the plane is a quasicircle
if and only if for each two points $a, b$ on $\gamma$ the (smaller) arc between them has
diameter comparable to $|a-b|$. This condition is easily checked for the snowflake.
On the other hand, every quasicircle can be obtained by an explicit snowflake-type
construction (see \cite{Roh01}). 

Because any quasidisc is an $L_2^1$-extension domain we can reformulate previous results in the terms of quasidiscs.

\begin{prop}
\label{prop:HomForExtension}
Let a domain $\Omega\subset\mathbb R^2$ is a quasidisc. Then the identity mapping $id:H_{\rho}\to\partial{\Omega}$ is a homeomorphism.
\end{prop}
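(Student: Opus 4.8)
The plan is to deduce Proposition~\ref{prop:HomForExtension} directly from Theorem~\ref{thm:HomForExtension}, using only the fact that any quasidisc is a bounded $L^1_2$-extension domain. First I would recall that a $K$-quasidisc $\Omega$ is by definition the image $\varphi(\mathbb D)$ of the unit disc under a $K$-quasiconformal homeomorphism $\varphi\colon\mathbb R^2\to\mathbb R^2$; since such a homeomorphism is a homeomorphism of the whole plane, $\Omega$ is a bounded Jordan domain (we may assume $\varphi$ is normalised so that $\varphi(\mathbb D)$ is bounded, or compose with a M\"obius transformation to arrange this), hence $\partial\Omega$ is a Jordan curve. Then I would invoke the standard result, cited earlier in the paper after the definition of an extension operator, that a simply connected plane domain is an $L^1_2$-extension domain if and only if it is a quasidisc \cite{GV1}; in particular the quasidisc $\Omega$ is a bounded $L^1_2$-extension domain.

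With that established, the proof is a one-line application of Theorem~\ref{thm:HomForExtension}: that theorem asserts precisely that for a bounded $L^1_2$-extension domain the identity mapping $\mathrm{id}\colon H_\rho\to\partial\Omega$ is a homeomorphism. So the statement follows immediately. I would write this out as: ``Since $\Omega$ is a quasidisc, it is a bounded $L^1_2$-extension domain (see the remark preceding Theorem~\ref{thm:HomForExtension} and \cite{GV1}). Applying Theorem~\ref{thm:HomForExtension} we conclude that $\mathrm{id}\colon H_\rho\to\partial\Omega$ is a homeomorphism.''

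There is essentially no obstacle here, since all the analytic content — the equivalence of the capacitary metric on $\Omega$ with the restriction of the capacitary metric of $\mathbb R^2$, and the coincidence of the latter's topology with the Euclidean one — has already been carried out in Theorem~\ref{thm:HomForExtension} and Theorem~\ref{thm:CoinTopol}. The only point deserving a word of care is the boundedness hypothesis: one must note that a quasidisc obtained from $\mathbb D$ via a plane quasiconformal map can always be taken bounded (equivalently, one restricts attention to the ``bounded component'' side, or post-composes with an inversion), so that Theorem~\ref{thm:HomForExtension} applies verbatim. If one wished to avoid even that remark, one could instead cite Corollary~\ref{LocConSim} together with the fact that a Jordan domain is locally connected at every boundary point, but the extension-domain route is the shortest and matches the paper's stated intent of ``reformulating previous results in the terms of quasidiscs.''
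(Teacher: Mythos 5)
Your proposal is correct and follows exactly the route the paper intends: the proposition is stated there as an immediate reformulation of Theorem~\ref{thm:HomForExtension}, using the cited fact that a simply connected plane domain is an $L^1_2$-extension domain if and only if it is a quasidisc. Your side remark about boundedness is harmless but unnecessary, since with the paper's definition (image of $\mathbb D$ under a quasiconformal homeomorphism of the plane onto itself) the set $\varphi(\overline{\mathbb D})$ is compact and hence the quasidisc is automatically bounded.
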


\begin{prop}
\label{prop:ExtLuzin}
Let a domain $\Omega\subset\mathbb R^2$ is a quasidisc. Then $\Omega$  possesses the strong Luzin capacitary property.
\end{prop}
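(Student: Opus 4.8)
The plan is to reduce the statement to results already in the paper, exploiting the fact that a quasidisc carries two relevant structures simultaneously: by definition it is a quasiconformal image of the unit disc, and (as recalled in the text via \cite{GV1}) every quasidisc is an $L_2^1$-extension domain. Either one gives the conclusion, and I would present the first, self-contained route as the main argument and mention the second as an alternative.

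First I would record that a $K$-quasidisc $\Omega$ is $\varphi(\mathbb D)$ for some $K$-quasiconformal homeomorphism $\varphi\colon\mathbb R^2\to\mathbb R^2$; since $\varphi$ carries the compact set $\overline{\mathbb D}$ onto a compact set, $\Omega$ is bounded. The restriction $\varphi|_{\mathbb D}\colon\mathbb D\to\Omega$ is again a $K$-quasiconformal homeomorphism onto its image, because the distortion inequality in the definition of quasiconformality is pointwise and $\mathbb D$ is open. Then I would invoke the Corollary immediately following Proposition~\ref{QuasiInv}, whose hypothesis is precisely that $\Omega$ is a quasiconformal image of $\mathbb D$: given $u\in L^1_2(\Omega)$, the pullback $v=u\circ\varphi|_{\mathbb D}$ lies in $L^1_2(\mathbb D)$, the strong Luzin capacitary property of the disc produces for each $\varepsilon>0$ an open set $V_\varepsilon$ of conformal capacity $<\varepsilon$ outside which $v$ is uniformly continuous for the capacitary metric of $\mathbb D$, and the quasi-invariance of the conformal capacity together with the quasi-isometry of $\widetilde{\mathbb D}_\rho$ and $\widetilde\Omega_\rho$ furnished by Proposition~\ref{QuasiInv} transports this back: $u$ is uniformly continuous for the capacitary metric of $\Omega$ outside $U_\varepsilon=\varphi(V_\varepsilon)$, and $\cp(U_\varepsilon)\le Q\varepsilon$ with $Q$ depending only on $K$. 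Rescaling $\varepsilon$ finishes the proof.

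As an alternative one-line argument I would note that, $\Omega$ being a bounded $L_2^1$-extension domain, Theorem~\ref{thm:ExtLuzin} applies directly. I expect essentially no obstacle here: every ingredient is already established, and the only subtlety worth a sentence is the boundedness of $\Omega$ (needed so that Theorems~\ref{thm:ExtLuzin} and~\ref{thm:HomForExtension} apply) and the fact that restricting a plane quasiconformal map to $\mathbb D$ stays quasiconformal onto its image. I would close by remarking, for emphasis, that combining this with Theorem~\ref{thm:HomForExtension} (equivalently Proposition~\ref{prop:HomForExtension}) shows $H_\rho$ is homeomorphic to $\partial\Omega$, so for quasidiscs the extension in Theorem~\ref{thm:Tietz} is an extension to the Euclidean closure $\overline{\Omega}$.
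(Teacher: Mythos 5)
Your proposal is correct and follows the same route the paper intends: the paper states this proposition without a separate proof, justifying it by the remark that every quasidisc is a bounded $L_2^1$-extension domain (so Theorem~\ref{thm:ExtLuzin} applies), and your primary argument via the Corollary to Proposition~\ref{QuasiInv} is exactly the other reduction already established in the text. Both of your routes are sound, and your added remarks on boundedness and on restricting the global quasiconformal map to $\mathbb D$ are correct but routine.
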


\section {Historical Sketch and Conclusions}

The concept of the ideal boundaries is common for the geometry and the analysis.
The Poincare disc is a model of the hyperbolic plane that provides a geometrical
realization of the ideal boundary of the hyperbolic plane with help
of a conformal homeomorphism. 

By the Riemann Mapping Theorem any simply connected plane domain  $\Omega\ne\mathbb R^2$ is conformally equivalent to the unit disc. However the boundary
behavior of plane conformal homeomorphisms can not be described in
terms of Euclidean boundaries but it can be described  in terms of  ideal boundary
elements (prime ends) that  was introduced by C.~Caratheodory. By the Caratheodory Theorem  any conformal homeomorphism $\varphi: \mathbb{D}\to \Omega$  induces one to one correspondence of prime ends. 

M.~A.~Lavrentiev \cite{Lv} introduced a metric (a relative distance)
for prime ends. G.~D.~Suvorov \cite{Su} constructed a counterexample that demonstrates
an absence of the triangle inequality for the Lavrentiev relative
distance and proposed a more accurate concept of relative distance that support the triangle inequality. In terms of this metric the Cartheodory prime ends are a geometric representation of "ideal" compactification "boundary points". There exists a number of different
conformally invariant intrinsic metrics. A detailed survey can be
found in the paper of V.~M.~Miklyukov \cite{Mi}.

For dimension more than two by the Liuoville theorem the class of conformal homeomorphisms
coincides with the Mobius transformations. Even for quasiconformal homeomorphisms nothing similar to the Riemann mapping theorem is not correct.

By our opinion two main constructions of  a quasiconformally invariant
{}``ideal'' boundary were proposed. The first one was in the spirit of Banach algebras. 
Recall that Royden algebra $\mathbb{R}(\Omega)$ is a  quasiconformal invariant by M.~Nakai \cite{Na} for dimension two and by L.~G.~Lewis \cite{Le} for arbitrary dimension. As any
Banach algebra the Royden algebra produces a compactification of $\Omega$
and any quasiconformal homeomorphisms induces a homeomorphism
of such compactifications.

The second one is so-called capacitary boundary proposed by
V.~Gol'd\-shtein and S.~K.~Vodop'janov \cite{GV}. Its construction is based on a notion of the conformal capacity. Remember that the conformal capacity is a quasiinvariant
of quasiconformal homeomorphisms. By \cite{GV} quasiconformal homeomorphism
can be extended to a homeomorphism of domains with capacitary boundaries.

The Royden compactification does not coincide with the Caratheodory
compactification. The "ideal" elements  of the capacitary boundary  are  Caratheodory
prime ends. 

Necessary and sufficient condition for existence of continuous traces of  $L^1_P(\Omega)$, $p>2$ were obtained by Shvartsman \cite{Sh} in terms of quasi-hyperbolic metrics.

\end{document}